\DeclareRobustCommand{\NS}{%
  \textsection\@ifnextchar\NS{}{}%
}
\DeclareSymbolFont{cyrlebTers}{OT2}{wncyr}{m}{n}
\DeclareMathSymbol{\Sha}{\mathalpha}{cyrlebTers}{"58}
\newcommand{\bA}{{\mathbb{A}}}
\newcommand{\bF}{{\mathbb{F}}}
\newcommand{\bN}{{\mathbb{N}}}
\newcommand{\bP}{{\mathbb{P}}}
\newcommand{\bQ}{{\mathbb{Q}}}
\newcommand{\bR}{{\mathbb{R}}}
\newcommand{\bT}{{\mathbb{T}}}
\newcommand{\bZ}{{\mathbb{Z}}}
\newcommand{\C}{{\mathcal{C}}}
\newcommand{\D}{{\mathcal{D}}}
\newcommand{\R}{{\mathcal{R}}}
\newcommand{\GL}{\operatorname{GL}}
\newcommand{\upchi}{{\raise.35ex\hbox{$\chi$}}}
\newtheorem{theorem}{Theorem}[section]
\newtheorem{corollary}[theorem]{Corollary}
\newtheorem{proposition}[theorem]{Proposition}
\newtheorem{lemma}[theorem]{Lemma}
\theoremstyle{definition}
\newtheorem{question}[theorem]{Question}
\newtheorem{remark}[theorem]{Remark}
\numberwithin{equation}{section}
\begin{document}
	
	\title[]{Quartic polynomials in two variables do not represent \\ all non-negative integers }
	
	\author{Stanley Yao Xiao}
	\address{Department of Mathematics and Statistics \\
		University of Northern British Columbia \\
		3333 University Way \\
		Prince George, British Columbia, Canada \\  V2N 4Z9}
	\email{StanleyYao.Xiao@unbc.ca}
	\indent
	
\author{Shuntaro Yamagishi}
\address{IST Austria, Am Campus 1, 3400 Klosterneuburg, Austria}
\email{shuntaro.yamagishi@ist.ac.at}
\indent

	
\begin{abstract}
In this paper, we prove that there does not exist $F \in \bQ[x,y]$ of degree $4$ such that $F(\mathbb{Z}^2) = \mathbb{Z}_{\geq 0}$.
In particular, this answers a question by John S. Lew and Bjorn Poonen for quartic polynomials.
\end{abstract}
	
	\maketitle
	
	\section{Introduction}
Representation of integers by polynomials is a fundamental problem in number theory that goes back
at least to the time of Gau{\ss} and Lagrange. Regarded as one of the most important theorems in additive number theory,
Lagrange proved that every non-negative integer is a sum of four squares. Given a polynomial $F$ in $n$ variables, let us define the set
$$
F(\bZ^n) = \{ F(x_1, \ldots, x_n):  (x_1, \ldots, x_n) \in \bZ^n   \}.
$$
Then we may reformulate Lagrange's theorem as follows.
\begin{theorem}[Lagrange, 1770]
Let $\mathcal{L}(w, x, y, z) = w^2 + x^2 + y^2 + z^2$. Then
$$
\mathcal{L}(\bZ^4) = \bZ_{\geq 0}.
$$
\end{theorem}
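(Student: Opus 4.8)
The plan is to give the classical proof, due in essence to Euler, with the concluding descent step supplied by Lagrange. The inclusion $\mathcal{L}(\bZ^4) \subseteq \bZ_{\ge 0}$ is immediate since a sum of squares is non-negative, so the whole content is the reverse inclusion $\bZ_{\ge 0} \subseteq \mathcal{L}(\bZ^4)$. I would assemble this from three ingredients, in order.

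First I would record Euler's four-square identity,
\[
(a_1^2+a_2^2+a_3^2+a_4^2)(b_1^2+b_2^2+b_3^2+b_4^2) = c_1^2+c_2^2+c_3^2+c_4^2,
\]
in which $c_1,\dots,c_4$ are explicit bilinear forms in the $a_i$ and $b_i$ (they are the coordinates of a product of two quaternions). This shows $\mathcal{L}(\bZ^4)$ is closed under multiplication; since $0=0^2+0^2+0^2+0^2$, $1=1^2+0^2+0^2+0^2$ and $2=1^2+1^2+0^2+0^2$ lie in $\mathcal{L}(\bZ^4)$ and every positive integer is a product of primes, it then suffices to prove that every odd prime $p$ lies in $\mathcal{L}(\bZ^4)$.

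Second, fixing an odd prime $p$, I would use a pigeonhole count: the two subsets $\{\,a^2 \bmod p : 0\le a\le (p-1)/2\,\}$ and $\{\,-1-b^2\bmod p : 0\le b\le (p-1)/2\,\}$ of $\bZ/p\bZ$ each have $(p+1)/2$ elements, hence intersect, so there are integers $a,b$ with $a^2+b^2+1\equiv 0 \pmod p$. This gives an integer $m$ with $1\le m<p$ and $mp = a^2+b^2+1^2+0^2 \in \mathcal{L}(\bZ^4)$.

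Third --- and this is the step I expect to carry the real difficulty --- I would run Fermat's method of infinite descent on $m$. Let $m\ge 1$ be least with $mp = x_1^2+x_2^2+x_3^2+x_4^2$ for some integers $x_i$, and suppose $m>1$. Choosing $y_i\equiv x_i\pmod m$ with $|y_i|\le m/2$ yields $y_1^2+\cdots+y_4^2 = mr$ with $0\le r\le m$; the boundary cases $r=0$ and $r=m$ each force $m\mid p$, hence $m=1$, a contradiction. For $0<r<m$, Euler's identity applied to $(mp)(mr) = (x_1^2+\cdots+x_4^2)(y_1^2+\cdots+y_4^2)$ produces $c_1,\dots,c_4$ with $c_1^2+\cdots+c_4^2 = m^2 rp$; the congruences $y_i\equiv x_i\pmod m$ make each $c_i$ divisible by $m$, so dividing through gives $rp$ as a sum of four squares with $0<r<m$, contradicting minimality. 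Hence $m=1$ and $p\in\mathcal{L}(\bZ^4)$. The delicate part of all this is precisely the descent: the choice of the residues $y_i$, the disposal of the degenerate cases $r\in\{0,m\}$, and the term-by-term verification that $m\mid c_i$. (One could instead bypass the descent by geometry of numbers, applying Minkowski's convex body theorem to the index-$p^2$ lattice $\{x\in\bZ^4 : x_3\equiv ax_1+bx_2,\ x_4\equiv bx_1-ax_2 \pmod p\}$ together with a ball of radius $\sqrt{2p}$, which directly produces a nonzero lattice vector whose squared length is a positive multiple of $p$ smaller than $2p$, hence equal to $p$.)
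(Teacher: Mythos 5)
The paper states Lagrange's theorem only as classical background in the introduction and supplies no proof of its own, so there is nothing to compare against; your argument is the standard Euler--Lagrange proof and it is correct. The three ingredients are assembled in the right order: Euler's identity reduces the problem to odd primes (with $0,1,2$ handled directly), the pigeonhole count on $\{a^2\}$ and $\{-1-b^2\}$ modulo $p$ gives $mp$ a sum of four squares with $1\le m<p$ (the bound $m<p$ follows from $a^2+b^2+1\le 2((p-1)/2)^2+1<p^2$, which you should state), and the descent with $|y_i|\le m/2$, the degenerate cases $r\in\{0,m\}$ forcing $m\mid p$, and the divisibility $m\mid c_i$ is exactly the delicate but entirely standard part you flag. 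Your parenthetical Minkowski alternative is also correct: the lattice you describe has index $p^2$, every vector in it has squared length divisible by $p$, and the ball of radius $\sqrt{2p}$ has volume $2\pi^2p^2>16p^2$, so a nonzero vector of squared length exactly $p$ exists.
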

The problem of representing integers becomes more challenging with fewer variables. Considered one of the classical results in number theory, Gau{\ss} proved that every non-negative integer is a sum of three triangular numbers.
\begin{theorem}[Gau{\ss}, 1796]
Let $\mathcal{G}(x, y, z) = \frac{x(x+1)}{2} + \frac{y(y+1)}{2} + \frac{ z (z+1) }{2}$. Then
$$
\mathcal{G}(\bZ^3) = \bZ_{\geq 0}.
$$
\end{theorem}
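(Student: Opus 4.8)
The plan is to reduce Gauß's assertion — the \emph{Eureka theorem} — to the classical Gauß--Legendre three-square theorem by an elementary change of variables. Starting from the identity $8\cdot\frac{t(t+1)}{2} = (2t+1)^2 - 1$, valid for every $t \in \bZ$, summing over the three variables gives
\[
8\,\mathcal{G}(x,y,z) + 3 = (2x+1)^2 + (2y+1)^2 + (2z+1)^2 .
\]
Thus a non-negative integer $n$ lies in $\mathcal{G}(\bZ^3)$ if and only if $8n+3$ is a sum of three \emph{odd} squares. The first step is to record this equivalence carefully, observing that $t \mapsto 2t+1$ is a bijection from $\bZ$ onto the odd integers, so the oddness constraint is exactly what is needed to recover integral $x,y,z$ from a representation of $8n+3$.

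The second step is to invoke the three-square theorem: a positive integer $m$ is a sum of three integer squares unless $m = 4^a(8b+7)$ for some $a,b \geq 0$. Since $8n+3 \equiv 3 \pmod 8$, it is never of the excluded shape, so $8n+3 = u^2 + v^2 + w^2$ for some $u,v,w \in \bZ$. I would either cite this result or, to keep the account self-contained, sketch its proof through the reduction theory of ternary quadratic forms (or, alternatively, via Minkowski's convex body theorem combined with Dirichlet's theorem on primes in arithmetic progressions).

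The third step is a congruence argument forcing $u,v,w$ all odd. Squares are congruent to $0$, $1$, or $4$ modulo $8$, and the only way three such residues can sum to $3$ modulo $8$ is $1+1+1$; hence each of $u,v,w$ is odd. Writing $u = 2x+1$, $v = 2y+1$, $w = 2z+1$ and dividing the displayed identity by $8$ exhibits $n = \mathcal{G}(x,y,z)$. Conversely $\mathcal{G}$ visibly takes only non-negative integer values on $\bZ^3$, so $\mathcal{G}(\bZ^3) = \bZ_{\geq 0}$.

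The main obstacle is, unsurprisingly, the three-square theorem: the direction excluding $4^a(8b+7)$ is a routine $2$-adic congruence check, but the existence of a representation in the remaining cases is genuinely deep and is the one place where substantive input is required. Everything else — the algebraic identity and the bookkeeping modulo $8$ — is elementary.
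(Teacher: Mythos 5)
Your argument is correct: the identity $8\cdot\frac{t(t+1)}{2}=(2t+1)^2-1$ gives $8\,\mathcal{G}(x,y,z)+3=(2x+1)^2+(2y+1)^2+(2z+1)^2$, the Gau{\ss}--Legendre three-square theorem applies since $8n+3\equiv 3 \pmod 8$ is never of the form $4^a(8b+7)$, and the residue computation modulo $8$ (squares lie in $\{0,1,4\}$, and $1+1+1$ is the only way to reach $3$) forces all three squares to be odd, so the substitution $u=2x+1$, etc., recovers integer $x,y,z$. Be aware, though, that the paper does not prove this statement at all: it is quoted as a classical result of Gau{\ss}, subsumed under Cauchy's polygonal number theorem with a pointer to Nathanson's short proof \cite{Na1}, and it plays no role in the paper's actual arguments, which concern the nonexistence side for two-variable quartics. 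So there is no proof in the paper to compare against; what you have written is the standard textbook reduction, and its only substantive input is exactly where you say it is, namely the existence half of the three-square theorem, which you would need to cite (or prove via reduction theory of ternary forms, or via Minkowski plus Dirichlet, as you indicate) rather than treat as elementary.
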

These results by Lagrange and Gau{\ss} are special cases of what is known as Cauchy's polygonal number theorem, for which we
refer the reader to \cite{Na1} for a summary. More generally, these results are all within
an important area in number theory called additive bases, which includes problems such as
Goldbach's conjecture and Waring's problem. We refer to \cite{Na2} for a comprehensive introduction to this topic.
One consequence of these results is the following: Given $n > 2$, there exists a rational polynomial $F$ in $n$ variables
such that
$$
F(\bZ^n) = \bZ_{\geq 0}.
$$
It is clear that such $F$ does not exist when $n = 1$.
Therefore, it is natural to ask what happens when $n = 2$, which is precisely the question of John S. Lew and Bjorn Poonen.
\begin{question}[Lew-Poonen]
\label{QPoon}
Does there exist $F \in \bQ[x,y]$ such that
$$
F(\bZ^2) = \bZ_{\geq 0} \, ?
$$
\end{question}
This question was asked by Bjorn Poonen on the popular website MathOverflow\footnote{\url{https://mathoverflow.net/questions/9731/polynomial-representing-all-nonnegative-integers}}
in 2009 and has generated a great deal of interest from many mathematicians active on the website. Despite this, there has been no progress (at least that we are aware of).
In fact, the problem goes back at least to 1981. In an investigation of computer storage of multi-dimensional arrays,
finding a  polynomial in two variables that is ``approximately'' a bijection from $(\mathbb{Z}_{\geq 0})^2 \to \mathbb{Z}_{\geq 0}$ was important,
and in this relation Question \ref{QPoon} was posed by Lew in \cite{L}. Inspired by this question, Richard Stanley also asked\footnote{\url{https://mathoverflow.net/questions/45511/density-of-values-of-polynomials-in-two-variables}} the following regarding
the growth rate of these polynomials. Given any set $\mathfrak{B} \subseteq \bR$, let us define
$$
\mathcal{R}_F (\mathfrak{B}) =   F(\bZ^2) \cap \mathfrak{B}. 
$$
\begin{question}[Stanley]
\label{QRich}
Suppose $F \in \bQ[x,y]$ satisfies
$$
F(\bZ^2) \subseteq \bZ_{\geq 0}.
$$
Does it then follow that
$$
\# \mathcal{R}_F ([1, N]) \ll  \frac{N}{\sqrt{\log N}} \, ?
$$
\end{question}
We remark that the bound given is best possible as the well-known result by Landau \cite{La} establishes that
if $F(x,y) = x^2 + y^2$, then there exists $C(F) > 0$ such that
$$
\# \mathcal{R}_F ([1, N])  \sim C(F) \frac{N}{\sqrt{\log N}}.
$$

It is an easy exercise to see that there does not exist $F$ satisfying the condition in Question \ref{QPoon}
if $\deg F$ is odd. The same conclusion holds when $\deg F = 2$ as well (see Corollary \ref{quadpo}).
Therefore, the first real challenge of Question \ref{QPoon} is when $\deg F = 4$, which is what we answer in this paper.

\begin{theorem}
\label{mainthm}
There does not exist $F \in \bQ[x,y]$ of degree $4$ such that
$$
F(\bZ^2) = \bZ_{\geq 0}.
$$
\end{theorem}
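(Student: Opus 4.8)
Write $F = F_4 + F_3 + F_2 + F_1 + F_0$, where $F_j \in \bQ[x,y]$ is homogeneous of degree $j$ and $F_4 \not\equiv 0$. If $F_4(u_0,v_0) < 0$ for some $(u_0,v_0) \in \bR^2$, pick a primitive integer vector $(a,b)$ with $F_4(a,b) < 0$ (possible by density of $\bQ$ and homogeneity) and observe that $F(na, nb) = n^4 F_4(a,b) + O(n^3) \to -\infty$, contradicting $F(\bZ^2) \subseteq \bZ_{\geq 0}$. Hence $F_4$ is positive semidefinite. Its real vanishing locus is a $\Gal(\overline{\bQ}/\bQ)$-stable union of at most two lines, each of even multiplicity; reading off the possibilities and absorbing positive constants, exactly one of the following holds after a $\GL_2(\bZ)$ change of variables: (0) $F_4$ is positive definite; (a) $F_4 = c\,x^4$; (b) $F_4 = c\,x^2 y^2$; (c) $F_4 = c\,x^2 Q(x,y)$ with $Q \in \bQ[x,y]$ a positive definite quadratic form; (d) $F_4 = c\,q(x,y)^2$ with $q \in \bQ[x,y]$ an irreducible indefinite quadratic form. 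In each case we will show that $\#\bigl(F(\bZ^2) \cap [0,N]\bigr) \ll N/\sqrt{\log N}$; since $F(\bZ^2) = \bZ_{\geq 0}$ would force this to be $\geq N+1$, Theorem~\ref{mainthm} follows, and as a byproduct one obtains the bound of Question~\ref{QRich} for quartics. As each value in $F(\bZ^2)\cap[0,N]$ is attained at a lattice point of the sublevel set $\{(x,y)\in\bZ^2 : F(x,y) \le N\}$, it suffices in most cases to bound the number of such lattice points.

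In case (0) one has $F(u,v) \geq \tfrac{c}{2}(u^2+v^2)^2$ for $u^2+v^2$ large, so $\{F \le N\}$ lies in a disc of radius $O(N^{1/4})$ and contains $O(N^{1/2})$ lattice points; the same idea disposes of odd degree and, via Corollary~\ref{quadpo}, of degree $2$. In cases (a) and (c) one restricts $F$ to the rational line $x=0$: since $F_4$ vanishes there, $F(0,y)$ is a polynomial of degree $\le 3$ in $y$ that is everywhere $\ge 0$, hence of degree $\le 2$, and examining $F(x_0,\cdot)$ for small fixed $x_0 \ne 0$ forces the vanishing of various coefficients of $F_3,F_2,F_1$. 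Either the leading coefficient of $F(x_0,\cdot)$ in $y$ is then bounded below by a positive multiple of $x_0^2$, so that summing $O(\sqrt N/|x_0|)$ over the $O(N^{1/4})$ admissible $x_0$ yields $O(\sqrt N\log N)$ lattice points in $\{F\le N\}$; or the forced degeneracies accumulate until $F$ is a polynomial in $x$ alone, whose value set has density $0$. Case (b) is the same analysis carried out along both lines $x=0$ and $y=0$, the degenerate sub-case producing $F(x,y) = P(xy)$ for a one-variable polynomial $P$.

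Case (d) is the genuinely arithmetic one. Here $q$ is anisotropic over $\bQ$, so $|q(x,y)| \ge 1$ on $\bZ^2\setminus\{0\}$ and $F_4$ is never small at integer points; rather, $F_4$ is only of moderate size along the two (quadratic-irrational) lines $q=0$, and one controls $F$ there using convergents $x_n/y_n$ to the irrational slopes, for which $|q(x_n,y_n)| = O(1)$ while $|y_n|\to\infty$. Comparing $F$ along $(x_n,y_n)$ and along $(-x_n,-y_n)$ and using $F \ge 0$ forces, successively, $F_3 = q\cdot L_1$, then $F_2 = c_2\,q$, then $F_1 = 0$, for a rational linear form $L_1$ and a constant $c_2$; as soon as one of these identities is violated, $F$ grows like a fixed positive power of $|y_n|$ along the convergents while remaining $\gg (|x|+|y|)^4$ off narrow cones around the lines $q=0$, so that a cone analysis again bounds $\#\{F\le N\}$ by $O(\sqrt N\log N)$. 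In the surviving case $F = c\,q^2 + c_2\,q + F_0 = c\,t^2 + c_2\,t + F_0$ with $t = q(x,y)$ ranging over the set of integers represented by the binary quadratic form $q$, a set with $\ll M/\sqrt{\log M}$ elements in $[-M,M]$ by a classical theorem; hence $\#\bigl(F(\bZ^2)\cap[0,N]\bigr) \ll \sqrt N/\sqrt{\log N}$.

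The main obstacle is case (d): unlike the others, its sublevel sets genuinely contain $\gg N$ lattice points, so no counting estimate is available a priori and one must instead extract, by a delicate positivity-plus-Diophantine-approximation argument, the hidden fact that $F$ is essentially a quadratic polynomial in the norm form $q$. Keeping the bookkeeping of $F_3,F_2,F_1$ straight, and making the estimates on the cones around the lines $q=0$ rigorous — including the point that these lines carry no rational points other than the origin — is the technical heart of the argument.
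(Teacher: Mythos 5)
Your reduction of the problem to ``bound the number of lattice points of the sublevel set $\{(x,y)\in\bZ^2: F(x,y)\le N\}$'' is where the argument breaks: in your cases (a) and (b) that set need not be small, and can even be infinite, while the value set is sparse for reasons invisible to point counting. Two explicit examples, both with $F_4=x^4$ (your case (a)) and $F(\bZ^2)\subseteq\bZ_{\ge 0}$: for $F(x,y)=(x^2+y)^2+x^2$ the sublevel set contains $\asymp N$ lattice points (take $|x|\le\sqrt N$ and $y$ near $-x^2$), and the values are exactly sums of two squares, so the bound $N/\sqrt{\log N}$ requires a count of \emph{values} of a positive definite quadratic form (Landau/Bernays, Theorem \ref{Bar}), not of lattice points; for $F(x,y)=(x^2+y)(x^2+y+1)$ the sublevel set is infinite, since $F(x,-x^2)=0$ for every $x$. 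Moreover your stated dichotomy in case (a) cannot be repaired as written: because $F_4=cx^4$, the $y^2$-coefficient of $F(x_0,\cdot)$ is at most linear in $x_0$, and nonnegativity forces it to be a nonnegative constant, so the horn ``leading coefficient $\gg x_0^2$'' never occurs in case (a), while neither example above is a polynomial in $x$ alone. These are precisely the hardest subcases; the paper handles them in Proposition \ref{second} by writing $F=by^2+yg_1(x)+g_0(x)$, analyzing the discriminant-type polynomial $\mathfrak{D}(x)$ (negative values when $\mathfrak{D}\to+\infty$ or $\deg\mathfrak{D}$ odd, Lemma \ref{single} when $\mathfrak{D}$ is constant, Theorem \ref{Bar} when $\deg\mathfrak{D}=2$), and nothing in your sketch substitutes for that analysis; case (b) has the same defect.

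There is a second gap of the same size in the remaining cases. In case (d), nonnegativity does \emph{not} force $F_2=c_2q$ and $F_1=0$ (e.g.\ $F=q^2+x^2+y^2$ is nonnegative with $F_2$ not proportional to $q$); the paper's Proposition \ref{lastprop} instead splits on the shape of the completed-square remainder $Q$ and proves either a density bound $N^{1-\lambda}$ via the cone machinery of Proposition \ref{2222} or unboundedness below, and your fallback ``cone analysis gives $O(\sqrt N\log N)$ lattice points'' is asserted rather than proved. More seriously, your plan promises the density bound $\ll N/\sqrt{\log N}$ in \emph{every} case, but when $F$ is divisible by a linear, quadratic or cubic polynomial (such $F$ sit inside your cases (a)--(d); e.g.\ $F=(ax+b)K(x,y)$) the line-by-line counting meets the obstruction that the admissible values of $x_0$ can range over all integers, and no density bound is established; this is exactly why the paper states its density theorem only for irreducible $F$ and disposes of the reducible configurations through the prime-value obstruction of Proposition \ref{prime}, which needs Siegel's theorem (Theorem \ref{Siegel}) and Hilbert's irreducibility theorem (Theorem \ref{Hilb}). (A small further slip: two distinct rational lines cannot in general be normalized to $xy$ by a $\GL_2(\bZ)$ change of variables, e.g.\ $x^2(x+2y)^2$, so case (b) should read $F_4=cx^2(ax+by)^2$.) Your cases (0) and (c), and the convergent/Diophantine-approximation mechanism you propose for (d), do track the paper's Sections \ref{sec2}--\ref{sec5}, but as it stands the proposal does not prove the theorem.
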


As a consequence of our method, we also answer Question \ref{QRich} for irreducible quartic polynomials.
\begin{theorem}
Suppose $F \in \bQ[x,y]$ of $\deg F = 4$ is irreducible over $\bQ$ and satisfies
$$
F(\bZ^2) \subseteq \bZ_{\geq 0}.
$$
Then
$$
\# \mathcal{R}_F ([1, N]) \ll  \frac{N}{\sqrt{\log N}}.
$$
\end{theorem}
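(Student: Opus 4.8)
The strategy is to study the leading quartic form $F_4$ of $F$. Since $F(ta,tb)=F_4(a,b)t^4+O(t^3)$ for fixed $(a,b)\in\bZ^2$, the hypothesis $F(\bZ^2)\subseteq\bZ_{\geq 0}$ forces $F_4(a,b)\geq 0$ for all primitive directions, so $F_4$ is positive semi-definite on $\bR^2$. If $F_4$ is positive definite then $F(x,y)\geq c(x^2+y^2)^2-C$ for suitable $c>0$, $C$, so $\{F(x,y)\leq N\}$ lies in a disc of radius $O(N^{1/4})$, contains $O(N^{1/2})$ lattice points, and $\#\mathcal{R}_F([1,N])\ll N^{1/2}\ll N/\sqrt{\log N}$. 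Otherwise $F_4$ vanishes on a real line; since $F_4$ is semi-definite with rational coefficients, its real zeros have even multiplicity and form a Galois-stable set, so after a $\GL_2(\bZ)$ substitution (which leaves $F(\bZ^2)$ unchanged), either $x^2\mid F_4$, say $F_4=x^2 G(x,y)$ with $G$ a positive semi-definite quadratic form, or $F_4=cR(x,y)^2$ with $c>0$ and $R$ a rational binary quadratic form that is indefinite over $\bR$ and has no rational zero.

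Suppose $F_4=x^2 G$. Slicing along the lines $x=\text{const}$, nonnegativity kills the coefficient of $y^3$ in $F$ (otherwise $F(x,\cdot)$ is a cubic in $y$ taking negative values), so $F(x,\cdot)$ has $y$-degree $\leq 2$; its $y=0$ slice $F(0,\cdot)$ is, again by nonnegativity, a constant or an upward parabola, contributing $O(N^{1/2})$ values. If $G$ is not a scalar multiple of $x^2$, the coefficient of $y^2$ in $F(x,\cdot)$ is $\asymp x^2$ and $F(x,y)\leq N$ forces $|x|\ll N^{1/2}$; summing $\#\{y:F(x,y)\leq N\}\ll N^{1/2}/|x|+1$ over these $x$ gives $\#\mathcal{R}_F([1,N])\ll N^{1/2}\log N$. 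The delicate case is $F_4=cx^4$, where the coefficient of $y^2$ in $F(x,\cdot)$ is a nonnegative \emph{constant} $e_0$. If $e_0=0$ then $F$ is $y$-free and represents $\ll N^{1/4}$ integers. If $e_0>0$, completing the square in $y$ gives $F(x,y)=e_0\big(y+\tfrac{b(x)}{2e_0}\big)^2+\nu(x)$ with $b\in\bQ[x]$ of degree $\leq 2$ and $\nu=\min_{y\in\bR}F(x,\cdot)\in\bQ[x]$; nonnegativity forces $\nu(x)\geq -e_0/4$ on $\bZ$, so $\deg\nu$ is even. If $\deg\nu=4$ its leading coefficient is positive, whence $|x|\ll N^{1/4}$ and a lattice point count $O(N^{3/4})$; if $\deg\nu=0$, $F$ takes $O(N^{1/2})$ values. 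The remaining case $\deg\nu=2$ (then necessarily with positive leading coefficient) genuinely occurs — e.g.\ $F(x,y)=(x-y^2)^2+y^2$, whose value set is exactly the sums of two integer squares, of size $\asymp N/\sqrt{\log N}$ by Landau's theorem — and here, completing the square a second time, in $x$, one obtains $4e_0 F(x,y)+K=\Phi\big(2e_0 y+b(x),\,x+\tfrac{\nu_1}{2\nu_2}\big)$ for a fixed positive definite binary quadratic form $\Phi$, the argument ranging over a finite union of cosets of a finite-index sublattice of $\bZ^2$ as $(x,y)$ runs over $\bZ^2$. Thus every element of $\mathcal{R}_F([1,N])$ is, after an explicit affine rescaling, a value of $\Phi$ on such a coset; since a fixed positive definite binary quadratic form represents $\ll T/\sqrt{\log T}$ integers up to $T$ (Landau's theorem for $x^2+y^2$ and its extension by Bernays in general, which survives restriction to a sublattice coset), $\#\mathcal{R}_F([1,N])\ll N/\sqrt{\log N}$.

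Now suppose $F_4=cR^2$ with $R$ having no rational zero. The lines $R=0$ then carry only the origin, but by a Pell phenomenon every nonzero value $m$ of $R$ is attained at infinitely many lattice points escaping to infinity along the (irrational) asymptotic directions of $R$. Writing $F=cR^2+F_3+F_2+\cdots$ and evaluating along these points, nonnegativity forces successively $R\mid F_3$ and then $F_2(\theta,1)\geq 0$ for the two slopes $\theta$ of $R$. One then shows that either $F(x,y)\gg x^2+y^2$ near the asymptotes and $\gg(x^2+y^2)^2$ away from them — so $\{F(x,y)\leq N\}$ lies in a disc of radius $O(N^{1/4})$ together with the $O(N^{1/2})$-truncation of a thin neighbourhood of the two lines $R=0$, a region with $\ll N^{1/2}\log N$ lattice points — or else the surviving lower-order terms collapse and $F(x,y)=c\big(R(x,y)+\beta\big)^2+\gamma$, whence $F(x,y)\leq N$ forces $|R(x,y)|\ll N^{1/2}$ and $F$ takes $\ll N^{1/2}$ values. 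In both cases $\#\mathcal{R}_F([1,N])\ll N^{1/2}\log N\ll N/\sqrt{\log N}$.

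The crux — and the only regime where the bound $N/\sqrt{\log N}$ is essentially attained — is the sub-case $F_4=cx^4$, $\deg\nu=2$: the point is that nonnegativity, after twisting $y$ by a quadratic polynomial in $x$, pins $F$ down to a positive definite binary quadratic form up to an affine rescaling and finite lattice-coset bookkeeping, reducing the sharp count to the Landau--Bernays estimate. Making this reduction precise — and, separately, carrying out the Pell-point analysis behind the case $F_4=cR^2$, which is also one of the ingredients in the proof of Theorem~\ref{mainthm} — is where the work lies.
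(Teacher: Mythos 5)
Your overall architecture is close in spirit to the paper's (classify by the shape of $F_4$, kill coefficients using nonnegativity, complete squares, and reduce the sharp case to Landau--Bernays; your $F_4=cx^4$ analysis is essentially Proposition \ref{second}). But there is a genuine gap in the sub-case $F_4=x^2G$ with $G$ not proportional to $x^2$: the claim that $F(x,y)\le N$ forces $|x|\ll N^{1/2}$ is false. Nonnegativity only gives $\min_{y\in\bZ}F(x,y)\ge 0$, not $\min_{y}F(x,y)\gg x^2$. For instance $F(x,y)=(xy-1)^2+y^2$ is irreducible over $\bQ$, takes only nonnegative values, has $F_4=x^2y^2$ (so $G=y^2$, not proportional to $x^2$), and satisfies $F(x,0)=1$ for every $x$; the sublevel set $\{F\le N\}$ therefore contains infinitely many lattice points, so a lattice-point count cannot bound $\#\mathcal{R}_F([1,N])$ directly. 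For each $|x|>N^{1/2}$ there are at most two admissible $y$, but a priori these could contribute many distinct values, and ruling this out is real work: it is precisely why the paper passes to the completed square in the other variable, analyses the discriminant polynomial $\mathfrak{D}(u)$, and allows an exceptional set $\mathcal{E}$ of lines in Proposition \ref{2222}; note also that Proposition \ref{first} carries the extra hypothesis that no square of a second linear form divides $G_3$, the excluded configurations being handled instead in Sections \ref{sec2}, \ref{sec3} and \ref{proofprime} (where irreducibility enters). Your proposal has no argument covering this regime.

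A second, lesser issue: in the case $F_4=cR^2$ with $R$ irreducible indefinite, the dichotomy you assert (``either $F\gg x^2+y^2$ near the asymptotes \dots or the lower-order terms collapse and $F=c(R+\beta)^2+\gamma$'') is exactly the content of the paper's Section \ref{sec5} and is not established by your sketch. After completing the square one must control $a_4\bigl(H(x+q_1,y+q_2)+q_3\bigr)^2$ with rational shifts $q_1,q_2,q_3$, so Pell solutions of $R(x,y)=m$ do not suffice as stated; the paper needs Lemma \ref{quadpoly} together with the congruence-restricted Dirichlet approximation of Lemma \ref{Dirlem} (Elsner), plus a case analysis on the residual quadratic $Q$ (including the sub-case $H\mid Q_2$). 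Under the nonnegativity hypothesis your asserted conclusion is correct, but as written it is an assertion, and making it precise is where the work lies.
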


We remark that the counting function $\# \R_F([-N,N])$ in the case when $F \in \bZ[x,y]$ is a form has been the subject of an intensive study.
We refer the reader to \cite{SX} for a summary,
in which the first author and Stewart established the asymptotic formula for $\# \R_F([-N,N])$ when $\deg F \geq 3$ and the discriminant of $F$ is non-zero.


\subsection{Outline of the proof}
\label{outline}
For the remainder of the paper, we always assume $F \in \bZ[x,y]$ is of degree $4$ with $F(0,0) = 0$, and denote
$$
F(x,y) = \sum_{i=1}^4 F_i(x,y),
$$
where $F_i$ is the homogeneous degree $i$ part of $F$ (possibly the zero polynomial).
In order to prove Theorem \ref{mainthm}, it is sufficient to show that given any $F$,  
there do not exist $C \in \bN$ and $D \in \bZ$ such that
\begin{equation}
\label{maineq1}
F(\bZ^2) =  (C \bZ)_{\geq D} = \{ C n:  n \geq D/C \}.
\end{equation}
Our main strategy is to show that (\ref{maineq1}) can not occur due to one of the following three reasons:
\begin{enumerate}
  \item Large negative value (we have that $\inf_{(x,y) \in \bZ^2} F(x,y) = - \infty$).

  \item Density (we have that $\# \mathcal{R}_F  ([N, 2N]) = o(N)$).

  \item Reducibility of $F$ (the polynomial $F$ factors and  $\mathcal{R}_F(\mathbb{N})$ does not contain the set of prime numbers).
\end{enumerate}
It is clear that if any of these three conditions occur, then (\ref{maineq1}) does not hold.
For a ``random'' $F$, showing (\ref{maineq1}) is a relatively simple task. We proceed by eliminating these simple cases and arrive at $F$ with ``structure''.

If $F_4$ is positive semi-definite, then we let $\widetilde{F}_4 = F_4/P_0$, where $P_0$ is the positive definite integral form of the largest degree such that $P_0|F_4$
and $\widetilde{F}_4$ is a primitive integral form.
For simplicity, let us say $F$ is \textit{linked} if $F_4$ is positive semi-definite, $\gcd(\widetilde{F}_4, F_3) \neq 1$ and $\gcd(\widetilde{F}_4, F_3, F_2) = 1$.

We prove three main propositions in this paper, from which Theorem \ref{mainthm} follows.

\begin{proposition}
\label{ezprop1}
Let $F \in \bZ[x,y]$ be of degree $4$ with $F(0,0) = 0$ which is not linked.
Then
$$F(\bZ^2) \neq  (C\bZ)_{\geq D}$$ for any $C \in \bN$ and $D \in \bZ$.
\end{proposition}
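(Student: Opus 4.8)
The plan is to run a case analysis on the leading form $F_4$ (which is nonzero, since $\deg F=4$), exhibiting in every case one of the three obstructions listed in the outline. We use repeatedly that the properties ``$F$ is linked'', ``$F_4$ is positive (semi-)definite'', and ``$F(\bZ^2)=(C\bZ)_{\ge D}$'' are all unaffected when $F(x,y)$ is replaced by $F(ax+by,cx+dy)$ for a matrix in $\GL_2(\bZ)$, so we may normalize $F_4$ by a unimodular substitution. If $F_4$ is not positive semi-definite, then $F_4(a,b)<0$ for some $(a,b)\in\bZ^2$ (by homogeneity and density of $\bQ^2$), and $t\mapsto F(ta,tb)$ is a degree $4$ polynomial in $t\in\bZ$ with negative leading coefficient, so $\inf_{\bZ^2}F=-\infty$: reason (1). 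If $F_4$ is positive definite, then $F(x,y)\to+\infty$ as $x^2+y^2\to\infty$, so $F$ is bounded below, the region $\{(x,y)\in\bR^2:F(x,y)\le N\}$ is bounded with area $O(\sqrt N)$, and $\#\R_F([N,2N])\le\#\{(x,y)\in\bZ^2:F(x,y)\le 2N\}=O(\sqrt N)=o(N)$: reason (2). (In this case $\widetilde F_4$ is a constant and $F$ is automatically not linked, so one branch of the hypothesis is already done.)

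So assume $F_4$ is positive semi-definite but not positive definite, whence $\widetilde F_4$ is non-constant. A positive semi-definite real binary form with no positive definite quadratic factor is a positive scalar times a product of squares of linear forms; hence after a unimodular substitution $\widetilde F_4$ is a positive integer multiple of one of $x^2$, $x^4$, $x^2y^2$. Take first $\widetilde F_4\asymp x^2$ or $x^4$. Since $F_4$ is positive semi-definite, ``not linked'' means $\gcd(\widetilde F_4,F_3)=1$ or $\gcd(\widetilde F_4,F_3,F_2)\ne1$. If $\gcd(\widetilde F_4,F_3)=1$ then $x\nmid F_3$, so (as $x^2\mid F_4$) the polynomial $F(0,y)$ is a cubic in $y$ with nonzero leading coefficient and $\inf_{y\in\bZ}F(0,y)=-\infty$: reason (1). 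Otherwise $\gcd(\widetilde F_4,F_3,F_2)\ne1$ while $\gcd(\widetilde F_4,F_3)\ne1$; since $\widetilde F_4$ is a power of $x$ this forces $x\mid F_3$ and $x\mid F_2$, and then, as also $x^2\mid F_4$, we may write $F=x\,\Phi(x,y)+\zeta y$ with $\Phi\in\bZ[x,y]$ of degree $3$ and $\zeta$ the coefficient of $y$ in $F_1$. If $\zeta\ne0$, then $F(0,y)=\zeta y$ is unbounded below: reason (1). If $\zeta=0$, then $F=x\Phi$ is reducible; moreover the cubic part of $\Phi$ is divisible by $x$, so $F=x^2G(x,y)+x\rho(y)$ with $G\in\bZ[x,y]$ of degree $\le2$ and $\rho\in\bZ[y]$ of degree $\le2$.

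For such $F$ I would inspect the fibres $F(x,\cdot)\in\bZ[y]$, each of degree $\le2$. If some fibre is a quadratic in $y$ with negative leading coefficient, or a non-constant linear polynomial in $y$, then $\inf_{y\in\bZ}F(x,y)=-\infty$: reason (1). Otherwise all but finitely many fibres are upward parabolas in $y$, and computing the leading term of $x\mapsto\min_yF(x,y)$ as a polynomial in $x$ one either finds it negative at one end, giving reason (1) again, or concludes that $F$ is bounded below; in the latter case the region $\{(x,y)\in\bZ^2:0\le F(x,y)\le N\}$ contains only $\ll\sqrt N\log N$ lattice points, since $F_4$ being a \emph{degenerate} positive semi-definite quartic forces this region to be thin in the degenerate direction --- and in the subcase $\widetilde F_4\asymp x^2$ it is positive-definiteness of the residual quadratic $Q$ (equivalently $\Disc Q<0$) that makes $\min_yF(x,y)\to+\infty$. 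There is also the fully degenerate subcase in which $F\in\bZ[x]$ is a one-variable quartic with positive leading coefficient, whose image is $O(N^{1/4})$-sparse. In each such situation $\#\R_F([N,2N])=o(N)$: reason (2). The case $\widetilde F_4\asymp x^2y^2$ is symmetric: $\gcd(\widetilde F_4,F_3)=1$ forces $x\nmid F_3$ (and $y\nmid F_3$), hence $F(0,\cdot)$ is a cubic and reason (1) applies; otherwise, after possibly swapping $x$ and $y$, we have $x\mid F_3$, and $\gcd(\widetilde F_4,F_3,F_2)\ne1$ forces $x\mid F_2$ or ($y\mid F_3$ and $y\mid F_2$), reducing in both branches to a reducible $F=x\Phi$ (resp.\ $F=y\Phi$) treated exactly as before.

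The main obstacle is the bookkeeping in this last stage, particularly for $\widetilde F_4\asymp x^2y^2$, where two directions degenerate: one must enumerate each way the ``linked'' condition can fail and check that the corresponding branch terminates at reason (1) or reason (2). The one genuinely non-formal ingredient is the combination of the estimate $\min_yF(x,y)\to+\infty$ with the $\ll\sqrt N\log N$ lattice-point bound for $\{0\le F\le N\}$ when $F_4$ is positive semi-definite and degenerate; positive-definiteness of the residual quadratic factor is exactly the structural feature that powers it. This is also why reason (3) does not appear to be needed for this proposition: whenever $F$ is bounded below the density obstruction (reason (2)) already suffices, and whenever it is not, reason (1) does.
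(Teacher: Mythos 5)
Your reduction of $\widetilde F_4$ is where the argument breaks. Lemma \ref{possemi} only says that $\widetilde F_4$ is a product of irreducible \emph{primitive integral} forms with real zeros, each to even multiplicity; these factors need not be linear over $\bQ$. A positive semi-definite quartic such as $F_4=(x^2-2y^2)^2$ has $\widetilde F_4=H^2$ with $H=x^2-2y^2$ irreducible over $\bQ$: its real linear factors are irrational, so no $\GL_2(\bZ)$ substitution brings $\widetilde F_4$ to a multiple of $x^2$, $x^4$ or $x^2y^2$. Every later step of your plan (``$x\nmid F_3$, so $F(0,y)$ is a cubic'', ``$x\mid F_3$ and $x\mid F_2$'', the shape $F=x^2G+x\rho(y)$) presupposes a rational zero line and simply does not apply to this case, which is squarely inside the hypothesis ``not linked'' (e.g.\ when $\gcd(\widetilde F_4,F_3)=1$, or when $H\mid F_3$ and $H\mid F_2$). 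The paper handles exactly these irrational directions with Dirichlet's approximation theorem along the zero $(\cos\xi,\sin\xi)$ (Lemmas \ref{gcd} and \ref{baah}), and, when $\gcd(\widetilde F_4,F_3,F_2,F_1)\neq1$ with a quadratic common factor, with Lemma \ref{lemspcase}, which in turn leans on Lemma \ref{binsep}, Lemma \ref{single} and Proposition \ref{lastprop}. None of this is recoverable from your case list.

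Two further steps are unsound even in the rational branch. First, you infer reason (1) from $\min_{y\in\bR}F(x,y)\to-\infty$; this is false: $F=x^2y^2+x^2y=x^2y(y+1)$ lies in your final branch ($\widetilde F_4=x^2y^2$, $x\mid F_3$, $F_2=F_1=0$), its real fibre-minimum is $-x^2/4\to-\infty$, yet $F\ge0$ on $\bZ^2$ because the vertex sits between integers and the fibre is steep; the gap between the real and integral minimum is precisely the difficulty the paper confronts by completing the square and tracking $\mathfrak D$ in Sections \ref{sec4}--\ref{sec5}. Second, the claimed bound $\ll\sqrt N\log N$ on lattice points of $\{0\le F\le N\}$ fails whenever $F$ is constant on a rational line (again $F=x^2y(y+1)$ on $x=0$); one must count values or excise an exceptional set as in Proposition \ref{2222}. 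Finally, your assertion that reason (3) is dispensable is exactly what the paper does not do: the sub-case $\gcd(\widetilde F_4,F_3,F_2,F_1)\neq1$ is delegated to Proposition \ref{prime}, whose proof needs Siegel's theorem and Hilbert's irreducibility theorem to show that the value set misses most numbers of the form $Cp$ (e.g.\ for $F=(ax+b)K(x,y)$ where the cubic level curves $K=d$ may carry infinitely many integral points, or $F=H\cdot G$ with $H$ an indefinite irreducible quadratic); your sketch offers no substitute for that machinery.
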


\begin{proposition}
\label{ezprop2}
Let $F \in \bZ[x,y]$ be of degree $4$ with $F(0,0) = 0$ which is linked. Given any irreducible primitive integral form $H$ which divides $\gcd(\widetilde{F}_4, F_3)$,
let us denote $n_4(H)$ and $n_3(H)$ to be the largest integers such that  $H^{n_4(H)} | \widetilde{F}_4$  and $H^{n_3(H)} | F_3$ respectively.
If $n_3(H) \neq n_4(H)/2$ for all $H$, then $$F(\bZ^2) \neq  (C\bZ)_{\geq D}$$  for any $C \in \bN$ and $D \in \bZ$.
\end{proposition}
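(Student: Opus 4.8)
The plan is to argue by contradiction. Suppose $F(\bZ^2) = (C\bZ)_{\geq D}$ for some $C \in \bN$ and $D \in \bZ$, so that $\# \mathcal{R}_F([N,2N]) \asymp N$; we will show that one of reasons (1), (2) from Section~\ref{outline} must occur. Since $F(\bZ^2)$, the property of being linked, and the data $\widetilde{F}_4, F_3, F_2, n_3(H), n_4(H)$ are all preserved, equivariantly, under unimodular substitutions in $x, y$, we may normalise coordinates at will. The first step is structural. Because $F$ is linked, $F_4$ is positive semidefinite but not positive definite, so $\widetilde{F}_4$ is a primitive non-negative binary form of degree $\geq 1$ with no positive definite factor; since a non-negative binary form of odd degree vanishes identically, $\deg \widetilde{F}_4 \in \{2,4\}$, and hence $\widetilde{F}_4$ is exactly one of $\ell^2$, $\ell^4$, $\ell_1^2\ell_2^2$ (with $\ell, \ell_1 \not\sim \ell_2$ primitive integral linear forms) or $Q^2$ (with $Q$ a primitive integral irreducible, hence indefinite, binary quadratic form). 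In particular $n_4(H) \in \{2,4\}$ for every $H$ dividing $\gcd(\widetilde{F}_4, F_3)$. The case $\widetilde{F}_4 = Q^2$ is immediately excluded by the hypothesis: linkedness forces $Q \mid F_3$ (the unique irreducible factor of $\widetilde{F}_4$), while $\deg F_3 = 3$ forces $Q^2 \nmid F_3$, so $n_3(Q) = 1 = n_4(Q)/2$.

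Next I would restrict $F$ to the lines parallel to a radical line of $F_4$. Pick a primitive linear factor $\ell$ of $\widetilde{F}_4$, change coordinates so that $\ell = x$, and put $\phi_a(y) := F(a,y) \in \bZ[y]$. The exact powers $x^{n_4(\ell)} \parallel F_4$ and $x^{n_3(\ell)} \parallel F_3$, the positivity $F_4 \geq 0$, and the linkedness relation $\gcd(\widetilde{F}_4, F_3, F_2) = 1$ together determine the degree of $\phi_a$ in $y$ and the dependence of its top coefficients on $a$; the hypothesis $n_3(\ell) \neq n_4(\ell)/2$ is precisely what produces a favourable outcome in each case. If $\widetilde{F}_4 = \ell_1^2\ell_2^2$: linkedness gives $\ell_i \mid F_3$ for some $i$, and the hypothesis together with $\deg F_3 = 3$ shows that — when $F_3 \not\equiv 0$ — exactly one $\ell_i$ divides $F_3$; taking $\ell$ to be the other one makes $\phi_a$ cubic in $y$ for every $a$, whence $\inf_{\bZ^2} F = -\infty$ (reason (1)); when $F_3 \equiv 0$, completing the square gives $\min_y \phi_a(y) = \kappa a^2 + O(a)$ with $\kappa$ a nonzero multiple of the value of $F_2$ on the complementary radical line (nonzero because $\gcd(\widetilde{F}_4, F_3, F_2) = 1$ forces $\ell_1 \nmid F_2$), which yields reason (1) if $\kappa < 0$ or if a leading coefficient of $\phi_a$ is ever negative, and otherwise a confined quadratic as in Step~3. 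If $\widetilde{F}_4 = \ell^4$ (so $F_4 = c\,x^4$, $c > 0$), the hypothesis forces $n_3(\ell) \in \{1,3\}$: if $n_3(\ell) = 1$, the coefficient of $y^2$ in $\phi_a$ is a nonconstant affine function of $a$, hence negative for some $a$, giving reason (1); if $n_3(\ell) = 3$, then $F_3$ is a scalar multiple of $x^3$, and, with $e, f$ the coefficients of $y^2, xy$ in $F_2$ and $\nu$ that of $y$ in $F_1$: $e < 0$ gives reason (1); $e = 0$ makes $\phi_a$ affine in $y$ of slope $fa + \nu$, which is nonzero for some $a$ (reason (1)) unless $F$ is independent of $y$, in which case $\#\mathcal{R}_F([1,N]) \ll N^{1/4}$ (reason (2)); and $e > 0$ makes $\phi_a(y) = e y^2 + (fa+\nu)y + (c a^4 + O(a^3))$ a confined quadratic. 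If $\widetilde{F}_4 = \ell^2$ (so $F_4 = P_0\,x^2$ with $P_0$ positive definite), the hypothesis forces $n_3(\ell) \in \{2,3\}$, so $F_3(a,y)$ is affine in $y$ and $\phi_a(y) = (p a^2 + e) y^2 + M_a y + N_a$ with $p$ the (positive) coefficient of $y^2$ in $P_0$; then $p a^2 + e < 0$ for some $a$ gives reason (1), and otherwise every $\phi_a$ is a confined quadratic.

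Step~3 is the lattice-point count closing out reason (2). In each remaining case $\phi_a(y) = L_a y^2 + M_a y + N_a$ has $L_a > 0$, and completing the square — using positive-definiteness of $P_0$ (respectively $c > 0$, respectively $\kappa > 0$) — gives $\min_y \phi_a(y) \gg |a|^{\kappa_0}$ with $\kappa_0 \in \{2,4\}$; hence $\phi_a(y) \in [N,2N]$ has no solution once $|a| \gg N^{1/\kappa_0}$, while for smaller $a$ we have $\#\{ y \in \bZ : \phi_a(y) \leq 2N \} \ll \sqrt{N/L_a} + 1$. Since $L_a$ is here either $\asymp a^2$ or bounded (the latter forcing $\kappa_0 = 4$), summing over $|a| \ll N^{1/\kappa_0}$ gives $\ll \sqrt{N}\,\log N$ or $\ll N^{3/4}$; in either case $\#\mathcal{R}_F([N,2N]) \leq \#\{(x,y) \in \bZ^2 : F(x,y) \in [N,2N]\} = \sum_{a} \#\{ y : \phi_a(y) \in [N,2N] \} = o(N)$, which is reason (2) and contradicts $\#\mathcal{R}_F([N,2N]) \asymp N$. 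No angular decomposition is needed: the sum over $a$ already exhausts $\bZ^2$.

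The main obstacle is the bookkeeping of Step~2 — verifying, uniformly over the shapes of $\widetilde{F}_4$ and the permitted values of $n_3(\ell)$, that the exact powers of $x$ in $F_4, F_3$ and the (semi)definiteness of $F_4$ force the leading $y$-coefficient or the $y$-minimum of $\phi_a$ to behave as claimed, so that $n_3(\ell) \neq n_4(\ell)/2$ always yields a negative leading coefficient (reason (1)), an independence from $y$ (reason (2)), or a confined quadratic with fast-enough growth (reason (2)). The relevance of the hypothesis shows up exactly in the count: when $n_3(\ell) = n_4(\ell)/2$ the leading ($a^4$) term of $\min_y \phi_a$ can cancel after completing the square, leaving a slow-growing positive minimum $\asymp a^2$ alongside a bounded $L_a$, so that $\sum_{|a| \ll N^{1/2}} N/L_a \asymp N$ and the density argument fails; there $F$ takes the form $e\,(y - Y(x))^2 + (\text{positive definite quadratic in } x)$ and escapes all three reasons — which is why that borderline is left to the remaining proposition. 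A secondary point requiring care is that the non-vanishing of $F_2|_{\ell = 0}$ is indeed equivalent to $\gcd(\widetilde{F}_4, F_3, F_2) = 1$, and that the unimodular changes of variable preserve all the divisibility and integrality data used above.
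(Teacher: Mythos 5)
Your route is genuinely different from the paper's and, on the main line of cases, it checks out. Where the paper works in polar coordinates, isolates the angular neighbourhoods $\Theta_\xi(B)$ around the real zeros of $F_4$, compares the exponents $r_\xi,s_\xi$ via the quantity $\delta(\theta,B)$, and (Lemma \ref{cont2}) manufactures large negative values by Dirichlet/Roth approximation, you exploit the fact that under the hypothesis (when $F_3\not\equiv 0$) the form $\widetilde{F}_4$ can only be $\ell^2$, $\ell^4$ or $\ell_1^2\ell_2^2$, so every relevant direction is rational; restricting to the lines $\ell=a$ reduces everything to one-variable quadratics and cubics, and completing the square plus a line-by-line count replaces both Proposition \ref{2222} and the approximation lemmas. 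I verified your three shapes: the fixed nonzero $y^3$-coefficient for $\ell_1^2\ell_2^2$, the sign analysis of the $y^2$-coefficient for $\ell^4$, and the positivity of the $a^4$-coefficient of $\min_y\phi_a$ coming from the definiteness of $P_0$ for $\ell^2$ are all correct, as are the counts $\ll\sqrt N\log N$ and $\ll N^{3/4}$. This is more elementary than the paper's treatment (no Roth, no angular dissection), at the cost of not producing the infrastructure that Sections \ref{sec4}--\ref{sec5} reuse.

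There are, however, two gaps. First, the case $F_3\equiv 0$. Such $F$ can be linked and are not of the form (\ref{diffF2}), so Proposition \ref{orderlyprop} does not catch them; the paper's case division forces the present proposition to absorb them, and the paper does so in Lemma \ref{lemm}. Your own treatment of $\ell_1^2\ell_2^2$ with $F_3\equiv 0$ shows you regard them as in scope, but then they must be treated for all four shapes. For $\ell^2$ and $\ell^4$ your method extends verbatim (linkedness forces the $y^2$-coefficient of $F_2$ to be nonzero), but for $\widetilde{F}_4=Q^2$ with $Q$ irreducible the exclusion argument ($n_3(Q)=1=n_4(Q)/2$) no longer applies, and your framework has nothing to say: there is no rational line along which $F_4$ degenerates, and if $F_2$ is negative at a null direction of $Q$ one needs integer points with $Q(x,y)=O(1)$ along that quadratic-irrational direction --- exactly the Dirichlet input of Lemmas \ref{lemm} and \ref{Dirlem} --- contrary to your claim that no such input is required. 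Second, in the branch $\ell_1^2\ell_2^2$, $F_3\equiv 0$, $\kappa<0$, reason (1) does not follow merely from $\min_{y\in\bR}\phi_a(y)=\kappa a^2+O(a)$: the leading coefficient $L_a\asymp a^2$ is of the same order, so evaluating at the nearest integer to the vertex can cost $L_a/4\asymp a^2$ and cancel the negative main term. You need the additional observation that the vertex equals $-(\alpha/\beta)a+O(1/a)$, where $\ell_1=\alpha x+\beta y$ in your coordinates, so that along $a\equiv 0 \pmod{\beta}$ an integer lies within $O(1/a)$ of it. Both gaps are repairable (the second trivially, the first by importing the paper's approximation argument), but as written the proof does not cover the full statement.
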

After establishing Propositions \ref{ezprop1} and \ref{ezprop2} in Sections \ref{sec2} and \ref{sec3} respectively, the remaining $F$ are of the form
\begin{eqnarray}
\label{diffF2}
F(x, y) = H (x, y)^{2r} G_{4}(x, y) + H (x, y)^{r} G_{3}(x, y) + F_2 (x,y) + F_1(x,y),
\end{eqnarray}
where $r \in \{1,2\}$, $F_4 = H^{2r} G_{4}$ is positive semi-definite, $H$ is either a primitive integral linear form or an indefinite irreducible primitive integral quadratic form, $G_4$ and $G_3$ are integral forms and $H \nmid G_4, G_3, F_2$. Let us call these polynomials given by (\ref{diffF2})
\textit{quadratically composed}. We then establish the following proposition by combining the main results of Sections \ref{sec4} and \ref{sec5}, namely Propositions \ref{first}, \ref{second} and \ref{lastprop}.

\begin{proposition}
\label{orderlyprop} Let $F \in \bZ[x,y]$ be of degree $4$ with $F(0,0) = 0$.
If $F$ is quadratically composed, then $$F(\bZ^2) \neq  (C\bZ)_{\geq D}$$  for any $C \in \bN$ and $D \in \bZ$.
\end{proposition}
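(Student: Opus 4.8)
The plan is to analyze the structure of a quadratically composed $F$ along and near the zero locus of the form $H$, and to show that one of the three obstructions in the outline must occur. Write $F = H^{2r}G_4 + H^r G_3 + F_2 + F_1$ as in (\ref{diffF2}). The key idea is that on the curve $H(x,y)=0$ the quartic part contributes nothing, so $F$ restricted to integer points with $H(x,y)$ small behaves like a lower-degree polynomial, while off this curve the positive semi-definite term $H^{2r}G_4$ dominates. I would split into the linear and the quadratic cases for $H$.

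First I would treat the case $\deg H = 1$, say $H(x,y) = ax+by$ primitive. Parametrize the integer points by setting $H(x,y) = t$ and letting the complementary coordinate run over an arithmetic progression; substituting into $F$ gives, for each fixed $t$, a polynomial $\Phi_t(u) = t^{2r}g_4(u) + t^r g_3(u) + \phi_2(t,u) + \phi_1(t,u)$ in one integer variable $u$, of degree at most $4$ in $u$ but with the degree-$4$ and degree-$3$ coefficients carrying factors $t^{2r}$ and $t^r$. Taking $t = 0$ kills the quartic and cubic terms, leaving a polynomial of degree $\le 2$ in $u$ (coming from $F_2$ and $F_1$ on the line $H=0$, which is nonconstant because $H \nmid F_2$ — this is exactly where the linkage/quadratic-composition hypothesis is used). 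If that quadratic-or-lower polynomial is nonconstant and its leading coefficient is nonzero, its value set along the line $H=0$ is either unbounded below (obstruction 1) or is a one-variable quadratic whose image has density $0$ in $[N,2N]$; combined with a counting over the remaining $t\neq 0$ slices — where one shows $\#\{F(x,y)\in[N,2N]\} = o(N)$ by noting that for $|t|$ bounded each slice contributes $O(\sqrt N)$ reachable values and for $|t|$ large the $t^{2r}$ growth forces $x,y = O(N^{1/4})$ — one gets obstruction 2. The residual sub-cases (leading behavior degenerate, e.g. $F$ essentially a polynomial in $H$ alone) are handled by reducibility or by a direct congruence/sign argument.

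Next I would treat $\deg H = 2$, $H$ indefinite irreducible primitive. Here the zero locus $H(x,y)=0$ has no nonzero rational points in general, so instead I would use that $H$ indefinite means $H$ takes small (in particular, bounded) integer values on a positive-density set of $(x,y)$ with $\max(|x|,|y|)$ as large as we like — indeed the level sets $H(x,y)=t$ for fixed $t$ are (for many $t$) infinite, being governed by the Pell-type structure of the associated quadratic form. On such a level set, $F$ again reduces to $t^{2r}G_4 + t^r G_3 + F_2 + F_1$ evaluated along a binary form's fibre; parametrizing the fibre by the fundamental automorph of $H$, the values of $G_4, G_3, F_2, F_1$ along it grow like fixed powers of the automorph parameter, and one extracts either a sign change (obstruction 1) if the dominant term can be made negative, or a density bound (obstruction 2) because the union over $|t| \le T$ of these fibres contributes $o(N)$ values in $[N,2N]$ once $T$ is chosen as a small power of $N$, while $|t| > T$ again forces the variables to be small. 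The combination with Propositions \ref{first}, \ref{second}, \ref{lastprop} is what makes each of these branches precise: those propositions isolate exactly which value of $r$, which shape of $G_4, G_3$, and which congruence behavior of $F_2$ can survive, and for each surviving shape one of the three obstructions is exhibited.

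The main obstacle I expect is the quadratic-$H$ case, specifically controlling the values of $F$ along the infinitely many integer points of a fixed level set $H(x,y)=t$. On such a set the lower-order forms $F_2, F_1$ need not be constant, so $F$ is not simply a function of $t$; one must understand the joint growth of $H$-level-set parametrization against $G_3, F_2, F_1$, and rule out miraculous cancellation that keeps $F$ inside a thin arithmetic progression $(C\bZ)_{\ge D}$. This is presumably where the bulk of Sections \ref{sec4} and \ref{sec5} goes, and where the hypothesis $H \nmid G_3, F_2$ (plus the parity relation $n_3(H) = n_4(H)/2$ forced by Proposition \ref{ezprop2}) is used to guarantee a genuine lower-order main term that cannot be absorbed. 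Once that is in hand, the density and sign computations are routine, and Proposition \ref{orderlyprop} follows by assembling the cases.
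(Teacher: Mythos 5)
There is a genuine gap, and it sits exactly where the paper says the difficulty lies: possible cancellation among $H^{2r}G_4$, $H^rG_3$ and $F_2$ when all three are of comparable size. In your linear-$H$ branch, the counting claim ``for $|t|$ large the $t^{2r}$ growth forces $x,y=O(N^{1/4})$'' is false, and the conclusion you want (obstruction~2, i.e.\ $o(N)$ values) cannot be reached by any slice-by-slice or lattice-point count in the hardest sub-case. Take $F(x,y)=(x^2+2y)^2+x^2=x^4+4x^2y+4y^2+x^2$: this is quadratically composed with $H=x$, $r=2$, $G_3=4y$, $H\nmid F_2$, yet $F\in[N,2N]$ admits $x$ as large as $\sqrt{N}$ (with $y\approx -x^2/2$ compensating), the number of lattice points with $F(x,y)\in[N,2N]$ is $\asymp N$, and $\inf F=0$. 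So neither obstruction~1 nor any $o(N)$ bound obtained by counting points is available; the only way to beat density here is arithmetic, namely that the value set is contained in the values of a (shifted) positive definite binary quadratic form, so that Landau--Bernays gives $\ll N/\sqrt{\log N}$. The paper does exactly this: it completes the square in the variable complementary to $H$, analyzes the discriminant $\mathfrak{D}$, and in the critical case invokes Theorem \ref{Bar}; your proposal never brings in this ingredient, so the $r=2$ linear case (Proposition \ref{second}) is not covered by your argument. The same cancellation issue undermines the assertion that bounded-$|t|$ slices contribute only $O(\sqrt N)$: when the $v^2$-coefficient $g_2(t)$ of the slice polynomial vanishes or is negative one must argue separately (the paper isolates these lines as an exceptional set $\mathcal{E}$ or derives $\inf F=-\infty$).

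For the quadratic-$H$ case your proposal is a plan rather than a proof: parametrizing the Pell-type fibres $H=t$ and ``ruling out miraculous cancellation'' is precisely the step you leave open, and invoking Propositions \ref{first}, \ref{second}, \ref{lastprop} to ``make the branches precise'' is circular, since Proposition \ref{orderlyprop} is by definition their combination. The paper takes a different and more economical route there: since $r=1$ forces $F=a_4H^2+HL+F_2+F_1$, it completes the square in $H$ itself (Lemma \ref{quadpoly}), writes $F=a_4(H(x+q_1,y+q_2)+q_3)^2+Q(x,y)$, and then uses a Dirichlet-approximation lemma along the real zero ray of $H$ (Lemma \ref{Dirlem}) to make the square term bounded, so that the sign of $Q$ near that ray decides between $\inf F=-\infty$ and the lower bound (\ref{LOWER}) feeding into Proposition \ref{2222} (with Lemma \ref{single} and Bernays again covering the degenerate $Q$). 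Your $t=0$ analysis in the linear case is fine as far as it goes, but without the completing-the-square/discriminant analysis and the Bernays input, the proposal does not close the cases that are actually hard.
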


As we shall see, quadratically composed $F$ are the most challenging to deal with, and the techniques from
Sections \ref{sec2} and \ref{sec3} struggle to yield the desired conclusions. As such, we need to take a different path for this final case,
a more algebraic approach which makes use of the additional structure; this is achieved in Sections \ref{sec4} and \ref{sec5}.
Finally, it is a natural question to ask whether our approach generalizes to higher degrees. This will be the content of our forthcoming paper.

\textit{Acknowledgements.} 
The second author was supported by the NWO Veni Grant \texttt{016.Veni.192.047} during his time at Utrecht University and by the FWF grant P 36278 at the Institute of Science and Technology Austria while working on this paper.
The first author would like to thank Samir Siksek for introducing the problem to him. 
The material in Section \ref{proofprime} is a result of discussing with many people, and the second author is very grateful to
Tim Browning, Stephanie Chan, Jakob Glas, Jakub L\"{o}wit, Mirko Mauri, Marta Pieropan, Mike Roth, Matteo Verzobio and Victor Wang for taking their time to answer his questions and their valuable suggestions. We thank Tim Browning, Yijie Diao and Ana Marija Vego for their comments on an earlier version of the paper.

\textit{Conventions and Notations.} We will use the convention that $N$ is always a sufficiently large positive integer and $B$ a sufficiently large positive real number, even if it is not explicitly stated so. We also use the convention that the degree of the zero polynomial is $0$.
Throughout we use $\ll$ and $\gg$ to denote Vinogradov's well-known notation.
By $f \asymp g$ we mean $f \ll g$ and $f \gg g$. We also make use of the $O$-notation and $o$-notation due to Landau.
Given $(x, y) \in \bR^2 \setminus \{ \mathbf{0} \}$, we denote $\arg(x, y)$ to be  the unique $\theta \in [0, 2 \pi)$ satisfying
$$
\frac{x}{\sqrt{x^2 + y^2}} = \cos (\theta) \quad \textnormal{and} \quad  \frac{ y }{\sqrt{x^2 + y^2}}  =  \sin (\theta).
$$
A form is a homogeneous polynomial and a primitive integral form is a form whose greatest common divisor of the coefficients is $1$.
Recall that given an integral form $G$ and $\bF = \bR$ or $\bZ$, we say $G$ is positive semi-definite, negative semi-definite or indefinite over $\bF$
if $G(x,y) \geq 0$ for all $(x,y) \in \bF^2$, $G(x,y) \leq 0$ for all $(x,y) \in \bF^2$ or we have $F (x_1, y_1) > 0$ and $F (x_2, y_2) < 0$ for some
$(x_1,y_1)$ and $(x_2, y_2) \in \bF^2 \setminus \{ \mathbf{0} \}$ respectively. The definition of positive (or negative) definiteness is the same as that of
positive (or negative) semi-definiteness except it is with a strict inequality. We note that\footnote{This follows easily from the fact that if there exists $(x_1, y_1) \in \bR^2$ such that the form is positive (or negative), then there also exists $(x_2, y_2) \in \bZ^2$ with the same property.}
given an integral form whether it is positive semi-definite, negative semi-definite or indefinite over $\bR$ is equivalent to that over $\bZ$.
By irreducible we mean irreducible over $\bQ$, unless stated otherwise.

\section{Proof of Proposition \ref{ezprop1}}
\label{sec2}	

Our goal in this section is to establish some basic results and prove Proposition \ref{ezprop1}.
We begin with the following simple observation, which we leave as a basic exercise to the reader.
\begin{lemma} \label{single}
Suppose there exists a polynomial $\mathfrak{F} \in \bQ[t]$ of degree $d \geq 2$ and $G \in \bQ[x,y]$ such that $F = \mathfrak{F} \circ G$. Then
\[\# \R_F([N, 2N]) \ll N^{\frac{1}{d}}.\]
\end{lemma}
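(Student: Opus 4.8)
The plan is to use the factorization $F=\mathfrak{F}\circ G$ in the obvious way: every value of $F$ lying in $[N,2N]$ is $\mathfrak{F}(g)$ for some value $g$ of $G$ at an integer point, and the constraint $\mathfrak{F}(g)\in[N,2N]$ pins $g$ down to a bounded interval of length $O(N^{1/d})$. Since the values of $G$ at integer points all lie in a fixed scaled lattice, there are only $O(N^{1/d})$ admissible $g$, hence at most that many values of $F$ in $[N,2N]$.

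In detail, I would proceed as follows. First, write $G=\sum a_{ij}x^iy^j$ with $a_{ij}\in\bQ$ and let $M\in\bN$ be a common denominator of the coefficients $a_{ij}$; then $MG\in\bZ[x,y]$, so $G(\bZ^2)\subseteq\tfrac1M\bZ$. Second, since $\mathfrak{F}\in\bQ[t]$ has degree $d\ge 2$, its leading term dominates: if $a_d\neq 0$ is the leading coefficient, then there is $T_0\ge 1$ with $|\mathfrak{F}(t)|\ge \tfrac{|a_d|}{2}|t|^d$ for all $|t|\ge T_0$. Consequently, if $\mathfrak{F}(t)\in[N,2N]$ with $N$ large, then $|t|\le R:=\max\bigl\{T_0,\,(4N/|a_d|)^{1/d}\bigr\}$, and $R\ll N^{1/d}$ with the implied constant depending only on $\mathfrak{F}$. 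In other words $\mathfrak{F}^{-1}([N,2N])\subseteq[-R,R]$. Third, $\R_F([N,2N])=F(\bZ^2)\cap[N,2N]=\mathfrak{F}\bigl(G(\bZ^2)\bigr)\cap[N,2N]$, so each of its elements equals $\mathfrak{F}(g)$ for some $g\in G(\bZ^2)\cap\mathfrak{F}^{-1}([N,2N])\subseteq\tfrac1M\bZ\cap[-R,R]$. Therefore
\[
\#\R_F([N,2N])\le\#\Bigl(\tfrac1M\bZ\cap[-R,R]\Bigr)\le 2MR+1\ll N^{1/d},
\]
with the implied constant depending only on $F$ (through $\mathfrak{F}$ and $G$), which is exactly the claim.

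There is no serious obstacle here; the lemma is elementary. The only two points requiring a moment's care are that $G$ need not take integer values, handled by passing to the lattice $\tfrac1M\bZ$, and that the hypothesis $d\ge 2$ is precisely what makes $[-R,R]$ short compared with $[N,2N]$ — for $d=1$ the same argument yields only the trivial bound $\ll N$, so the gain genuinely comes from the degree of the outer polynomial.
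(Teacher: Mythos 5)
Your proof is correct: the paper leaves Lemma \ref{single} as an exercise, and your argument --- clearing denominators so that $G(\bZ^2)\subseteq\tfrac1M\bZ$, using the leading term of $\mathfrak{F}$ to confine the admissible values $g$ to an interval of length $O(N^{1/d})$, and counting lattice points of $\tfrac1M\bZ$ there --- is exactly the intended elementary argument.
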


\begin{lemma} \label{posdef lem}
Suppose $F_4$ is  positive definite. Then
$$
\# \mathcal{R}_F([N, 2N]) \ll \sqrt{N}.
$$
\end{lemma}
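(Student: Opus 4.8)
The idea is that when $F_4$ is positive definite, the level set $\{F = m\}$ is essentially a bounded curve, so very few lattice points lie on it, and summing over $m \in [N, 2N]$ gives the bound. More precisely, if $F_4$ is positive definite then there is a constant $c = c(F_4) > 0$ such that $F_4(x,y) \geq c (x^2 + y^2)^2$ for all $(x,y) \in \bR^2$, by compactness of the unit circle. Since $F - F_4 = F_3 + F_2 + F_1$ has degree at most $3$, we get $F(x,y) = F_4(x,y) + O((x^2+y^2)^{3/2}) \geq \tfrac{c}{2}(x^2+y^2)^2$ once $x^2 + y^2$ exceeds some absolute constant. Consequently, if $F(x,y) \in [N, 2N]$ then $x^2 + y^2 \ll \sqrt{N}$, i.e. $|x|, |y| \ll N^{1/4}$.

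The first step, then, is to record this growth estimate and conclude that all integer points $(x,y)$ with $F(x,y) \in [N,2N]$ lie in a box of side $O(N^{1/4})$. That already gives $\#\{(x,y) \in \bZ^2 : F(x,y) \in [N,2N]\} \ll \sqrt{N}$. But $\#\mathcal{R}_F([N,2N])$ counts distinct \emph{values}, which is no larger than the number of such points, so $\#\mathcal{R}_F([N,2N]) \ll \sqrt{N}$ follows immediately. (One could do better — the number of distinct values is typically of order $N$ minus a power saving — but the crude point count already suffices for the stated bound, and matches the shape of the other lemmas in this section.)

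Thus the proof is essentially a one-paragraph argument: positive definiteness of $F_4$ forces $F$ to grow like the fourth power of the norm, so its values in a window $[N,2N]$ come from a region of area $\asymp \sqrt{N}$, containing $O(\sqrt{N})$ lattice points. The only point requiring a word of care is the passage from $F_4$ being positive definite \emph{as an integral form} (the hypothesis, in light of the conventions stated in the introduction) to the inequality $F_4(x,y) \gg (x^2+y^2)^2$ over all of $\bR^2$: this is exactly the standard fact, noted in the paper's conventions, that definiteness over $\bZ$ and over $\bR$ coincide, together with continuity and homogeneity of $F_4$ on the unit circle. I do not anticipate any real obstacle here; the lemma is genuinely elementary, and the "hard" analytic input (Bernays/Landau-type asymptotics) is precisely what one is \emph{avoiding} by only asking for an upper bound of the expected order of magnitude.
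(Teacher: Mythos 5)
Your proposal is correct and matches the paper's own argument: positive definiteness gives $F_4(x,y) \gg \max\{|x|,|y|\}^4$, the lower-order terms are $O(\max\{|x|,|y|\}^3)$, so all points with $F(x,y)\in[N,2N]$ lie in a box of side $O(N^{1/4})$, and counting lattice points (which dominates the count of distinct values) gives $\ll \sqrt{N}$. No substantive difference from the paper's proof.
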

	
	\begin{proof}
Since $F_4$ is positive definite, there exists a  constant $c > 0$ such that
	\[F_4(x,y) \geq c \max\{|x|, |y|\}^4  \]
	for all $(x,y) \in \bZ^2$.
It is clear that
$$
F(x,y) =  F_4 (x,y) + O( \max\{|x|, |y|\}^3  ).
$$
Let $N$ be a sufficiently large positive integer.
Then we obtain
\begin{eqnarray}
\# \mathcal{R}_F([N, 2N])
\notag
&=&
\# \{(x,y) \in \bZ^2 : N \leq F(x,y) \leq 2 N\}
\\
\notag
&\leq& \# \left\{(x,y) \in \bZ^2 : \max\{|x|, |y|\}  \ll N^{\frac{1}{4}}\right\}
\\
\notag
&\ll& \sqrt{N}.
\end{eqnarray}
\end{proof}

	
\begin{lemma} \label{indef lem} Suppose $F_{4}$ is negative semi-definite or indefinite. Then
$$
\inf_{(x,y) \in \bZ^2} F(x, y) = - \infty.
$$
\end{lemma}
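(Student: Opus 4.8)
The plan is to produce an explicit one-parameter family of lattice points along which $F$ diverges to $-\infty$. First I would locate a direction in which the leading form is negative. Since $\deg F = 4$, the form $F_4$ is not identically zero, and by hypothesis it is negative semi-definite or indefinite; in either case there is a point of $\bR^2$ at which $F_4$ is negative. As $F_4$ is homogeneous and the set $\{(x,y) \in \bR^2 : F_4(x,y) < 0\}$ is therefore a nonempty open cone, it contains a rational — and hence, after clearing denominators, an integral — point. (Alternatively one simply invokes the convention recorded in the introduction that, for an integral form, being negative semi-definite or indefinite over $\bR$ is equivalent to the same over $\bZ$.) Thus we may fix $(x_0, y_0) \in \bZ^2 \setminus \{\mathbf{0}\}$ with $F_4(x_0, y_0) < 0$.

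Next I would restrict $F$ to the line through the origin in the direction $(x_0,y_0)$. For a positive integer $t$, using that each $F_i$ is homogeneous of degree $i$,
$$
F(t x_0, t y_0) = t^4 F_4(x_0, y_0) + t^3 F_3(x_0, y_0) + t^2 F_2(x_0, y_0) + t\, F_1(x_0, y_0).
$$
Here the leading coefficient $F_4(x_0,y_0)$ is a fixed negative real number while the remaining terms contribute $O(t^3)$, so $F(t x_0, t y_0) \to -\infty$ as $t \to \infty$. Since $(t x_0, t y_0) \in \bZ^2$ for every $t \in \bN$, this yields $\inf_{(x,y) \in \bZ^2} F(x,y) = -\infty$, as claimed.

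There is essentially no obstacle in this argument; it is a routine consequence of homogeneity and the definition of (in)definiteness. The only point deserving a word of care is the transition from a real point at which $F_4 < 0$ to an integral one, which is handled exactly as in the footnote to the conventions in the introduction.
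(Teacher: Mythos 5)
Your proof is correct and follows essentially the same route as the paper: pick an integral point $(x_0,y_0)$ with $F_4(x_0,y_0)<0$ (the passage from a real to an integral point being the only point needing a word, as you note) and scale along that direction, where the degree-$4$ term dominates the $O(t^3)$ remainder. No issues.
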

\begin{proof}
Since $F_{4}$ is negative definite or indefinite, there exists $(x_0, y_0) \in \bZ^2$ such that
$$
F_{4}(x_0, y_0) < 0.
$$
It is clear that
$$
F(N x_0, N y_0) = N^4 F_4(x_0, y_0) + O(N^3),
$$
and the result follows.
\end{proof}	

We conclude from Lemmas \ref{posdef lem} and \ref{indef lem} that $F_{4}$ must be positive semi-definite.
For the remainder of the paper, we will assume that this is the case.
We leave the following fact as an exercise to the reader. Furthermore, we define $\widetilde{F}_4$ to be as in the statement\footnote{This definition is equivalent to that given in Section \ref{outline}. }.
\begin{lemma}
\label{possemi}
If $F_{4}$ is positive semi-definite, then there exists a positive definite integral form $P_0$ (possibly a positive constant) such that
$\widetilde{F}_4 = F_4/P_0$ is a non-empty product of irreducible primitive integral forms $H$ with a non-trivial real zero and the multiplicity of each $H$ is even.
\end{lemma}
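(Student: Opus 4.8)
\emph{Plan.} The idea is to read off the structure of $F_4$ from its factorisation into irreducibles over $\bQ$, sorting the irreducible factors according to whether or not they have a non-trivial real zero. First I would invoke Gauss's lemma (every divisor of a binary form is again a binary form, and $\bZ[x,y]$ is a UFD) to write
$$
F_4 = c \prod_{j=1}^{k} H_j^{e_j},
$$
where $c$ is a nonzero integer, the $H_j$ are pairwise non-proportional irreducible primitive integral forms, and $e_j \geq 1$. Each $H_j$ falls into exactly one of two classes: (i) $H_j$ has no non-trivial real zero, in which case $H_j$ is nowhere zero on the unit circle, hence of constant sign there, hence positive or negative definite over $\bR$ (and of even degree); (ii) $H_j$ has a non-trivial real zero, and then, being irreducible over the field $\bQ$ of characteristic $0$, $H_j$ is squarefree and so has only simple zeros, whence $H_j$ changes sign across each of its real zeros.

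The crux is to show that every factor of type (ii) occurs to an even power. Fix such a $j$ and a real zero $(x_0, y_0) \neq \mathbf{0}$ of $H_j$. Since distinct irreducible forms have no common zero, the form $\phi := c \prod_{i \neq j} H_i^{e_i}$ is continuous and nonzero at $(x_0,y_0)$, hence of constant sign on a neighbourhood $U$ of $(x_0,y_0)$; on $U$ we have $F_4 = \phi \, H_j^{e_j}$, and as $H_j$ takes both signs arbitrarily near $(x_0,y_0)$ while $F_4 \geq 0$, the exponent $e_j$ must be even, say $e_j = 2 m_j$.

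Finally I would set $P_0 := c \prod_{j \ \mathrm{of\ type\ (i)}} H_j^{e_j}$ and $\widetilde{F}_4 := \prod_{j \ \mathrm{of\ type\ (ii)}} H_j^{2 m_j}$, so that $F_4 = P_0 \widetilde{F}_4$. By multiplicativity of content, $\widetilde{F}_4$ is primitive, and since a primitive integral polynomial dividing an integral one has integral quotient, $P_0 \in \bZ[x,y]$. As a nonzero constant times a product of forms that are definite over $\bR$, $P_0$ is itself definite over $\bR$; and $\widetilde{F}_4 = \left(\prod_{j} H_j^{m_j}\right)^2$ is non-negative and not identically zero, hence positive on a nonempty open set, on which $P_0 = F_4 / \widetilde{F}_4 \geq 0$, forcing $P_0$ to be positive definite (possibly a positive constant, which happens exactly when every irreducible factor of $F_4$ is of type (ii)). The product defining $\widetilde{F}_4$ is non-empty unless $F_4$ is positive definite, and that case does not arise here: by Lemma~\ref{posdef lem} it would force $\# \mathcal{R}_F([N,2N]) \ll \sqrt{N}$, incompatible with $F(\bZ^2) = (C\bZ)_{\geq D}$. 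The only step requiring any care is the simplicity of real zeros of an irreducible form, together with the bookkeeping of signs; the rest is routine use of Gauss's lemma.
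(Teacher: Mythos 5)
Your proof is correct; the paper leaves Lemma \ref{possemi} as an exercise, and your argument (factor $F_4$ as content times powers of pairwise non-proportional irreducible primitive integral forms, use separability of $\bQ$-irreducible forms to get simple real linear factors and hence a local sign change, and deduce evenness of each such exponent from $F_4 \geq 0$ since the complementary factor is nonvanishing near the zero) is exactly the intended standard one. Your remark that the ``non-empty product'' clause forces the exclusion of positive definite $F_4$, which you justify via Lemma \ref{posdef lem} and the standing goal $F(\bZ^2) = (C\bZ)_{\geq D}$, is the right reading of the statement in its context.
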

	
Next we deal with the case $\gcd( \widetilde{F}_{4}, F_{3}) = 1$.  
\begin{lemma} \label{gcd}
Suppose $F_{4}$ is positive semi-definite  and $\gcd(\widetilde{F}_{4}, F_{3}) = 1$.
Then
$$
\inf_{(x,y) \in \bZ^2} F(x, y) = - \infty.
$$
\end{lemma}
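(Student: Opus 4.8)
The plan is to exploit the hypothesis $\gcd(\widetilde F_4, F_3) = 1$ to produce a one-parameter family of integer points along which $F$ tends to $-\infty$. By Lemma \ref{possemi}, $F_4 = P_0 \widetilde F_4$ with $P_0$ positive definite and $\widetilde F_4$ a non-empty product of irreducible primitive integral forms $H$ with a non-trivial real zero, each occurring with even multiplicity. Fix one such factor $H$; since $H$ has a non-trivial real zero, there is a direction $(\alpha,\beta)\in\bR^2\setminus\{\mathbf 0\}$ with $H(\alpha,\beta)=0$, hence $F_4(\alpha,\beta)=0$. The idea is to move along a line (or low-degree curve) through the origin in this asymptotic direction and push $(x,y)$ slightly off the zero locus of $H$ so that $F_4$ grows more slowly than the cubic term $F_3$, which by $\gcd(\widetilde F_4,F_3)=1$ does \emph{not} vanish on the zero locus of $\widetilde F_4$.

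Concretely, I would first pass to new integral coordinates (via a $\GL_2(\bZ)$-change of variables, or by clearing denominators) so that one of the asymptotic directions of $F_4$ is the $x$-axis, i.e.\ $y \mid \widetilde F_4$. Writing $\widetilde F_4 = y^{2s} \cdot (\text{unit in that direction})$ is too crude; instead one should track that $F_4(x,y) = y^{2} \cdot (\text{integral form of degree } 2)$ up to the positive-definite factor $P_0$, so that $F_4(x,y) \ll x^2 y^2$ for $|y|$ bounded, while $F_3(x,0)$ is a nonzero cubic form in $x$ because $y \nmid F_3$ (that is exactly the coprimality hypothesis, restricted to the factor $H=y$). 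Then along the line $y = 1$ (or $y = $ a fixed small constant chosen to make $F_3$ negative in the right direction) we get
$$
F(x, 1) = F_3(x,1) + O(x^2) = a x^3 + O(x^2)
$$
with $a \neq 0$; choosing the sign of $x$ appropriately gives $F(x,1)\to -\infty$. The only subtlety is that the leading cubic coefficient $a$ of $F_3(x,1)$ might happen to vanish even though $y \nmid F_3$ — this is handled by instead evaluating on $y = $ a suitable fixed integer $t_0$, noting that $F_3(x, t_0)$ is a cubic in $x$ whose $x^3$-coefficient is independent of $t_0$ and nonzero precisely when the coefficient of $x^3$ in $F_3$ is nonzero, and if \emph{that} vanishes one uses a different asymptotic direction of $F_4$, or a monomial substitution $x \mapsto x$, $y \mapsto t_0$ after a further shear; since $\widetilde F_4$ is a product of the $H$'s and $F_3$ is coprime to it, some evaluation makes $F_3$ a genuine cubic while keeping $F_4$ quadratically bounded.

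The main obstacle I anticipate is \emph{organizing the change of variables and the choice of specialization line uniformly} — i.e.\ making sure that for \emph{every} admissible $F_4$ (which could be $P_0\cdot H_1^{2}\cdots H_k^{2}$ with several distinct irreducible factors, linear or quadratic) one can always find an integer direction in which $F_4$ drops to order at most $(\deg F_4 - 1)$ in one of the variables while $F_3$ retains degree $3$. The cleanest way to force this is: pick an irreducible factor $H$ of $\widetilde F_4$, choose $(x_0,y_0)\in\bZ^2$ on (or near) its real zero locus, and consider $F(m x_0 + x_1, m y_0 + y_1)$ as a polynomial in $m$ with $(x_1,y_1)$ a fixed auxiliary integer vector transverse to $(x_0,y_0)$; expanding, the $m^4$-coefficient is $F_4(x_0,y_0)=0$, the $m^3$-coefficient is a combination of $\nabla F_4(x_0,y_0)\cdot(x_1,y_1)$ and $F_3(x_0,y_0)$, and since $H \nmid F_3$ one can choose $(x_0,y_0)$ on the zero locus of $H$ with $F_3(x_0,y_0)\neq 0$ (a single such rational, hence integral after scaling, point exists because $F_3$ restricted to the line $H=0$ is a nonzero form). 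Then, after adjusting $(x_1,y_1)$ so the $m^3$-coefficient is nonzero and has the correct sign, $F(m x_0 + x_1, m y_0 + y_1) = (\text{nonzero})\, m^3 + O(m^2) \to -\infty$. The routine bookkeeping — verifying the lower-order terms really are $O(m^3)$, and that $F_4$ contributes nothing to the $m^4$ term — is straightforward and I would relegate it to a short computation.
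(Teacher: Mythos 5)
There is a genuine gap: your argument only works when the zero direction of $\widetilde{F}_4$ that you exploit is \emph{rational}, and the hypothesis does not guarantee this. Lemma \ref{possemi} allows $\widetilde{F}_4$ to be, say, $(x^2-2y^2)^2$, whose irreducible factor $H=x^2-2y^2$ is an indefinite quadratic form with only irrational real zeros. In that case no $\GL_2(\bZ)$-change of variables puts the zero direction on the $x$-axis (so the reduction ``$y \mid \widetilde F_4$'' and the specialization $y=t_0$ are unavailable), and the zero locus of $H$ contains no rational point other than the origin, so your claim that one can ``choose $(x_0,y_0)$ on the zero locus of $H$ with $F_3(x_0,y_0)\neq 0$ (a single such rational, hence integral after scaling, point exists)'' is false precisely in this case. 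Taking $(x_0,y_0)$ merely \emph{near} the zero locus does not rescue the expansion in $m$: then the $m^4$-coefficient $F_4(x_0,y_0)$ is a fixed \emph{positive} number (recall $F_4$ is positive semi-definite), so $F(mx_0+x_1,my_0+y_1)\to+\infty$ as $m\to\infty$, not $-\infty$. The whole difficulty of the lemma is concentrated in this irrational case; your method covers only the case where $\widetilde F_4$ has a rational linear factor (where, incidentally, your worry about the $x^3$-coefficient of $F_3(x,t_0)$ vanishing is unfounded: after the change of variables, $y\nmid F_3$ is exactly the statement that this coefficient is nonzero).

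The missing idea, and the one the paper uses, is to let the base point grow together with the quality of approximation: by Dirichlet's theorem on Diophantine approximation one finds infinitely many $(u,v)\in\bZ^2$ with $|\sin(\xi)u-\cos(\xi)v|\ll (u^2+v^2)^{-1/2}$, where $(\cos\xi,\sin\xi)$ is a (possibly irrational) real zero of $\widetilde F_4$. Writing $F_4=\mathfrak{L}^{2r}\mathfrak{F}_4$ with $\mathfrak{L}(x,y)=\sin(\xi)x-\cos(\xi)y$, this gives $F_4(u,v)\ll 1$, while $\gcd(\widetilde F_4,F_3)=1$ forces $F_3(\cos\xi,\sin\xi)\neq 0$, hence $|F_3(u,v)|\gg (u^2+v^2)^{3/2}$, and $F(u,v)=F_3(u,v)+O(u^2+v^2)$; replacing $(u,v)$ by $(-u,-v)$ if necessary makes this tend to $-\infty$. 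To repair your proof you would need to replace the fixed point $(x_0,y_0)$ and the parameter $m$ by such a sequence of increasingly good rational approximations to the irrational zero direction; as written, the argument does not go through.
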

\begin{proof}
We begin by noting that the hypotheses of the statement ensure that $F_3$ is not the zero polynomial.
Let $(\cos (\xi), \sin(\xi))$ be a zero of $\widetilde{F}_{4}$, which we know to exist by Lemma \ref{possemi},
and write
$$
\mathfrak{L}(x,y) = \sin (\xi) x - \cos (\xi) y
$$
and
$$
F_4(x, y) = \mathfrak{L}(x, y)^{2 r} \mathfrak{F}_{4} (x, y),
$$
where $r \in \mathbb{N}$ and $\mathfrak{L} \nmid \mathfrak{F}_{4}$. In particular, since $\gcd(\widetilde{F}_{4}, F_{3}) = 1$, we have $\mathfrak{L} \nmid F_{3}$.
By Dirichlet's theorem on Diophantine approximation,
there exist infinitely many $(u, v) \in \bZ^2$ satisfying
$$
|\sin (\xi) u - \cos (\xi) v| \ll \frac{1}{\sqrt{ u^2 + v^2 }}.
$$
For these $(u, v)$, we have
$$
F_{4}(u, v) = \mathfrak{L}(u, v)^{2 r} \mathfrak{F}_{4} (u, v)  \ll ( u^2 + v^2 )^{- r} (u^2 + v^2 )^{2 -  r} \ll (u^2 + v^2)^{2 - 2 r} \ll 1,
$$
$$
| F_{3}(u, v) |  \gg  (u^2 + v^2)^{ \frac{3}{2} }
\quad
\textnormal{and}
\quad
|F_{2}(u, v) + F_1(u, v)| \ll u^2 + v^2.
$$
Therefore, we obtain
$$
F (u, v) = F_{3}(u, v) + O(u^2 + v^2).
$$
By replacing $(u,v)$ with $(-u, - v)$ if necessary, it follows that
$$
F (u, v) < 0
\quad
\textnormal{and}
\quad
|F(u, v)| \gg (u^2 + v^2)^{ \frac{3}{2}},
$$
provided $\sqrt{u^2 + v^2}$ is sufficiently large, and the result follows.
\end{proof}	

By Lemma \ref{gcd} we may assume that $\gcd( \widetilde{F}_4, F_3) \ne 1$ from this point on.
We now deal with the case $\gcd( \widetilde{F}_4, F_3, F_2) \ne 1$ but $\gcd( \widetilde{F}_4, F_3, F_2, F_1) =  1$.
We are then left with the cases $\gcd(\widetilde{F}_4, F_3, F_2) = 1$, that is $F$ is linked,
and $\gcd(\widetilde{F}_4, F_3, F_2, F_1) \neq 1$.
	
\begin{lemma}
\label{baah}
Suppose $F_4$ is positive semi-definite,  $\gcd( \widetilde{F}_4, F_3, F_2) \ne 1$  and  $\gcd(\widetilde{F}_4, F_3, F_2, F_1) =  1$.
Then
$$
\inf_{(x,y) \in \bZ^2} F(x, y) = - \infty.
$$
\end{lemma}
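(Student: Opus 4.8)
The plan is to exhibit, for any such $F$, a sequence of integer points along which $F \to -\infty$. Fix an irreducible primitive integral form $H$ dividing $\gcd(\widetilde F_4, F_3, F_2)$, which exists by hypothesis. Since $H \mid \widetilde F_4$, Lemma \ref{possemi} shows that $H$ has a non-trivial real zero and occurs in $\widetilde F_4$ with even, positive multiplicity; as $F_4 = P_0 \widetilde F_4$ with $H \nmid P_0$ (a positive definite form has no real zero), the same holds in $F_4$, whence $H^2 \mid F_4$ and $\deg H \le 2$. Also $H \nmid F_1$, for otherwise $H$ would divide $\gcd(\widetilde F_4, F_3, F_2, F_1) = 1$; in particular $F_1 \ne 0$. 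The idea in both cases below is to restrict $F$ to a level set of $H$, on which it collapses to a linear form plus a constant.

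If $\deg H = 1$, then $H$ divides $F_2, F_3, F_4$, so $F \equiv F_1 \pmod{H}$. Writing $H = px+qy$ with $\gcd(p,q)=1$, the line $H = 0$ contains the integer points $t(q,-p)$ for $t \in \bZ$, and there $F(t(q,-p)) = t\,F_1(q,-p)$, with $F_1(q,-p) \ne 0$ (equality would make $F_1$ a rational multiple of $H$, contradicting $H \nmid F_1$); letting $t \to \pm\infty$ with the appropriate sign gives $F \to -\infty$. If $\deg H = 2$, then $H$ is anisotropic over $\bQ$ (being irreducible) and indefinite over $\bR$ (having a real zero); let $\omega_1, \omega_2$ be the conjugate real quadratic irrationals cutting out its two zero directions. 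Since $\deg F_4 = 4 = \deg H^2$ we have $F_4 = cH^2$ with $c \in \bZ_{>0}$, and similarly $F_3 = H L_3$ with $L_3$ a rational linear form (possibly $0$) and $F_2 = c_2 H$; hence on the conic $H = m$ one has $F = (mL_3 + F_1) + (cm^2 + c_2 m)$, a homogeneous linear form plus a constant. Take the convergents $p_n/q_n$ of $\omega_1$ (in lowest terms, $q_n > 0$): a direct computation from $H(p_n,q_n) = (\mathrm{const})\,(p_n - \omega_1 q_n)(p_n - \omega_2 q_n)$ together with $|p_n - \omega_1 q_n| < 1/q_n$ shows $|H(p_n,q_n)|$ is bounded and changes sign with $n$, so $H$ attains some positive and some negative integer value infinitely often along these points, while $(p_n,q_n) \to \infty$ in the zero direction given by $\omega_1$. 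Since $F_1$ (and $L_3$, unless $L_3 = 0$) cannot vanish at that irrational direction, there is at most one value of $m$ for which $mL_3 + F_1$ vanishes there, so we may pick an admissible value $m$ attained infinitely often; passing to that subsequence, $F$ on $H = m$ grows linearly in $\sqrt{p_n^2 + q_n^2}$ with non-zero rate, and replacing $(p_n,q_n)$ by $(-p_n,-q_n)$ if the rate is positive yields $F \to -\infty$.

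The main obstacle is the quadratic case. Unlike in Lemma \ref{gcd}, where $H \nmid F_3$ forced the cubic part to dominate, here $H$ divides $F_2, F_3, F_4$, and near a zero direction of $H$ the term $F_3 = H L_3$ has the same order $O(\sqrt{u^2+v^2})$ as $F_1$, so $F_1$ need not dominate --- indeed on suitable integer points near a zero direction $F$ stays bounded. Fixing a level $H = m$ is the device that resolves this: it turns $F$ into a genuine linear form, whose asymptotic rate along the chosen direction is killed by only one value of $m$, which we can avoid precisely because $H$ genuinely varies (changes sign) along convergents. The remaining ingredients --- the boundedness and sign change of $H$ on convergents, and the non-vanishing of the relevant rational linear forms at the irrational zero directions (both instances of a non-zero rational form evaluated at an irrational direction) --- are routine.
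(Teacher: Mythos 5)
Your argument is correct, but it is not the route the paper takes, and the comparison is worth recording. The paper's proof also works near a real zero $(\cos\xi,\sin\xi)$ of $\gcd(\widetilde F_4,F_3,F_2)$, but it stays at the level of approximation: Dirichlet's theorem produces integer points with $|\sin(\xi)u-\cos(\xi)v|\ll(u^2+v^2)^{-1/2}$, along which $F_4$ and $F_2$ are $O(1)$ and $F=F_3+F_1+O(1)$; since $F_3$ and $F_1$ can have the same order of magnitude there, the paper then splits into two cases --- if $F_3+F_1$ is unbounded on the sequence one flips $(u,v)\mapsto(-u,-v)$, and if it stays bounded one rescales by $M=\lfloor(u^2+v^2)^{\varepsilon_0/2}\rfloor$ so that $M^3F_3$ dominates. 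You instead exploit the quartic degree constraint to get exact structure: an irreducible factor $H$ of the gcd has degree at most $2$, and restricting $F$ to a level set of $H$ ($H=0$ with its integer line in the linear case; $H=m$ in the quadratic case, where $F_4=cH^2$, $F_3=HL_3$, $F_2=c_2H$ force $F$ to collapse to a linear form plus a constant), then using continued-fraction convergents on which $H$ is bounded, non-zero and alternates in sign, so a pigeonhole choice of level avoids the single value of $m$ at which $mL_3+F_1$ could vanish in the irrational zero direction. This sidesteps both the bounded/unbounded dichotomy and the rescaling trick, and in the linear case gives an exact, one-line divergence; it is in fact closer in spirit to the paper's later treatment of quadratically composed polynomials (compare Lemma \ref{Dirlem} and Proposition \ref{lastprop}, which likewise hunt for integer points where $|H|\ll1$ near its zero direction). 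What the paper's softer argument buys in exchange is uniformity: it never needs $\deg H\le2$ or the complete factorization of $F_4,F_3,F_2$ by $H$, so it is the version that scales to co-factors of higher degree, whereas your argument is tailored to (and cleaner for) the quartic case at hand. The only points to tighten are small: the sign alternation of $H(p_n,q_n)$ holds for all sufficiently large $n$ (once $p_n/q_n-\omega_2$ has fixed sign), and one should say explicitly that boundedness plus alternation plus pigeonhole yields two distinct integer values of $m$, one of each sign, each attained infinitely often, so at least one avoids the bad level.
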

\begin{proof}
We begin by noting that the hypotheses of the statement ensure that $F_1$ is not the zero polynomial.
Let $(\cos (\xi), \sin(\xi))$ be a zero of $\gcd( \widetilde{F}_4, F_3, F_2)$.
We write
$$
\mathfrak{L}(x,y) = \sin (\xi) x - \cos (\xi) y,
$$
$$
F_{4}(x, y) = \mathfrak{L}(x, y)^{2 r} \mathfrak{F}_{4} (x, y)
\quad
\textnormal{and}
\quad
F_{2}(x, y) = \mathfrak{L}(x, y)^{t} \mathfrak{F}_{2} (x, y),
$$
where $r, t \in \mathbb{N}$ and $\mathfrak{L} \nmid \mathfrak{F}_{4}, \mathfrak{F}_2$ (If $F_2$ happens to be the zero polynomial, then
we set $t = 0$ and $\mathfrak{F}_2$ to be the zero polynomial.).

By Dirichlet's theorem on Diophantine approximation, there exist infinitely many $(u, v) \in \bZ^2$ satisfying
$$
| \sin (\xi) u  -  \cos (\xi) v | \ll  \frac{1}{\sqrt{ u^2 + v^2 } }.
$$
For these $(u, v)$, we have
$$
F_{4}(u, v) = \mathfrak{L}(u, v)^{2 r} \mathfrak{F}_{4} (u, v)  \ll ( u^2 + v^2 )^{- r} (u^2 + v^2)^{2 -  r} \ll (u^2 + v^2)^{2 - 2r} \ll 1,
$$
$$
| F_{2}(u, v) | =  |\mathfrak{L}(u, v)|^{t} |\mathfrak{F}_{2} (u, v)| \ll (u^2 + v^2)^{- \frac{t}{2}} (u^2 + v^2)^{1 - \frac{t}{2}} \ll (u^2 + v^2)^{1 - t} \ll 1
$$
and
$$
\sqrt{u^2 + v^2} \ll | F_{1}(u, v) | \ll \sqrt{u^2 + v^2}.
$$
Therefore, we obtain
$$
F (u, v) = F_3(u,v) + F_{1}(u, v) + O(1).
$$
If $\{ F_3(u, v) + F_1(u, v) : (u,v) \}$ is an unbounded set, then by replacing $(u,v)$ with $(-u, -v)$ if necessary,
our result follows.  Suppose otherwise, i.e. $|F_3(u, v) + F_1(u, v) | \ll 1$ for all $(u, v)$ in consideration, which implies
$$
\sqrt{u^2 + v^2} \ll F_3(u, v) \ll \sqrt{u^2 + v^2}.
$$
Given any $M > 1$, we have
$$
F(M u, M v) = M^3 F_3(u, v) + M F_1(u, v) + O(M^4).
$$
Therefore, by setting $M = \lfloor (u^2 + v^2)^{\varepsilon_0/2} \rfloor$ with $\varepsilon_0 > 0$ sufficiently small, and again replacing $(u,v)$ with $(-u, -v)$ if necessary, we obtain our result in this case as well.
\end{proof}

Recall that the remaining cases are $F$ is linked and $\gcd(\widetilde{F}_4, F_3, F_2, F_1) \neq 1$.
We will prove the following in Section \ref{proofprime}, which completes our proof of Proposition \ref{ezprop1}.
\begin{proposition}
\label{prime}
Suppose $F_4$ is positive semi-definite and  $\gcd( \widetilde{F}_4, F_3, F_2, F_1) \ne 1$.
Let $C \in \bN$ and $D \in \bZ$.
Then
$$
F(\bZ^2) \neq  (C\bZ)_{\geq D}.
$$
\end{proposition}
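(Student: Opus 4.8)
The plan is to extract a factorisation of $F$ from the hypothesis and then argue by contradiction. Since $\gcd(\widetilde{F}_4, F_3, F_2, F_1) \neq 1$, I would fix an irreducible primitive integral form $H$ dividing it. By Lemma~\ref{possemi} this $H$ has a nontrivial real zero, so $\deg H \in \{1,2\}$; moreover $H \mid \widetilde{F}_4$ forces $H^2 \mid \widetilde{F}_4 \mid F_4$ (each irreducible factor of $\widetilde{F}_4$ occurs with even multiplicity), while $H$ divides $F_3, F_2, F_1$, and hence $H \mid F$. By Gauss's lemma (using that $H$ is primitive) one can write $F = HG$ with $G \in \bZ[x,y]$ non-constant of degree $4 - \deg H \in \{2,3\}$, the positive semidefiniteness of $F_4$ pinning down the leading form of $G$. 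Now suppose for contradiction that $F(\bZ^2) = (C\bZ)_{\geq D}$. Then for every prime $p$ larger than a suitable constant (in particular $p > C$) we must have $Cp \in F(\bZ^2)$, say $Cp = H(x,y)G(x,y)$ with $(x,y) \in \bZ^2$; since $p$ is prime and $p \nmid C$ it divides exactly one of the two factors, and dividing by $p$ shows the other factor is a nonzero divisor of $C$. Hence $(x,y)$ lies on one of the finitely many affine curves $\mathcal{D}_a\colon H = a$ $(1 \leq |a| \leq C)$ or $\mathcal{C}_e\colon G = e$ $(1 \leq |e| \leq C)$, on which $F$ equals $aG$ or $eH$ respectively.

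The core of the argument is to bound, for each such curve $\mathcal{X}$, the number of primes $p \in (N, 2N]$ for which $F = Cp$ is solvable on $\mathcal{X}$. I would decompose $\mathcal{X}$ into irreducible components and treat each component $\mathcal{X}'$ separately. If $\mathcal{X}' \cap \bZ^2$ is finite it contributes $O(1)$. Otherwise, by Siegel's theorem $\mathcal{X}'$ has genus $0$, and the classification of affine genus-$0$ curves with infinitely many integral points leaves two cases: either the integral points of $\mathcal{X}'$ grow at least geometrically (two conjugate places at infinity, the Pell-type case; this already occurs for $\mathcal{D}_a$ when $\deg H = 2$), so $\mathcal{X}'$ has only $O(\log B)$ integral points of height at most $B$ and contributes $O(\log N)$ primes; or, up to finitely many exceptions, $\mathcal{X}' \cap \bZ^2$ is parametrised polynomially by a parameter $t$ ranging over a union of arithmetic progressions (the one-place-at-infinity case, which includes all lines). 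In the latter case $F|_{\mathcal{X}'}$ becomes a polynomial $\phi(t)$: if $\phi$ is non-constant and is not of even degree with positive leading coefficient, then $\phi$ — hence $F$ — takes arbitrarily large negative values along infinitely many integer points, contradicting $F(\bZ^2) \subseteq \bZ_{\geq D}$; if $\phi$ is constant it contributes $O(1)$; and if $\phi$ has even degree $\geq 2$ with positive leading coefficient, then $\#\{t : 0 \leq \phi(t) \leq 2CN\} \ll_C N^{1/2}$, so $\mathcal{X}'$ contributes $\ll_C N^{1/2}$ primes. For a line $\mathcal{D}_a$ with $\deg H = 1$ this is elementary: its direction $V$ satisfies $H(V) = 0$, whence $F_4(V) = 0$, and using $H \mid F_3, F_2, F_1$ one checks that $F|_{\mathcal{D}_a}$ has degree at most $2$ in the parameter, so the same trichotomy applies. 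In every case we either reach a contradiction outright or bound the contribution of $\mathcal{X}'$ by $O_C(N^{1/2})$.

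Summing over the finitely many curves $\mathcal{D}_a, \mathcal{C}_e$ and their components, the number of primes $p \in (N, 2N]$ with $Cp \in F(\bZ^2)$ is $\ll_C N^{1/2}$ unless a contradiction has already been reached. Since $\pi(2N) - \pi(N) \gg N/\log N$, for $N$ sufficiently large this is impossible: some prime $p \in (N, 2N]$ has $Cp \notin F(\bZ^2)$, while $Cp \in (C\bZ)_{\geq D}$, which is the desired contradiction. The step I expect to be the main obstacle is the analysis of the curves $\mathcal{C}_e$: for $\deg H = 1$ these are genuine plane cubics that may well have genus $1$, so one genuinely needs Siegel's theorem (or a Thue-type input) for the finite case, a careful description of the integral points in the genus-$0$ case, and — above all — the observation that an affine-linear restriction of $F$ to a full progression of integer points is automatically unbounded below, which is incompatible with $F(\bZ^2)$ being bounded below.
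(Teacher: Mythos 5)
Your overall strategy is sound and reaches the same contradiction as the paper (too few primes $Cp$ representable, against $\pi(2N)-\pi(N)\gg N/\log N$, unless $F$ is unbounded below), but the route is genuinely different. The paper splits according to $\deg\gcd(\widetilde{F}_4,F_3,F_2,F_1)\in\{1,2,3,4\}$: the degree-$2$ case (Lemma \ref{lemspcase}) is handled by a mixture of Lemma \ref{binsep}, Proposition \ref{2222}, Proposition \ref{lastprop} and Lemma \ref{single}, while the degree-$1,3$ case (Lemma \ref{finlem}) pulls out a linear factor, restricts to the level sets forced by the prime $Cp$, and then counts integral points on the resulting cubic curves; crucially, in the genus-$0$ case the paper never classifies the integral points, but instead parametrizes the \emph{rational} points and bounds the number of integer $x$-coordinates by the quantitative Hilbert irreducibility theorem (Theorem \ref{Hilb}), getting $\ll\sqrt{N}\log N$. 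You instead pick a single irreducible $H$ dividing the gcd, factor $F=HG$, and use the prime-divisibility trick uniformly in all cases, which is arguably cleaner than the paper's case split; but then, in place of Hilbert irreducibility, you invoke the classification of genus-$0$ affine curves with infinitely many integral points (one rational place at infinity with polynomial parametrization of the integral points along arithmetic progressions, versus two conjugate places with Pell-type, logarithmically many, points). That classification is a true but nontrivial classical input which your sketch does not prove; the one-place case needs the normalization-plus-bounded-denominator argument to get the polynomial parametrization on progressions, and this is precisely the machinery the paper's Hilbert-irreducibility device is designed to avoid.

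Two smaller points you should make explicit. First, in the Pell-type case you bound the contribution by ``$O(\log B)$ integral points of height at most $B$,'' but the relevant quantity is the number of primes $p\in(N,2N]$ with $F=Cp$ solvable on the curve, and $F\le 2CN$ does not by itself bound the height of the point: you need to observe that the restriction of $F$ to the curve is a regular function which either has poles at both places at infinity (so $F\le 2CN$ does force height $\ll N^{O(1)}$, and the logarithmic count applies), or has a finite limit at one place (so its integer values along that end are eventually constant and contribute $O(1)$), or is constant. Second, Siegel's theorem requires geometric irreducibility, so components that are $\bQ$-irreducible but reducible over $\overline{\bQ}$ must be disposed of first (as in the paper's Lemma \ref{easy}); your ``finite contributes $O(1)$'' branch covers this, but only once that finiteness is noted. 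With these points filled in, your argument is a valid, and in places more unified, alternative to the paper's proof.
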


In this case, $F$ is reducible over $\bQ$ and the proof of Proposition \ref{prime} requires several results from Diophantine geometry.
We note that the material in Sections \ref{sec3}-\ref{sec5}  are  independent of that in Section \ref{proofprime}.


\section{Proof of Proposition \ref{ezprop2}}
\label{sec3}
Our goal in this section is to prove Proposition \ref{ezprop2}. We begin by setting some notations for the remainder of the paper.
Let $F_4$ be positive semi-definite and $\gcd(\widetilde{F}_4, F_3) \neq 1$. Let $\bT = \bR/ (2 \pi \bZ).$
We define $\Xi$ to be the set of all $\xi \in \bT$ such that
$$
\widetilde{F}_{4}( \cos (\xi), \sin (\xi) ) = 0
\quad
\textnormal{and}
\quad
F_{3}( \cos (\xi), \sin (\xi) ) = 0.
$$
Let $\xi \in \Xi$ and $H$ be the irreducible primitive integral form $H$ satisfying
$H(\cos(\xi), \sin (\xi)) = 0$.  By Lemma \ref{possemi}, it follows that $H$ divides $F_4$ with an even order, which we denote by $2 r_\xi$;
when we are working with a fixed choice of $\xi$, we will drop the subscript and simply denote $r$ for a clearer exposition.
We define
$$
\Theta_{\xi}(B) =  \left\{  \theta \in \bT:  |\theta - \xi| < c_\xi B^{ - \frac{1}{2r_\xi} }  \right\},
$$
where $c_{\xi} > 0$ is a sufficiently large constant. Then we have the following simple estimate, which we leave as an exercise to the reader.

\begin{lemma}
Let $F$ be as in this section.
Then there exists $C_1 > 0$ such that we have
\begin{eqnarray}
\label{BDD}
F (B \cos (\theta), B \sin(\theta)) \geq C_1 B^{3},
\end{eqnarray}
for all
$$
\theta \in \bT \setminus  \bigcup_{\xi \in \Xi}  \Theta_{\xi}(B).
$$
\end{lemma}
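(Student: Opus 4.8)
### Proof Sketch for the Final Lemma

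The statement to prove is the following: for $F$ with $F_4$ positive semi-definite and $\gcd(\widetilde{F}_4, F_3) \neq 1$, there exists $C_1 > 0$ such that
$$
F(B\cos\theta, B\sin\theta) \geq C_1 B^3
$$
for all $\theta \in \bT \setminus \bigcup_{\xi \in \Xi} \Theta_\xi(B)$.

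\textbf{Approach.} The plan is to estimate $F_4$, $F_3$, $F_2$ and $F_1$ separately on the complement of the union of the $\Theta_\xi(B)$, and show that the $F_4$ term dominates everything else by at least a constant times $B^3$ on this set. The key point is that $F_4(\cos\theta,\sin\theta)$ vanishes only at finitely many points of $\bT$ — namely the zeros $\xi$ of $\widetilde{F}_4$ — and near each such $\xi$ it vanishes to order exactly $2r_\xi$ in the variable $|\theta - \xi|$ (this is where Lemma \ref{possemi} enters, guaranteeing each irreducible factor $H$ occurs in $F_4$ with even multiplicity $2r_\xi$). The set $\Theta_\xi(B)$ is precisely designed so that off it, $|\theta - \xi| \geq c_\xi B^{-1/(2r_\xi)}$, hence $F_4(\cos\theta,\sin\theta) \gg |\theta - \xi|^{2r_\xi} \gg c_\xi^{2r_\xi} B^{-1}$ near $\xi$, giving $F_4(B\cos\theta, B\sin\theta) \gg c_\xi^{2r_\xi} B^3$ there. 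Away from all the $\xi$ (bounded distance), $F_4(\cos\theta,\sin\theta) \gg 1$, so $F_4(B\cos\theta,B\sin\theta) \gg B^4 \geq B^3$.

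\textbf{Key steps, in order.} First, I would fix the finite set of real zeros $\xi_1, \dots, \xi_k$ of $\widetilde{F}_4$ on $\bT$ (the zeros of $F_4$ on the circle that are not accounted for by the positive definite factor $P_0$), and note that for $\theta$ in a fixed small neighbourhood $U_j$ of $\xi_j$, we have $P_0(\cos\theta,\sin\theta) \asymp 1$ and $\widetilde{F}_4(\cos\theta,\sin\theta) \asymp \prod_i |\theta - \xi_i|^{2 r_{\xi_i}}$; restricting $U_j$ small enough that only the factor for $\xi_j$ is small, this is $\asymp |\theta - \xi_j|^{2r_{\xi_j}}$. Second, on the compact set $\bT \setminus \bigcup_j U_j$, the continuous function $F_4(\cos\theta,\sin\theta)$ is bounded below by a positive constant. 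Third, I would use the trivial bounds $|F_3(B\cos\theta,B\sin\theta)| \leq c_3 B^3$, $|F_2(B\cos\theta,B\sin\theta)| \leq c_2 B^2$, $|F_1(B\cos\theta,B\sin\theta)| \leq c_1 B$ valid uniformly in $\theta$. Fourth, combine: for $\theta \notin \bigcup_\xi \Theta_\xi(B)$ and $\theta \in U_j$, we get $|\theta - \xi_j| \geq c_{\xi_j} B^{-1/(2r_{\xi_j})}$ (this is the defining property of the complement of $\Theta_{\xi_j}(B)$, using that $\Theta_{\xi_i}(B)$ for $i \neq j$ is disjoint from $U_j$ once $B$ is large), so $F_4(B\cos\theta,B\sin\theta) \geq c' c_{\xi_j}^{2r_{\xi_j}} B^3$; choosing $c_{\xi_j}$ large enough that $c' c_{\xi_j}^{2r_{\xi_j}} > c_3 + 1$ makes $F_4$ beat the sum $|F_3| + |F_2| + |F_1|$ by $\geq B^3$ for $B$ large. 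For $\theta \notin \bigcup_j U_j$, $F_4$ is already $\gg B^4$, which dominates. In both regimes $F(B\cos\theta,B\sin\theta) \geq C_1 B^3$ for a suitable $C_1 > 0$.

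\textbf{Main obstacle.} The only delicate point is the bookkeeping in Step 4: one must check that the constants $c_\xi$ in the definition of $\Theta_\xi(B)$ can be chosen consistently — large enough (depending on the implied constants in the factorization estimate for $\widetilde{F}_4$ near $\xi$ and on the coefficient sizes of $F_3$) to force the domination — while still having the $\Theta_{\xi}(B)$ for distinct $\xi$ be pairwise disjoint and contained in the fixed neighbourhoods $U_j$ once $B$ is sufficiently large. Since the $\xi_j$ are finitely many and distinct, and $B^{-1/(2r_\xi)} \to 0$, this is automatic for $B$ large; one just has to be careful to quantify "sufficiently large constant $c_\xi$" and "sufficiently large $B$" in the right order (first choose the $c_\xi$, then choose $B$ large depending on them). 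Everything else is routine homogeneity and compactness, which is why the lemma is left as an exercise in the paper.
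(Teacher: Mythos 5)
Your argument is correct and is essentially the proof the paper intends (it is left there as an exercise, but the suppressed argument in the source is the same one): compactness of $\bT$ away from the real zeros of $\widetilde{F}_4$ gives $F_4(B\cos\theta,B\sin\theta)\gg B^4$, near a zero $\xi$ the vanishing of $F_4(\cos\theta,\sin\theta)$ to order exactly $2r_\xi$ gives $F_4(B\cos\theta,B\sin\theta)\gg c_\xi^{2r_\xi}B^3$ off $\Theta_\xi(B)$, and taking the $c_\xi$ sufficiently large beats $|F_3|+|F_2|+|F_1|=O(B^3)$; your remark on the order of quantifiers (first the neighbourhoods and the $c_\xi$, then $B$ large) is exactly the right bookkeeping.

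One caveat, which is really a mismatch with the paper's literal definitions rather than a flaw in your reasoning: in your Step 4 you tacitly assume that \emph{every} real zero of $\widetilde{F}_4$ on $\bT$ belongs to $\Xi$, so that an arc $\Theta_{\xi_j}(B)$ has been excised around each of them and $|\theta-\xi_j|\geq c_{\xi_j}B^{-1/(2r_{\xi_j})}$ is available. The paper defines $\Xi$ as the set of \emph{common} zeros of $\widetilde{F}_4$ and $F_3$; if $\widetilde{F}_4$ had a real zero $\xi'$ with $F_3(\cos\xi',\sin\xi')\neq 0$, then no arc around $\xi'$ is removed, $r_{\xi'}$ and $c_{\xi'}$ are not even defined, and at $\theta=\xi'$ (or $\xi'+\pi$) one has $F=F_3+O(B^2)$ with $F_3\asymp -B^3$ possible, so the claimed inequality itself fails there (e.g. $F=x^2y^2+x^3$ at $\theta=\pi$). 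Thus your proof establishes the lemma precisely under the reading that the real zeros of $\widetilde{F}_4$ all lie in $\Xi$ --- which is the reading the paper's own application of the lemma relies on, the complementary situation being disposed of by $\inf_{(x,y)\in\bZ^2}F(x,y)=-\infty$ arguments as in Lemma \ref{gcd} --- but as a blind proof of the statement as written it silently strengthens the hypothesis; it would be worth one sentence making that assumption explicit.
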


The idea is that once we are sufficiently ``far'' from the zeros of $F_4$, the situation is easy to control, and the challenge comes from ``near'' the zeros.
We now observe the following fact, which we will make use of throughout the remainder of the paper.

\begin{proposition}
\label{2222}
Let $F$ be as in this section. Let $\mathcal{E} \subseteq \bZ^2$.
Suppose for each $\xi \in \Xi$, there exists $C_{\xi} >0$ such that
\begin{eqnarray}
\label{LOWER}
F_4(B \cos (\theta), B \sin(\theta))
+
F_3 (B \cos (\theta), B \sin(\theta))
+ F_2 (B \cos (\theta), B \sin(\theta))
> C_{\xi} B^{2},
\end{eqnarray}
for all
$$
(B \cos (\theta), B \sin(\theta)) \in \bZ^2 \setminus \mathcal{E}
$$
with $\theta \in \Theta_\xi(B)$ and $B \gg 1$.
Then there exists $\lambda > 0$ such that
$$
\# \mathcal{R}_F([N, 2N]) \ll N^{1 - \lambda} + \# (\{ F(x,y):  (x,y) \in \mathcal{E} \} \cap [N, 2N]).
$$
\end{proposition}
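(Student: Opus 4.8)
The plan is to parametrise each nonzero lattice point $(x,y)$ by its modulus $B=\sqrt{x^2+y^2}$ and argument $\theta=\arg(x,y)$, so that $(x,y)=(B\cos\theta,B\sin\theta)$, and to reduce the statement to a lattice-point count. First, for each value $v\in\mathcal{R}_F([N,2N])$ I would choose one preimage $\phi(v)\in\bZ^2$; since $F\circ\phi=\mathrm{id}$ on $\mathcal{R}_F([N,2N])$, the map $\phi$ is injective, so $\#\mathcal{R}_F([N,2N])=\#\,\phi(\mathcal{R}_F([N,2N]))$. The preimages landing in $\mathcal{E}$ are distinct points of $\mathcal{E}$ with distinct $F$-values in $[N,2N]$, hence they number at most $\#(\{F(x,y):(x,y)\in\mathcal{E}\}\cap[N,2N])$, which is the second term of the claimed bound. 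It therefore suffices to show $\#\{(x,y)\in\bZ^2\setminus\mathcal{E}: N\le F(x,y)\le 2N\}\ll N^{1-\lambda}$ for a suitable $\lambda>0$.

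Next I would split this count according to the argument. If $\theta\notin\bigcup_{\xi\in\Xi}\Theta_\xi(B)$, then by (\ref{BDD}) we have $F(x,y)\ge C_1 B^3$, so $F(x,y)\le 2N$ forces $B\ll N^{1/3}$, and a disc of that radius contains $\ll N^{2/3}$ lattice points. If instead $\theta\in\Theta_\xi(B)$ for some $\xi\in\Xi$ and $(x,y)\notin\mathcal{E}$, then for $B$ exceeding an absolute constant (the finitely many smaller $B$ give $|F|=O(1)$, contributing nothing for $N$ large) the hypothesis (\ref{LOWER}) together with $|F_1(x,y)|\ll B$ yields $F(x,y)>\tfrac12 C_\xi B^2$, so $F(x,y)\le 2N$ forces $B\ll_\xi N^{1/2}$. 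Thus every ``near $\xi$'' point with value in $[N,2N]$ satisfies $B\ll_\xi N^{1/2}$ and $|\arg(x,y)-\xi|<c_\xi B^{-1/(2r_\xi)}$.

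To count those I would sum dyadically over $B$: on a range $B_0\le B\le 2B_0$ the admissible points lie in an annular sector of radii $\asymp B_0$ and angular width $\le c_\xi B_0^{-1/(2r_\xi)}$ (since $B^{-1/(2r_\xi)}$ is decreasing), whose area is $\asymp_\xi B_0^{\,2-1/(2r_\xi)}$; as $r_\xi\ge 1$ this exponent lies in $[\tfrac32,2)$, so the sector holds $\ll_\xi B_0^{\,2-1/(2r_\xi)}$ lattice points, and summing the resulting geometric series over dyadic $B_0\le N^{1/2}$ gives $\ll_\xi N^{1-1/(4r_\xi)}$. Since $\Xi$ is finite (it consists of the unit-circle zeros of the nonzero form $\gcd(\widetilde{F}_4,F_3)$), combining this with the $\ll N^{2/3}$ from the previous paragraph yields the assertion with $\lambda=\min\{\tfrac13,\ \min_{\xi\in\Xi}\tfrac1{4r_\xi}\}>0$.

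I do not expect a genuine obstacle here: the mechanism is simply that outside $\bigcup_\xi\Theta_\xi(B)$ one has the strong growth $F\gg B^3$ (forcing $B\ll N^{1/3}$), whereas inside $\Theta_\xi(B)$ only the weaker hypothesised growth $F\gg B^2$ is available (forcing merely $B\ll N^{1/2}$), but there the angular window has width $\asymp B^{-1/(2r_\xi)}$, and this thinness is exactly enough to keep the sector count below $N$. The only points requiring attention are the uniformity of the implied constants over the finite set $\Xi$ and the bookkeeping that converts counting $F$-values into counting lattice-point preimages.
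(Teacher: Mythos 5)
Your proposal is correct and follows essentially the same route as the paper: split the lattice points according to whether $\arg(x,y)$ lies in some $\Theta_\xi(B)$, use (\ref{BDD}) to force $B\ll N^{1/3}$ outside and (\ref{LOWER}) (with the $O(B)$ contribution of $F_1$ absorbed) to force $B\ll N^{1/2}$ inside, and then count the thin annular sectors dyadically by area plus boundary, which is exactly the paper's estimate $Z(T)\ll T^{2-\frac{1}{2R}}+T$. The only cosmetic difference is that you phrase the reduction to a point count via choosing one preimage per value, where the paper subtracts the $\mathcal{E}$-contribution directly; the bounds obtained are identical.
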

We note that $C_\xi$ depends on $c_\xi$ from the definition of $\Theta_\xi(B)$, but it is independent of $B$.
Also, we shall take $\mathcal{E}$ in the statement to be the empty set for all applications of this result in this section.

\begin{proof}
Let $(x, y) \in \bZ^2$ and suppose $N \leq F(x,y) \leq 2 N$.
By (\ref{BDD}) it follows that
$$
C_1 (x^2 + y^2)^{ \frac{3}{2} } \leq F(x,y) \leq 2 N
$$
if
$$
\arg (x, y) \in \bT \setminus \bigcup_{\xi \in \Xi}  \Theta_{\xi}(\sqrt{x^2 + y^2}).
$$
On the other hand, by  (\ref{LOWER}) it follows that there exists $C_2 > 0$ satisfying
$$
C_2  (x^2 + y^2)  \leq F(x,y) \leq 2N
$$
if $(x, y) \not \in \mathcal{E}$ and
$$
\arg (x, y) \in \bigcup_{\xi \in \Xi}  \Theta_{\xi}(\sqrt{x^2 + y^2}).
$$
Let
$$
R = \max_{\xi \in \Xi} r_\xi.
$$
Therefore, we obtain
\begin{eqnarray}
&& \# \mathcal{R}_F([N, 2N]) -  \# (\{ F(x,y):  (x,y) \in \mathcal{E} \} \cap [N, 2N])
\notag
\\
&\leq& \# \{ (x, y) \in \bZ^2 \setminus \mathcal{E}:  N \leq F(x,y) \leq 2 N \}
\notag
\\
\notag
&\ll&
\# \{ (x, y) \in \bZ^2:   \sqrt{x^2 + y^2}  \leq (2 N/C_1)^{ \frac{1}{3} }  \}
\\
\notag
&& +
\sum_{\xi \in \Xi}
\# \{ (x, y) \in \bZ^2: \sqrt{x^2 + y^2}  \leq (2 N/C_2)^{ \frac{1}{2} },
|\arg (x, y) - \xi| < c_{\xi} (x^2 + y^2)^{- \frac{1}{4 R}}   \}
\\
\notag
&\ll& N^{\frac{2}{3} } + N^{1 - \frac{1}{4 R}},
\end{eqnarray}
where the second term in the final inequality may be obtained as follows. Let
$$
Z(T) = \# \{ (x, y) \in \bZ^2:  T \leq   \sqrt{x^2 + y^2}  \leq 2 T,  |\arg (x, y) - \xi| \ll T^{- \frac{1}{2 R}}   \}.
$$
This quantity is bounded by the sum of the area of the region and the length of the boundary, to be more precise
$$
Z(T) \ll  T^{2 - \frac{1}{2R} }   +  T.
$$
Then by considering dyadic intervals, we obtain
\begin{eqnarray}
&&\# \{ (x, y) \in \bZ^2:   \sqrt{x^2 + y^2}  \leq (2 N/C_2)^{ \frac{1}{2} },
|\arg (x, y) - \xi| < c_{\xi} (x^2 + y^2)^{- \frac{1}{4 R}}   \}
\notag
\\
\notag
&\ll&
\sum_{1 \leq j \ll \log N } Z((2 N/C_2)^{ \frac{1}{2} } 2^{-j})
\\
\notag
&\ll&
N^{1 - \frac{1}{4 R}} + N^{\frac{1}{2}}.
\end{eqnarray}
\end{proof}

\begin{remark}
\label{REMAA}
The way we use this proposition is as follows. We will prove that for each $\xi \in \Xi$, either
(\ref{LOWER}) holds or we have
$$
\inf_{(x, y) \in \bZ^2 } F(x, y) = - \infty.
$$
If the latter happens for at least one $\xi$, then we are done. Otherwise, it means (\ref{LOWER}) holds for all $\xi \in \Xi$,
in which case by Proposition \ref{2222} it follows that there exists $\lambda > 0$ such that
$$
\# \mathcal{R}_F([N, 2N]) \ll N^{1 - \lambda} + \# (\{ F(x,y):  (x,y) \in \mathcal{E} \} \cap [N, 2N]).
$$
\end{remark}


From this point on, we assume that $F$ is linked.
In particular, it follows that $F_2$ is not the zero polynomial while $F_3$ may be. We begin with a simple consequence of Proposition \ref{2222}, which deals with the case when $F_3$ is the zero polynomial.
\begin{lemma}
\label{lemm}
Suppose $F_2(\cos(\xi), \sin(\xi)) < 0$ for some $\xi \in \Xi$. Then
$$
\inf_{(x, y) \in \bZ^2 } F(x, y) = - \infty.
$$
Furthermore, if $F_2(\cos(\xi), \sin(\xi)) > 0$ for all $\xi \in \Xi$ and
$F_3$ is the zero polynomial, then there exists $\lambda > 0$ such that
$$
\# \mathcal{R}_F([N, 2N]) \ll N^{1 - \lambda}.
$$
\end{lemma}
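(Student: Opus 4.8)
The plan is to handle the two assertions separately: the first is a variant of the proof of Lemma~\ref{gcd}, while the second is a short verification of the hypothesis~(\ref{LOWER}) of Proposition~\ref{2222}, in the manner of Remark~\ref{REMAA}, followed by an invocation of that proposition with $\mathcal{E} = \mt$.

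For the first assertion, I would fix $\xi \in \Xi$ with $F_2(\cos(\xi),\sin(\xi)) < 0$ and set $\mathfrak{L}(x,y) = \sin(\xi)x - \cos(\xi)y$. Since $\widetilde{F}_4(\cos(\xi),\sin(\xi)) = 0$, Lemma~\ref{possemi} lets me write $F_4 = \mathfrak{L}^{2r}\mathfrak{F}_4$ over $\bR$ with $\mathfrak{L}\nmid\mathfrak{F}_4$ and $r \in \bN$, and since $\xi\in\Xi$ either $F_3$ is the zero polynomial or $\mathfrak{L}\mid F_3$. Applying Dirichlet's theorem on Diophantine approximation exactly as in the proof of Lemma~\ref{gcd}, there are infinitely many $(u,v)\in\bZ^2$ with $|\mathfrak{L}(u,v)| \ll (u^2+v^2)^{-1/2}$, and counting degrees along such $(u,v)$ gives
$$
F_4(u,v) \ll 1, \qquad |F_3(u,v)| \ll (u^2+v^2)^{1/2}, \qquad |F_1(u,v)| \ll (u^2+v^2)^{1/2}.
$$
Moreover $|\mathfrak{L}(u,v)| \ll (u^2+v^2)^{-1/2}$ forces $\arg(u,v)$ to tend to $\xi$ or to $\xi+\pi$, so, as $F_2$ is a form of even degree, $F_2(u,v) = (u^2+v^2)F_2(\cos\arg(u,v),\sin\arg(u,v)) \le \tfrac12 F_2(\cos(\xi),\sin(\xi))(u^2+v^2) < 0$ once $u^2+v^2$ is large. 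Summing the four pieces, $F(u,v) \le \tfrac12 F_2(\cos(\xi),\sin(\xi))(u^2+v^2) + O((u^2+v^2)^{1/2}) \to -\infty$.

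For the second assertion, suppose $F_2(\cos(\xi),\sin(\xi)) > 0$ for all $\xi\in\Xi$ and $F_3 \equiv 0$. I would check~(\ref{LOWER}) with $\mathcal{E} = \mt$: fixing $\xi\in\Xi$, positive semi-definiteness of $F_4$ and $F_3 = 0$ give
$$
F_4(B\cos(\theta),B\sin(\theta)) + F_3(B\cos(\theta),B\sin(\theta)) + F_2(B\cos(\theta),B\sin(\theta)) \ge B^2 F_2(\cos(\theta),\sin(\theta))
$$
for all $\theta$ and $B$, and by continuity of $F_2(\cos(\cdot),\sin(\cdot))$ there is a fixed neighbourhood of $\xi$ on which the right side exceeds $B^2 \cdot \tfrac12 F_2(\cos(\xi),\sin(\xi)) =: C_\xi B^2$ with $C_\xi > 0$; for $B$ large enough $\Theta_\xi(B)$ is contained in that neighbourhood, since its radius $c_\xi B^{-1/(2r_\xi)}$ tends to $0$. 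Thus~(\ref{LOWER}) holds for every $\xi\in\Xi$, and Proposition~\ref{2222} yields $\lambda > 0$ with $\#\mathcal{R}_F([N,2N]) \ll N^{1-\lambda} + \#(\{F(x,y) : (x,y)\in\mt\}\cap[N,2N]) = N^{1-\lambda}$.

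The only points that require attention are the order-of-vanishing bookkeeping in the first part — one needs $2r \ge 2$ so that $F_4(u,v)\ll 1$, and $F_3$ to vanish to order at least $1$ in the direction $\xi$ so that $|F_3(u,v)| \ll (u^2+v^2)^{1/2}$, both of which are supplied by $F$ being linked together with $\xi\in\Xi$ — and, in the second part, the observation that positive semi-definiteness of $F_4$ renders its contribution to~(\ref{LOWER}) harmless as soon as $F_3 \equiv 0$. I do not expect a genuine obstacle here: this lemma only settles the sub-case $F_3\equiv 0$, the substantive work (when $F_3\not\equiv 0$) being carried out in the remainder of the section.
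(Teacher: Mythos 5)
Your argument is correct and follows essentially the same route as the paper: Dirichlet approximation along the direction $\xi$ (with the degree bookkeeping of Lemmas \ref{gcd} and \ref{baah}) to force $F(u,v)=F_2(u,v)+O(\sqrt{u^2+v^2})\to-\infty$ in the first case, and verification of (\ref{LOWER}) via $F_4\geq 0$, $F_3\equiv 0$ and continuity of $F_2(\cos\theta,\sin\theta)$ near $\xi$, followed by Proposition \ref{2222} with $\mathcal{E}=\emptyset$, in the second. The only cosmetic difference is that you handle the $\xi+\pi$ direction explicitly via the even degree of $F_2$, where the paper appeals to the mean value theorem.
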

\begin{proof}
Suppose $F_2(\cos(\xi), \sin(\xi)) < 0$ for some $\xi \in \Xi$.
By Dirichlet's theorem on Diophantine approximation, there exist infinitely many $(u, v) \in \bZ^2$ satisfying
\begin{eqnarray}
\label{DD1}
| \sin (\xi) u  -  \cos (\xi) v | \ll \frac{1}{ \sqrt{ u^2 + v^2  } }.
\end{eqnarray}
For these $(u, v)$, a similar argument as in the proof of Lemma \ref{baah} yields
$$
F_{4}(u, v)  \ll 1,
\quad
|F_3(u, v)| \ll  \sqrt{ u^2 + v^2  }
$$
and
$$
|F_2 (u, v)| \gg  u^2 + v^2.
$$
Furthermore, by the mean value theorem and (\ref{DD1}) we have
$$
| F_2(\cos(\xi), \sin (\xi)) - F_2 (\cos(\arg(u,v)), \sin (\arg(u,v)) )| \ll  |\xi - \arg(u,v)| \ll \frac{1}{ u^2 + v^2   }.
$$
In particular, it follows that  $F_2(u,v) < 0$, provided $\sqrt{u^2 + v^2}$ is sufficiently large.
Therefore, we obtain
$$
F(u,v) =  F_{2}(u, v) + O( \sqrt{u^2 + v^2} ),
$$
from which it easily follows that
$$
\inf_{(x,y) \in \bZ^2} F(x, y) = - \infty.
$$


Next we suppose $F_2(\cos(\xi), \sin(\xi)) > 0$ for all $\xi \in \Xi$ and  $F_3$ is the zero polynomial.
In particular, we have
\begin{eqnarray}
\label{+++}
F_2( B \cos(\theta), B  \sin (\theta)) = B^2 F_2(\cos(\theta), \sin (\theta)) > 0
\end{eqnarray}
for all $\theta \in \Theta_\xi(B)$.
Then it is clear that there exists $C_3 > 0$ satisfying
\begin{eqnarray}
\notag
&&F_4(B \cos (\theta), B \sin(\theta))
+
F_3 (B \cos (\theta), B \sin(\theta))
+ F_2 (B \cos (\theta), B \sin(\theta))
\\
\notag
&\geq&
F_2 (B \cos (\theta), B \sin(\theta))
\\
\notag
&>& C_3 B^{2}
\end{eqnarray}
for all $\theta \in \Theta_\xi(B)$ and $\xi \in \Xi$.
Therefore, we obtain that (\ref{LOWER}) holds for all $\xi \in \Xi$ and the conclusion follows by Proposition \ref{2222}.
\end{proof}

For the remainder of this section, we assume that $F_2(\cos(\xi), \sin (\xi)) > 0$ for all $\xi \in \Xi$ and $F_3$ is not the zero polynomial.
In particular, we have (\ref{+++}) for all $\theta \in \Theta_\xi(B)$. Let us fix a choice of $\xi \in \Xi$.
We now set more notations for the remainder of the paper. We denote
$$
\mathfrak{L}(x, y) = \sin (\xi) x - \cos (\xi) y,
$$
$$
F_{4}(x, y) = \mathfrak{L} (x, y)^{2r} \mathfrak{F}_{4}(x, y)
\quad
\textnormal{and}
\quad
F_{3}(x, y) = \mathfrak{L} (x, y)^{s} \mathfrak{F}_{3}(x, y),
$$
where $r = r_\xi, s = s_\xi \in \bN$ and $\mathfrak{L} \nmid \mathfrak{F}_{4}, \mathfrak{F}_{3}, F_2$. 
In particular, we have
$$
\mathfrak{F}_{4}(\cos (\xi), \sin(\xi)), \,   \mathfrak{F}_{3}(\cos (\xi), \sin(\xi)) \neq 0.
$$
We also remark that $\mathfrak{F}_{4}$ and $\mathfrak{F}_3$ depend on the choice of $\xi$.
For simplicity, we define
$$
\delta = \delta(\theta, B) =  \frac{\log  |\mathfrak{L} (\cos (\theta), \sin (\theta))| }{ \log B }
$$
so that
$$
B^{\delta(\theta, B)} =  |\mathfrak{L} (\cos (\theta), \sin (\theta))|.
$$
Recalling the definition $\Theta_{\xi}(B) = \{ \theta \in \bT:  |\theta - \xi| < c_\xi B^{ - \frac{1}{2r} }  \}$, it is clear that
$$
|\mathfrak{L} (\cos (\theta), \sin (\theta))|
= |\mathfrak{L} (\cos (\xi), \sin (\xi)) - \mathfrak{L} (\cos (\theta), \sin (\theta))|
\leq |\cos (\xi) - \cos (\theta) | + |\sin (\xi) - \sin (\theta) |
\ll B^{ - \frac{1}{2r} }
$$
for all $\theta \in \Theta_{\xi}(B)$. Consequently, $\delta(\cdot, B)$ is a continuous function in $\theta \in \Theta_{\xi}(B) \setminus \{\xi\}$
with range
\begin{eqnarray}
\label{cont}
I_{\xi}  =  \left(- \infty, - \frac{1}{2 r} + o_B(1) \right].
\end{eqnarray}
With these notations, we have
$$
F_{4}(B \cos (\theta), B \sin (\theta)) \asymp B^{4 + 2r \delta}, \quad F_{3}(B \cos (\theta), B \sin (\theta)) \asymp B^{3 + s \delta}
$$
and
$$
F_{2}(B \cos (\theta), B \sin (\theta)) \asymp B^{2}
$$
for all $\theta \in \Theta_{\xi}(B)$.

We begin by considering the two cases when either $|F_{4}(B \cos (\theta), B \sin (\theta))|$ or $|F_{2}(B \cos (\theta), B \sin (\theta))|$
dominates $|F_{3}(B \cos (\theta), B \sin (\theta))|$. Then we consider the case when $|F_{3}(B \cos (\theta), B \sin (\theta))|$ dominates the other two terms.
Here we take the general approach described in Remark \ref{REMAA}.

Let $\varepsilon > 0$ be sufficiently small and $B$ sufficiently large with respect to $\varepsilon$.
Suppose $\theta \in \Theta_\xi(B)$ satisfies
\begin{eqnarray}
\label{del2}
\delta <  - \frac{1 + \varepsilon}{s},
\end{eqnarray}
which is equivalent to
$$
2 > 3  + s \delta + \varepsilon.
$$
Then we have
\begin{eqnarray}
&&F_{4}(B \cos (\theta), B \sin(\theta)) + F_{3}(B \cos (\theta), B \sin(\theta)) +  F_{2}(B \cos (\theta), B \sin(\theta))
\label{ineqlo}
\\
\notag
&=&B^{4 + 2r \delta} \mathfrak{F}_{4}(\cos (\theta), \sin(\theta))
+
B^{3 + s \delta} \mathfrak{F}_{3}(\cos (\theta), \sin(\theta))
+
B^{2} F_{2}(\cos (\theta), \sin(\theta))
\\
\notag
&\geq&
B^{4 + 2r \delta} \mathfrak{F}_{4}(\cos (\theta), \sin(\theta)) +  \frac{B^{2}}{2} F_{2}(\cos (\theta), \sin(\theta))
\\
\notag
&\gg&
B^{4 + 2 r \delta} + B^{2}.
\end{eqnarray}
Next suppose $\theta \in \Theta_\xi(B)$ satisfies
\begin{eqnarray}
\label{del1}
(2r - s) \delta >  - 1 + \varepsilon,
\end{eqnarray}
which is equivalent to
$$
4 + 2r \delta > 3  + s \delta + \varepsilon.
$$
Then we have
\begin{eqnarray}
&&F_{4}(B \cos (\theta), B \sin(\theta)) + F_{3}(B \cos (\theta), B \sin(\theta)) +  F_{2}(B \cos (\theta), B \sin(\theta))
\\
\notag
&=& B^{4  + 2r \delta} \mathfrak{F}_{4}(\cos (\theta), \sin(\theta))
+
B^{3 + s \delta} \mathfrak{F}_{3}(\cos (\theta), \sin(\theta))
+
B^{2} F_{2}(\cos (\theta), \sin(\theta))
\\
\notag
&\geq&
\frac{ B^{4 + 2 r \delta} }{ 2 }  \mathfrak{F}_{4}(\cos (\theta), \sin(\theta)) +  B^{2} F_{2}( \cos (\theta), \sin(\theta))
\\
\notag
&\gg&
B^{4 + 2 r \delta} + B^{2}.
\end{eqnarray}

With these estimates, we can collect the following.
\begin{lemma}
For $\xi \in \Xi$ with $r_\xi < s_\xi$, we have that (\ref{LOWER}) holds.
\end{lemma}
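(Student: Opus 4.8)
The plan is to combine the two lower bounds already in hand — inequality (\ref{ineqlo}), valid whenever (\ref{del2}) holds, and the analogous lower bound established just afterwards under (\ref{del1}) — by showing that, under the hypothesis $r_\xi < s_\xi$, every direction $\theta \in \Theta_\xi(B)$ automatically satisfies (\ref{del2}) or (\ref{del1}). Fix such a $\xi$, write $r = r_\xi$ and $s = s_\xi$, and choose $\varepsilon > 0$ small enough that $\varepsilon < 1$ and, in the subcase $2r > s$, also $\varepsilon < (s-r)/r$; the latter quantity is positive precisely because $r < s$. Then take $B$ large with respect to $\varepsilon$. Note this proves slightly more than Proposition \ref{2222} needs, since we do not restrict to directions with $(B\cos\theta, B\sin\theta) \in \bZ^2$.

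First I would dispose of the degenerate direction $\theta = \xi$, where $\mathfrak{L}(\cos\theta,\sin\theta)=0$ and $\delta$ is undefined: there $F_4$ and $F_3$ vanish at $(B\cos\xi, B\sin\xi)$, so the left-hand side of (\ref{LOWER}) equals $B^2 F_2(\cos\xi,\sin\xi)$, which is $\gg B^2$ by (\ref{+++}) since $F_2(\cos\xi,\sin\xi)$ is a fixed positive number. For $\theta \in \Theta_\xi(B)\setminus\{\xi\}$ we have $0 < |\mathfrak{L}(\cos\theta,\sin\theta)| \ll B^{-1/2r} < 1$, hence $\delta = \delta(\theta,B) < 0$. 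Then I would split on the sign of $2r - s$. If $2r \leq s$ (compatible with $r < s$), then $(2r-s)\delta \geq 0 > -1+\varepsilon$, so (\ref{del1}) holds. If $2r > s$, then $0 < 2r - s < s$ by $r<s$; when $\delta < -(1+\varepsilon)/s$ condition (\ref{del2}) holds, and when $\delta \geq -(1+\varepsilon)/s$ multiplying by $2r - s > 0$ gives $(2r-s)\delta \geq -(2r-s)(1+\varepsilon)/s$, and the inequality $-(2r-s)(1+\varepsilon)/s > -1+\varepsilon$ is equivalent to $\varepsilon < (s-r)/r$, so (\ref{del1}) holds. In every case, applying (\ref{ineqlo}) or its analogue yields that the left-hand side of (\ref{LOWER}) is $\gg B^{4+2r\delta} + B^2 \geq B^2$, using $B^{4+2r\delta} = B^4|\mathfrak{L}(\cos\theta,\sin\theta)|^{2r} \geq 0$, with an implied constant independent of $B$ (depending only on $\varepsilon$, on $c_\xi$, and on the coefficients of $F$); this is (\ref{LOWER}) for a suitable $C_\xi > 0$.

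I do not expect a genuine obstacle here: the argument is bookkeeping of the three exponents $4+2r\delta$, $3+s\delta$, $2$ along $\Theta_\xi(B)$. The only point that truly uses $r_\xi < s_\xi$, and hence the one to state carefully, is that the two ``good'' regions (\ref{del2}) and (\ref{del1}) between them cover all $\delta < 0$ attained on $\Theta_\xi(B)$; in the subcase $2r>s$ this reduces to the inequality $\varepsilon < (s-r)/r$, which would fail if $r \geq s$. A secondary thing to keep in mind is the direction $\theta = \xi$ itself, which must be handled separately because $\delta$ is not defined there and one must fall back on the positivity of $F_2(\cos\xi,\sin\xi)$.
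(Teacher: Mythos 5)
Your proposal is correct and follows essentially the same route as the paper: you show that under $r_\xi < s_\xi$ the two regimes (\ref{del2}) and (\ref{del1}) cover every admissible value of $\delta$ on $\Theta_\xi(B)$ (via the same inequality $\varepsilon < (s-r)/r$ in the subcase $2r>s$), and then invoke the lower-bound computations already displayed. The only differences are cosmetic: you treat $2r\le s$ uniformly through (\ref{del1}) where the paper uses (\ref{del2}) for $2r<s$, and you explicitly dispose of the direction $\theta=\xi$, which the paper leaves implicit.
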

\begin{proof}
Clearly, the inequality
$$
 - \frac{1}{2r} + o_B(1) < - \frac{1 + \varepsilon}{ s }
$$
holds for $r, s \in \bN$ if and only if $2 r < s$. Therefore, if $2 r < s$, then
$$
I_\xi \subseteq  \left[ - \infty,   - \frac{1 + \varepsilon}{s}\right],
$$
that is every $\theta \in \Theta_\xi (B)$ satisfies (\ref{del2}). Thus for $\xi \in \Xi$ with $2 r_\xi < s_\xi$, we have that (\ref{LOWER}) holds.

Next it is easy to see that every $\theta \in \Theta_\xi(B)$ satisfies (\ref{del1}) if $2r = s$; therefore,
for $\xi \in \Xi$ with $2 r_\xi  = s_\xi$, we have that (\ref{LOWER}) holds.

We are left to deal with the case $2r > s$. Since the inequality
$$
- \frac{1 + \varepsilon}{ s } \leq   - \frac{1 - \varepsilon}{ 2r - s }
$$
is equivalent to
$$
r - s \geq - \varepsilon r,
$$
it is clear that $r, s \in \bN$, with $2r > s$, satisfy the former if and only if $r \geq s$. This means if
$r < s$, then given any $z \in \bR$ we have that either
$$
z < -  \frac{1+ \varepsilon}{s}
\quad
\textnormal{or}
\quad
z > - \frac{1 - \varepsilon}{2r-s}
$$
holds. In particular, every $\theta \in \Theta_\xi(B)$ satisfies either (\ref{del2}) or  (\ref{del1}).
Therefore, for $\xi \in \Xi$ with $r_\xi < s_\xi$ and $2 r_\xi > s_\xi$, we have that (\ref{LOWER}) holds.
\end{proof}

Thus the remaining case is $r \geq s$. Let us suppose $r > s$. We recall (\ref{cont}) and note that the following inequalities
$$
- \frac{1}{s} < - \frac{1}{s + \frac12 } < - \frac{1}{r} < - \frac{1}{2 r - s}
$$
hold in this case. Suppose $\theta_0 \in \Theta_{\xi}(B)$ satisfies
\begin{eqnarray}
\label{del eq}
\left| \delta_0 + \frac{1}{s + \frac12} \right| < \varepsilon,
\end{eqnarray}
where $\delta_0 = \delta(\theta_0, B)$.
In particular, $\delta_0$ satisfies
$$
- \frac{1 }{ s }  <  \delta_0 <   - \frac{1 }{ 2r - s },
$$
which implies
$$
4 + 2r \delta_0 + \varepsilon_0 < 3  + s \delta_0
$$
and
$$
2  +  \varepsilon_0 < 3  + s \delta_0,
$$
for $\varepsilon_0 > 0$ sufficiently small.
In this case, we have
\begin{eqnarray}
\label{eqn 1111}
&& F_{4}(B\cos (\theta_0), B \sin(\theta_0)) + F_{3}(B \cos (\theta_0), B \sin(\theta_0)) + F_{2}(B \cos (\theta_0), B \sin(\theta_0))
\\
\notag
&=&
B^{4 + 2r \delta_0} \mathfrak{F}_{4}(\cos (\theta_0), \sin(\theta_0))
+
B^{3 + s \delta_0} \mathfrak{F}_{3}(\cos (\theta_0), \sin(\theta_0))
+
B^{2} F_{2}(\cos (\theta_0), \sin(\theta_0))
\\
\notag
&=&
B^{3 + s \delta_0} \mathfrak{F}_{3}(\cos (\theta_0), \sin(\theta_0))
+
O( B^{4 + 2r \delta_0} + B^{2})
\\
\notag
&=&
F_{3}(B \cos (\theta_0), B \sin(\theta_0)) + O( B^{4 + 2r \delta_0} + B^{2}).
\end{eqnarray}

\begin{lemma}
\label{cont2}
Suppose $r_{\xi} > s_\xi$ for some $\xi \in \Xi$. Then
$$
\inf_{(x,y) \in \bZ^2} F(x,y) = - \infty.
$$
\end{lemma}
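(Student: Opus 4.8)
The plan is to run the estimate (\ref{eqn 1111}) along a carefully chosen sequence of lattice points whose arguments approach $\xi$ at exactly the rate that makes $F_3$ the dominant term. Fix $\xi\in\Xi$ with $r:=r_\xi>s:=s_\xi$, and keep the notation $\mathfrak{L},\mathfrak{F}_4,\mathfrak{F}_3$ from this section. The heart of the argument is the construction: for every sufficiently large real $B$ I would produce $(x,y)\in\bZ^2$ with $\sqrt{x^2+y^2}\asymp B$ and $|\mathfrak{L}(x,y)|=B^{1-\frac1{s+1/2}}+O(1)$. Concretely, after possibly interchanging the roles of the two coordinates we may assume $|\sin\xi|\ge\tfrac12$; one then fixes $y=\lfloor B\rfloor$ and lets $x$ run through the integers, so that $\mathfrak{L}(x,y)=\sin\xi\,x-\cos\xi\,y$ moves in steps of $\sin\xi$ and one can pick $x$ with $\mathfrak{L}(x,y)$ within $O(1)$ of the target $B^{1-\frac1{s+1/2}}$. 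Since $1-\frac1{s+1/2}\in(0,1)$, the resulting point has $|x|\ll B$, hence $\sqrt{x^2+y^2}\asymp B$, and writing $\theta_0=\arg(x,y)$ and $\delta_0=\delta(\theta_0,\sqrt{x^2+y^2})$ we get $\delta_0=-\frac1{s+1/2}+O\!\left(\frac1{\log B}\right)$.

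Two elementary inequalities then make everything fit, both coming from $r>s$ (so $r\ge s+1$). First, $2r>s+\tfrac12$, hence $\frac1{s+1/2}>\frac1{2r}$; this forces $|\theta_0-\xi|\asymp B^{-\frac1{s+1/2}}=o\!\left(B^{-\frac1{2r}}\right)$, so $\theta_0\in\Theta_\xi(\sqrt{x^2+y^2})$ for $B$ large, and also $\big|\delta_0+\frac1{s+\frac12}\big|<\varepsilon$, i.e. (\ref{del eq}) holds. Second, as $\delta_0\to-\frac1{s+1/2}$ one has $3+s\delta_0\to 2+\frac1{2s+1}>2$ and $(3+s\delta_0)-(4+2r\delta_0)\to-1+\frac{2r-s}{s+1/2}>0$, so for $B$ large the exponent $3+s\delta_0$ strictly exceeds both $4+2r\delta_0$ and $2$. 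Consequently (\ref{eqn 1111}), together with $F_1(x,y)=O(B)$ and $3+s\delta_0>2>1$, gives $F(x,y)=F_3(x,y)+O\!\left((x^2+y^2)^{2+r\delta_0}+(x^2+y^2)\right)$, while $|F_3(x,y)|\asymp(x^2+y^2)^{(3+s\delta_0)/2}$; hence $F(x,y)=F_3(x,y)\big(1+o(1)\big)$ as $B\to\infty$.

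To conclude I would use the usual sign-flip. Since $F_3$ is a form of odd degree, $F_3(-x,-y)=-F_3(x,y)$, while $F_4,F_2$ are even and $F_1$ is odd; applying the same estimate to $(-x,-y)$ — whose argument lies in $\Theta_{\xi+\pi}(\sqrt{x^2+y^2})$ with $\xi+\pi\in\Xi$ and $r_{\xi+\pi}=r>s=s_{\xi+\pi}$, or simply by bounding each $F_i(-x,-y)$ directly — yields $F(-x,-y)=-F_3(x,y)\big(1+o(1)\big)$. Therefore $\min\{F(x,y),F(-x,-y)\}=-|F_3(x,y)|\big(1+o(1)\big)$, and since $|F_3(x,y)|\asymp(x^2+y^2)^{(3+s\delta_0)/2}\to\infty$ with $3+s\delta_0>2$, we obtain $\inf_{(x,y)\in\bZ^2}F(x,y)=-\infty$.

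I expect the main obstacle to be the construction in the first paragraph: one has to recognise that the right input is \emph{not} a Dirichlet-type approximation — that would place $(x,y)$ far too close to the line $\mathfrak{L}=0$, forcing $\delta_0\le-2+o(1)$ and making $F_4$ rather than $F_3$ dominate — but lattice points at a prescribed \emph{moderate} distance $\asymp B^{1-\frac1{s+1/2}}$ from that line, which are produced by an explicit choice of one coordinate. Once that is in place, the rest is bookkeeping with the exponents, entirely governed by the single inequality $r\ge s+1$.
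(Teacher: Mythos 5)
Your proof is correct, and it reaches the conclusion by the same high-level strategy as the paper: produce integer points whose angle satisfies (\ref{del eq}), invoke the estimate (\ref{eqn 1111}) so that $F_3$ dominates, and flip signs via $(x,y)\mapsto(-x,-y)$. Where you genuinely diverge is in how the points satisfying (\ref{del eq}) are manufactured. The paper splits into two cases according to whether $\cos\xi/\sin\xi$ is rational: in the irrational case it combines Dirichlet's theorem with Roth's theorem to get a point with $\mathfrak{R}(u,v)^{-1-\varepsilon}\ll|\mathfrak{L}(u,v)|<\mathfrak{R}(u,v)^{-1}$ and then rescales radially by $M=\lfloor\mathfrak{R}(u,v)^{2s}\rfloor$ to push $\delta$ up into the window near $-\frac{1}{s+1/2}$ (the Roth lower bound is exactly what prevents $\delta$ from overshooting downward after scaling); in the rational case it perturbs integer points on the line $\mathfrak{L}=0$ by $\lfloor M^{1-\frac{2}{2s+1}}\rfloor$. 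Your construction — fix one coordinate $\approx B$ and solve for the other so that $\mathfrak{L}(x,y)$ lands within $O(1)$ of the prescribed moderate size $B^{1-\frac{1}{s+1/2}}$, using that $\mathfrak{L}$ moves in steps of size $\geq\tfrac12$ — achieves the same $\delta_0=-\frac{1}{s+1/2}+O(1/\log B)$ with no Diophantine approximation input at all, and it treats the rational and irrational slope cases uniformly. This is more elementary (no Roth) at no loss of strength for this lemma; your observation that a Dirichlet-type point would sit too close to the line ($\delta\le-2+o(1)$) correctly identifies why some device (the paper's radial rescaling, or your direct placement) is needed. The only delicate points — that your point's argument may lie near $\xi+\pi$ rather than $\xi$, and that $2r>s+\tfrac12$ guarantees membership in $\Theta_\xi$ — are handled correctly, since $\Xi$ is antipodally symmetric with the same $r_\xi,s_\xi$ and the evenness/oddness of the $F_i$ makes the sign-flip argument go through verbatim.
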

\begin{proof}
We shall consider two cases depending on whether $\cos (\xi)/ \sin (\xi) \in \mathbb{Q} \cup \{ \infty\}$ or not.
Let us begin with the case $\cos (\xi)/\sin (\xi) \notin \mathbb{Q} \cup \{ \infty \}$.
Let $\varepsilon > 0$ be sufficiently small. By Roth's theorem and Dirichlet's theorem on Diophantine approximation,
there exist infinitely many $(u, v) \in \bZ^2$ 
satisfying
$$
\frac{1}{|v|^{2 + \varepsilon}} \ll  \left| \frac{\cos (\xi)}{\sin (\xi)} -  \frac{u}{v} \right| < \frac{1}{|v|^2},
$$
which implies
$$
\frac{1}{ |v|^{1 + \varepsilon}} \ll  | \mathfrak{L}(u, v) | < \frac{1}{|v|}.
$$
For simplicity, we denote
$$
\mathfrak{R}(u,v) = \sqrt{u^2 + v^2}.
$$
We now set
$$
M =  \lfloor   \mathfrak{R}(u,v)^{2 s} \rfloor
$$
so that
$$
\frac{M}{ \mathfrak{R}(u,v) }  \asymp   
\mathfrak{R}(M u, M v)^{1 - \frac{2 }{ 2 s + 1} }.
$$
Then we have
\begin{eqnarray}
\notag
\mathfrak{R}(M u, M v)^{1 - \frac{2}{2 s + 1} - \varepsilon}
\ll
\frac{M}{|v|^{1 + \varepsilon}} \ll
| \mathfrak{L}(M u, M v )| \ll \frac{M}{|v|} \ll  \mathfrak{R}(Mu, M v)^{1 - \frac{2}{2 s + 1} }.
\end{eqnarray}
In particular, $\theta_0 = \arg(Mu , Mv)$ with $B = \mathfrak{R}(Mu, Mv)$ satisfies (\ref{del eq}), provided $\mathfrak{R}(u,v)$ is sufficiently large, which implies
(\ref{eqn 1111}). By replacing $(u, v)$ with $(- u, - v)$ if necessary, it follows that
$$
\inf_{(x,y) \in \bZ^2} F(x,y) = - \infty.
$$

On the other hand, let us now suppose $\cos (\xi)/\sin (\xi) \in \mathbb{Q} \cup \{ \infty \}$.
Without loss of generality we assume $\mathfrak{L} (1, 0) \neq 0$.
Let $(x_0, y_0) \in \mathbb{Z}^2 \setminus \{ \mathbf{0} \}$ be a zero of $\mathfrak{L}$.
Let $M \in \mathbb{N}$ be sufficiently large and
$$
(x, y) = \left(  M x_0  +   \lfloor  M^{1 - \frac{2}{ 2 s +  1 } } \rfloor, M y_0 \right).
$$
Then we have
$$
\mathfrak{R}(x, y) = M \mathfrak{R}(x_0, y_0) (1 + o_M(1)  )
$$
and
$$
\mathfrak{R}(x, y)^{1 - \frac{2 }{ 2 s +  1 } }  \ll   M^{1 - \frac{2 }{ 2 s +  1 }} \ll \mathfrak{L}(x,y) \ll  M^{1 - \frac{2 }{ 2 s +  1 }} \ll \mathfrak{R}(x, y)^{1 - \frac{2}{ 2 s +  1 } }.
$$
In particular, $\theta_0 = \arg(x, y)$ with $B = \mathfrak{R}(x, y)$ satisfies (\ref{del eq}), provided $\mathfrak{R}(x, y)$ is sufficiently large, which implies (\ref{eqn 1111}).
Therefore, by replacing $(x, y)$ with $(- x, - y)$ if necessary, the result follows.
\end{proof}

Finally, we combine all the work in this section, and recalling Remark \ref{REMAA}, to deduce the following.
\begin{proposition}
\label{random}
Let $F$ be as in this section (with the additional assumptions described after the proof of Lemma \ref{lemm}.). Suppose $r_{\xi} \neq s_\xi$ for all $\xi \in \Xi$.
Then either there exists $\lambda > 0$ such that
$$
\# \mathcal{R}_F([N, 2N]) \ll N^{1 - \lambda},
$$
or we have
$$
\inf_{(x, y) \in \bZ^2 } F(x, y) = - \infty.
$$
\end{proposition}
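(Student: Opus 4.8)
The plan is to assemble Proposition~\ref{random} from the lemmas already established in this section, exactly along the lines of the scheme described in Remark~\ref{REMAA}. Since we are assuming $r_\xi \neq s_\xi$ for every $\xi \in \Xi$, for each such $\xi$ precisely one of the inequalities $r_\xi < s_\xi$ or $r_\xi > s_\xi$ holds. I would split the argument according to whether the second alternative occurs for at least one $\xi$.

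First I would dispose of the case in which there exists some $\xi \in \Xi$ with $r_\xi > s_\xi$. Here Lemma~\ref{cont2} applies verbatim and gives $\inf_{(x,y) \in \bZ^2} F(x,y) = -\infty$, which is one of the two permitted conclusions, so the proposition holds in this case.

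It then remains to treat the complementary case, in which $r_\xi < s_\xi$ for every $\xi \in \Xi$. For each such $\xi$ the lemma proved above --- the one asserting that (\ref{LOWER}) holds whenever $r_\xi < s_\xi$, whose proof treats the sub-cases $2r_\xi < s_\xi$, $2r_\xi = s_\xi$, and $s_\xi < 2r_\xi$ (together with $r_\xi < s_\xi$) separately --- guarantees the lower bound (\ref{LOWER}). Hence (\ref{LOWER}) holds for all $\xi \in \Xi$, and feeding this into Proposition~\ref{2222} with $\mathcal{E} = \emptyset$ (so that the exceptional term drops out) yields some $\lambda > 0$ with $\#\mathcal{R}_F([N,2N]) \ll N^{1-\lambda}$, the other permitted conclusion.

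In short, all of the analytic substance is carried by the earlier lemmas, and the only thing to verify for the proposition itself is that this case split is exhaustive under the hypothesis $r_\xi \neq s_\xi$, together with the elementary observation that the three sub-cases handled by the ``(\ref{LOWER})-lemma'' do cover every pair $r_\xi, s_\xi \in \bN$ with $r_\xi < s_\xi$. I do not expect any genuine obstacle here; the work is purely organizational.
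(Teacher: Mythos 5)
Your proposal is correct and follows exactly the route the paper intends: the paper deduces Proposition~\ref{random} by combining Lemma~\ref{cont2} (for some $\xi$ with $r_\xi > s_\xi$, giving $\inf F = -\infty$) with the lemma asserting (\ref{LOWER}) for every $\xi$ with $r_\xi < s_\xi$, and then invoking Proposition~\ref{2222} with $\mathcal{E} = \emptyset$ as in Remark~\ref{REMAA}. Your case split and the observation that the sub-cases $2r_\xi < s_\xi$, $2r_\xi = s_\xi$, $2r_\xi > s_\xi$ exhaust $r_\xi < s_\xi$ are precisely the organizational content the paper leaves implicit.
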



It is clear that Proposition \ref{random} implies Proposition \ref{ezprop2}.
Therefore, the final case we are left with is the following: there exists  $\xi \in \Xi$ such that
\begin{eqnarray}
\notag
F(x, y) = \mathfrak{L} (x, y)^{2r} \mathfrak{F}_{4}(x, y) + \mathfrak{L} (x, y)^{r} \mathfrak{F}_{3}(x, y) + F_2 (x,y) + F_1(x,y),
\end{eqnarray}
from which it follows that we may rewrite $F$ as in (\ref{diffF2}).
We deal with these in the following two sections. In this case, it is possible that  $\delta = - 1/r$, which implies
$$
|F_{4}(B \cos (\theta), B \sin (\theta))|, |F_{3}(B \cos (\theta), B \sin (\theta))|, |F_{2}(B \cos (\theta), B \sin (\theta))|
\asymp
B^{2},
$$
that is the three terms are of the same magnitude. Since this allows for the possibility of cancellation among the three terms,
the techniques from this and the previous sections struggle to yield the desired conclusions.
However, by making a full use of the quadratically composed structure of $F$, we can obtain the result.

\section{Quadratically composed polynomials with $H$ linear}
\label{sec4}
It follows from (\ref{diffF2}) that there are three cases to consider:
\begin{enumerate}
  \item $H$ is a linear form and $r = 1$.
  \item $H$ is a linear form and $r = 2$.
  \item $H$ is an indefinite quadratic form and $r = 1$.
\end{enumerate}
We shall treat these three cases separately.
In this section, we prove Propositions \ref{first} and \ref{second} which deal with (1) and (2) respectively,
and Proposition \ref{lastprop} which deals with (3) in the next section.
We will prove these in a slightly more general setting than in (\ref{diffF2}).  
It is easy to see that Proposition \ref{orderlyprop} from Propositions \ref{first}, \ref{second} and \ref{lastprop}.

\begin{proposition}
\label{first}
Let
$$
F(x, y) = H (x, y)^{2} G_{4}(x, y) + H (x, y) G_{3}(x, y) + F_2 (x,y) + F_1(x,y),
$$
where $H$ is a primitive integral linear form, $G_4$ and $G_3$ are integral quadratic forms such that $H \nmid G_4, G_3$,
there does not exist a primitive integral linear form $L \nmid H$ such that $L^2|G_3$ and
no further assumptions on $F_2$ and $F_1$ (In particular, this contains $F$ as in (\ref{diffF2}) with $H$ a linear form and $r = 1$.).
Then either there exists $\lambda > 0$ such that
$$
\# \mathcal{R}_F([N, 2N]) \ll N^{1 - \lambda},
$$
or we have
$$
\inf_{(x, y) \in \bZ^2 } F(x, y) = - \infty.
$$
\end{proposition}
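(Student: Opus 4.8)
The plan is to exploit the linearity of $H$ to reduce everything to one–variable estimates on the fibres of $H$. Since $H$ is a primitive integral linear form, after a $\GL_2(\bZ)$ change of variables we may assume $H(x,y)=y$, and then, collecting powers of $x$,
$$
F(x,y)=P(y)\,x^2+Q(y)\,x+R(y),\qquad P,Q,R\in\bZ[Y].
$$
The hypothesis $H\nmid G_4$ says exactly that the $x^2$–coefficient of $G_4$ is nonzero, so $\deg P=2$; and since $F_4=y^2G_4$ is positive semi–definite, $G_4$ is positive semi–definite, so the leading coefficient of $P$ is a positive integer. Note $\deg Q\le 3$, $\deg R\le 4$ and $R(0)=0$.

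First I would dispose of the possibility $\inf_{(x,y)\in\bZ^2}F(x,y)=-\infty$, which is the second alternative in the statement. Assume then that $F$ is bounded below on $\bZ^2$. Examining the fibres $x\mapsto F(x,k)$, $k\in\bZ$: one must have $P(k)\ge0$ for every $k$ (otherwise $F(x,k)\to-\infty$ as $|x|\to\infty$), so $P(k)\ge1$ unless $P(k)=0$; and whenever $P(k)=0$ one must have $Q(k)=0$ as well (otherwise $F(x,k)=Q(k)x+R(k)\to-\infty$). Since $\deg P=2$ there are at most two integers $k$ with $P(k)=0$, and on each such fibre $F(x,k)=R(k)$ is constant, so these fibres contribute $O(1)$ to $\mathcal{R}_F$.

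Now the count. For $k\in\bZ$ with $P(k)\ne0$ the map $x\mapsto F(x,k)$ is a genuine quadratic with leading coefficient $P(k)\ge1$, hence
$$
\#\{x\in\bZ:\ N\le F(x,k)\le 2N\}\ \ll\ \sqrt{N/P(k)}+1\ \ll\ \frac{\sqrt N}{1+|k|},
$$
using $P(k)\asymp k^2$ for large $|k|$. It remains to bound the range of $k$: a fibre $y=k$ can meet $[N,2N]$ only when $\mu(k):=\min_{x\in\bR}F(x,k)=R(k)-Q(k)^2/(4P(k))\le 2N$, and since $F$ is bounded below the quantity $\mu$ is bounded below on $\bZ$; comparing with its asymptotics ($4P(k)\mu(k)$ is a polynomial of degree $\le6$), $\mu$ either grows like a positive power of $|k|$ --- forcing $|k|\ll N^{1/2}$ --- or is essentially bounded, which is the degenerate case handled below. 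In the former case, summing over $1\le|k|\ll\sqrt N$ (and absorbing the fibre $k=0$, which by the previous paragraph is either a genuine quadratic or constant),
$$
\#\mathcal{R}_F([N,2N])\ \le\ \#\{(x,y)\in\bZ^2:\ N\le F(x,y)\le 2N\}\ \ll\ \sum_{1\le|k|\ll\sqrt N}\frac{\sqrt N}{|k|}+O(1)\ \ll\ \sqrt N\log N,
$$
which is the first alternative with any $\lambda<\tfrac12$.

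The delicate point is the degenerate case in which $\mu$ is bounded, equivalently $\operatorname{disc}_x F=Q^2-4PR$ has low degree and $F$ is \`\`almost a square'' in $x$; there the range of $k$ is no longer controlled and the naive fibre count breaks down. This is precisely the critical configuration flagged in Section~\ref{sec3}, where $F_4,F_3,F_2$ are all of size $x^2+y^2$ (i.e.\ $\delta\approx-1/r$ in that notation), and it is here that the structural hypotheses are used: that $H$ is linear (rather than an indefinite quadratic) together with the assumption that no primitive integral $L\nmid H$ satisfies $L^2\mid G_3$ prevent $F$ from being a translate of a perfect square of a quadratic form carrying a repeated rational linear factor, and more generally they allow one either to locate a linear slice on which $F$ restricts to a non–constant affine map --- whence $\inf_{\bZ^2}F=-\infty$ --- or to recognize $F$ as essentially of the shape $\mathfrak{F}\circ G$ and invoke Lemma~\ref{single}. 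Making this last step airtight, rather than the main fibre count, is the real obstacle, and it is why the algebraic normal form $F=P(y)x^2+Q(y)x+R(y)$, rather than the analytic estimates of Sections~\ref{sec2}--\ref{sec3}, is the right tool for this proposition.
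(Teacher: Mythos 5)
Your opening reduction is exactly the paper's: after sending $H$ to a coordinate, the paper writes $G(u,v)=v^2g_2(u)+vg_1(u)+g_0(u)$ (your $P,Q,R$ in the variable $u=H$), completes the square, and studies the discriminant $\mathfrak{D}(u)=\frac{g_1(u)^2-4g_0(u)g_2(u)}{4g_2(u)}=-\mu$. Your per-fibre bound $\ll \sqrt{N/P(k)}+1$ and the disposal of the at most two fibres with $P(k)=0$ are fine (the paper treats those fibres as an exceptional set $\mathcal{E}$). The trouble is in how you control the set of relevant fibres, and there are two genuine gaps. First, the claim ``since $F$ is bounded below on $\bZ^2$, $\mu$ is bounded below on $\bZ$'' is false: boundedness of $F$ at integer points only yields $\mu(k)\geq -C-P(k)/4$, because the real vertex $x^*(k)=-Q(k)/(2P(k))$ need not lie near an integer, so a narrow negative dip of the fibre parabola can miss $\bZ$ entirely. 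This is precisely the point where the paper works: it writes $g_1=2g_2q+h$ with $q$ an \emph{integral} polynomial of degree $\leq 1$ and substitutes $v=-q(u)$, making the square term $O(1)$ and hence $F=-\mathfrak{D}(u)+O(1)$, so that $\mathfrak{D}\to+\infty$ (i.e.\ $\mu\to-\infty$) forces $\inf_{\bZ^2}F=-\infty$. You have no substitute for this step, and without it the case where $Q^2-4PR$ has odd degree (so $\mu\to-\infty$ in one direction, growing only like $|k|$ or $|k|^3$ in the other) is left with neither alternative: fibres of arbitrarily large $|k|$ may then contain one or two lattice points whose values land in $[N,2N]$, and your sum over $k$ gives no bound of the form $N^{1-\lambda}$.

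Second, the remaining case, $\mu$ bounded, is exactly where you stop, and the tools you gesture at (Lemma \ref{single}, the hypothesis that no $L\nmid H$ has $L^2\mid G_3$, ``recognizing $F$ as $\mathfrak{F}\circ G$'') are not what resolves it. In the paper's scheme this is the \emph{easy} case: one never bounds the range of $k$ at all. Instead, Proposition \ref{2222} is used: any point with $F\leq 2N$ lying outside the narrow angular sectors $\Theta_\xi(B)$ around the zeros of $F_4$ already satisfies $F\gg B^3$, hence $B\ll N^{1/3}$; inside such a sector one has $|u|=|H|\ll B^{1/2}$ while the transverse coordinate is $\asymp B$, so with $\mathfrak{D}$ bounded above and $g_2(u)\geq 1$ the completed square alone gives $F\gg B^2$, hence $B\ll N^{1/2}$ and the point lies in a sector of angular width $\ll B^{-1/2}$; counting lattice points in these thin sectors gives $\ll N^{1-\lambda}$. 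In other words, the configuration you flag as ``the real obstacle'' is handled by the sector decomposition already set up in Section \ref{sec3}, not by extra algebraic structure, while the configuration you treat as routine (unbounded discriminant) is the one that needs the integral-vertex substitution described above. As it stands, your proposal proves the proposition only when $\mu$ is bounded below and grows at least quadratically in both directions, and is incomplete otherwise.
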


\begin{proof}
Let $H(x,y) = a x + b y$. Since $\gcd(a, b) = 1$, there exist $c, d \in \bZ$ such that $a d - b c = 1$. Let us define the matrix
$$
A = \begin{bmatrix}
a   &  b \\
c &  d
\end{bmatrix}.
$$
In particular, if
$$
\begin{bmatrix}
  u \\
  v
\end{bmatrix}
= A
\begin{bmatrix}
  x \\
  y
\end{bmatrix},
$$
then we have $u = H(x,y)$.
Let $G = F \circ A^{-1}$ and denote
\begin{equation}
\label{case2shape1}
G(u, v) = u^2 \mathcal{G}_4(u, v) + u  \mathcal{G}_3(u, v) + \mathcal{G}_2(u, v) + \mathcal{G}_1(u, v),
\end{equation}
where $\mathcal{G}_4, \mathcal{G}_3$ are integral quadratic forms not divisible by $u$, and $\mathcal{G}_2$ and $\mathcal{G}_1$ are integral forms (possibly the zero polynomial) of degrees $2$ and $1$ respectively.
We know that $\mathcal{G}_4$ is positive semi-definite; therefore, either
it is positive definite or a positive integer multiplied by square of a primitive integral linear form.

Since $\mathcal{G}_4$ is positive semi-definite, it follows that its $v^2$-coefficient is positive (it can not be zero because $u \nmid \mathcal{G}_4$).
Therefore, we may rewrite (\ref{case2shape1}) as
\begin{equation} \label{case2shape2} G(u,v) = v^2 g_2(u) + v g_1(u) + g_0(u), \end{equation}
where $g_2, g_1, g_0$ are integral polynomials in $u$. Furthermore, the $u^2$-coefficient of $g_2$ is positive.
Let us denote
$$
\mathfrak{D}(u) =   \frac{g_1(u)^2 - 4 g_0(u) g_2(u)}{4 g_2(u)}.
$$
By completing the square, we obtain the expression
\begin{eqnarray}
\label{sq1'}
G(u, v) = g_2(u) \left(v +  q(u)  + \frac{h(u)}{2g_2(u)} \right)^2 - \mathfrak{D}(u),
\end{eqnarray}
where $q$ and $h$ are integral polynomials such that
$g_1(u) = 2 g_2(u) q(u) + h(u)$ and $\deg h \leq 1$. Furthermore, since $\deg g_1 \leq 3$, it follows that $\deg q \leq 1$.

We consider two cases based on whether the set
\begin{eqnarray}
\notag
\mathcal{S} = \{ \mathfrak{D}(u): u \in \bZ,  g_2(u) > 0 \}
\end{eqnarray}
is bounded or not.

Let $\xi \in \Xi$ be such that $H(\cos(\xi), \sin (\xi))$ = 0. For each $\theta \in \Theta_\xi(B)$, we let
\begin{eqnarray}
\notag
\begin{bmatrix}
 u_\theta \\
 v_\theta
\end{bmatrix}
=
A
\begin{bmatrix}
B \cos (\theta)  \\
B \sin (\theta)
\end{bmatrix}
=
B A \begin{bmatrix}
 \cos (\xi) + (\cos (\theta) - \cos(\xi)) \\
 \sin (\xi) + (\sin (\theta) - \sin(\xi))
\end{bmatrix}
=
 \begin{bmatrix}
 O(B^{\frac{1}{2}}) \\
B ( c \cos(\xi) + d \sin (\xi) )   +  O(B^{\frac{1}{2}})
\end{bmatrix},
\end{eqnarray}
where $ c \cos(\xi) + d \sin (\xi) \neq 0$.
Since $A \in \GL_2(\bZ)$, we have that $(u_\theta, v_\theta) \in  \bZ^2$ if and only if $(B \cos (\theta), B \sin (\theta) )   \in  \bZ^2$.

Let us suppose $\mathcal{S}$ is bounded.
Then
\begin{eqnarray}
\notag
G(u, v) = g_2(u) \left( v +  q(u)  + \frac{h(u)}{2g_2(u)} \right)^2 + O(1).
\end{eqnarray}
Therefore, there exists $C_0 > 0$ such that
\begin{eqnarray}
\label{sq1+}
F(B \cos (\theta), B \sin (\theta) )  =  G(u_\theta, v_\theta) > C_0  (B + B^{\frac12})^2  >  C_0 B^2,
\end{eqnarray}
for  $(u_\theta, v_\theta) \in  \bZ^2$ with $\theta \in \Theta_\xi(B)$ such that
$g_2(u_\theta) > 0$. It is easy to see that for $w \in \bZ$ satisfying $g_2(w) = 0$, if it exists, we have
\begin{eqnarray}
\label{w}
\#  \{ F(x,y) :  w = ax + by, (x, y) \in \bZ^2   \} =  \#  \{ G(w, v) :  v \in \bZ   \}   \ll 1.
\end{eqnarray}
We will be making use of Proposition \ref{2222} with
$$
\mathcal{E} = \bigcup_{ \substack{ w \in \bZ  \\ g_2(w) = 0 } }  \{ (x, y) \in \bZ^2: w =  ax + by  \}.
$$
If there exists $w \in \bZ$ such that $g_2(w) < 0$,  then we easily obtain
\begin{eqnarray}
\label{neg}
\inf_{(x, y) \in \bZ^2} F(x, y) = \inf_{(u, v) \in \bZ^2} G(u, v) = - \infty.
\end{eqnarray}

On the other hand, let us now suppose $\mathcal{S}$ is unbounded.
In this case, we have
$$
\deg (g_1^2 - 4 g_0 g_2) > 2.
$$
First we assume
\[\lim_{u \rightarrow \infty} \D(u) =  \infty.  \]
Let us recall that $q$ is an integral polynomial and set $v = - q(u)$. Then (\ref{sq1'}) becomes
$$
G(u, v) = g_2(u) \left( \frac{r(u)}{2g_2(u)} \right)^2 - \mathfrak{D}(u) = - \mathfrak{D}(u) + O(1),
$$
provided $|u|$ is sufficiently large, since $\deg r \leq 1$ and $\deg g_2 = 2$; therefore, we obtain (\ref{neg}).
Next we assume
\[\lim_{ u \rightarrow \infty} \D(u) = - \infty. \]
If $\deg (g_1^2 - 4 g_0g_2)$ is odd, then
\[\lim_{u \rightarrow -\infty} \D(u) =  \infty,\] and the same analysis from the previous case implies (\ref{neg}).
On the other hand, if $\deg (g_1^2 - 4 g_0 g_2)$ is even, then
the set $\{u \in \bR:  \D(u) > 0 \}$ is bounded. Let
$$
\mathfrak{m} = \min \left\{ 0,  \,  \max_{u \in \bR} \D(u)  \right\}.
$$
Therefore, from (\ref{sq1'}) it follows that
$$
G(u_\theta, v_\theta ) \geq  g_2(u_\theta) \left( v_\theta +  q(u_\theta)  + \frac{r(u_\theta)}{2g_2(u_\theta)} \right)^2 - \mathfrak{m},
$$
and by the same argument as above in the case $\mathcal{S}$ is bounded, we also obtain (\ref{sq1+}), (\ref{w}) and (\ref{neg}).
Therefore, this completes proof of the case when $\mathcal{G}_4$ is positive definite (In this case, $\# \Xi = 2$ which corresponds to the zeros of $H$.).
If $G_4 = a_4 L^2$ with $a_4 \in \bN$ and $L$ a primitive integral linear form such that $L \nmid H$, then we are also done if $L \nmid G_3$ (In this case, we still have $\# \Xi = 2$.). On the other hand, if $L | G_3$, then we may repeat the above argument to obtain the desired conclusion (In this case, $\# \Xi = 4$ which corresponds to the zeros of $H$ and $L$.).
\end{proof}

\begin{proposition}
\label{second}
Let
$$
F(x, y) = a_4 H (x, y)^4  + H (x, y)^2 G_{3}(x, y) + F_2 (x,y) + F_1(x,y),
$$
where $H$ is a primitive integral linear form, $a_4 \in \bN$ and $G_3$ is an integral linear form such that $H \nmid G_3$ and
no further assumptions on $F_2$ and $F_1$ (In particular, this contains $F$ as in (\ref{diffF2}) with $H$ a linear form and $r = 2$.).
Then we have either
$$
\# \mathcal{R}_F([N, 2N]) \ll \frac{N}{ \sqrt{\log N} }
\quad
\textnormal{or}
\quad
\inf_{(x, y) \in \bZ^2 } F(x, y) = - \infty.
$$
\end{proposition}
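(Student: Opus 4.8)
The plan is to reduce $F$ to the normal form used in the proof of Proposition~\ref{first} and then exploit that the leading part is $a_4H^4$ with $a_4$ \emph{constant}, which makes $G=F\circ A^{-1}$ quadratic in one of the new variables. Writing $H(x,y)=ax+by$ with $\gcd(a,b)=1$, choose $c,d\in\bZ$ with $ad-bc=1$, let $A\in\GL_2(\bZ)$ be the matrix with rows $(a,b)$ and $(c,d)$, and set $G=F\circ A^{-1}$, so that the first new coordinate $u$ equals $H(x,y)$ and $F(\bZ^2)=G(\bZ^2)$. Since the degree-$4$ part $a_4H^4$ becomes $a_4u^4$, the degree-$3$ part $H^2G_3$ becomes $u^2\cdot(G_3\circ A^{-1})$ with $G_3\circ A^{-1}$ linear, and $F_2,F_1$ become a binary quadratic and a linear form, collecting powers of the second variable gives
\[
G(u,v)=g_2\,v^2+g_1(u)\,v+g_0(u),
\]
where $g_2\in\bZ$, $g_0\in\bZ[u]$ has degree $\le 4$ with $u^4$-coefficient $a_4$, and $g_1\in\bZ[u]$ has degree exactly $2$; its leading coefficient $\beta$ is the $v$-coefficient of $G_3\circ A^{-1}$, which is nonzero precisely because $H\nmid G_3$. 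It suffices to prove the dichotomy for $G$.

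I would first dispose of $g_2\le 0$. If $g_2<0$ then $G(u,v)\to-\infty$ as $v\to\infty$ for any fixed $u$, so $\inf_{(x,y)\in\bZ^2}F=-\infty$. If $g_2=0$ then, since $g_1$ is quadratic with nonzero leading coefficient, $g_1(u)\neq0$ for all but at most two integers $u$, and for every such $u$ the map $v\mapsto G(u,v)=g_1(u)v+g_0(u)$ is a nonconstant affine function of $v$, hence unbounded below; again $\inf F=-\infty$. So assume $g_2>0$. Clearing denominators in the completion of the square,
\[
4g_2\,G(u,v)=\bigl(2g_2v+g_1(u)\bigr)^2+P(u),\qquad P(u):=4g_2\,g_0(u)-g_1(u)^2\in\bZ[u],
\]
an integral polynomial of degree $\le4$ with top coefficient $4g_2a_4-\beta^2$.

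Now I run the usual dichotomy on $P$. If $\deg P$ is odd, or $\deg P\in\{2,4\}$ with negative leading coefficient, then $P(u)\to-\infty$ along a sequence of integers $u$; choosing for each such $u$ an integer $v$ with $|2g_2v+g_1(u)|\le g_2$ (possible since consecutive values of $2g_2v$ differ by $2g_2$) yields $4g_2G(u,v)\le g_2^2+P(u)\to-\infty$, so $\inf F=-\infty$. In the complementary case $P$ is bounded below, and I bound $\#\mathcal{R}_F([N,2N])=\#\bigl(G(\bZ^2)\cap[N,2N]\bigr)$ by cases on $\deg P$. If $\deg P\le 0$, then $4g_2G(u,v)=W^2+g$ with $g\in\bZ$ and $W=2g_2v+g_1(u)\in\bZ$, so a value in $[N,2N]$ forces $W^2$ into a fixed interval of length $\asymp N$, whence $\#\mathcal{R}_F([N,2N])\ll\sqrt N$. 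If $\deg P=4$ (then necessarily with positive leading coefficient), then $P(u)\asymp u^4$ for large $|u|$, so $N\le G(u,v)$ forces $|u|\ll N^{1/4}$, while $(2g_2v+g_1(u))^2\le 4g_2G(u,v)-P(u)\ll N$ leaves $\ll\sqrt N$ admissible $v$ for each $u$; hence $\#\mathcal{R}_F([N,2N])\ll N^{3/4}\ll N/\sqrt{\log N}$.

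The remaining, and main, case is $\deg P=2$ with positive leading coefficient, say $P(u)=eu^2+fu+g$ with $e\ge1$. Here $G(\bZ^2)\subseteq \tfrac{1}{4g_2}\{\,W^2+P(u):(W,u)\in\bZ^2\,\}$, and the quadratic part $W^2+eu^2$ of $W^2+P(u)$ is positive definite; completing the square in $u$ and rescaling (exactly the reduction used for general quadratic polynomials) shows that the values of $W^2+P(u)$ lying in $[4g_2N,8g_2N]$ are contained in the set of integers in an interval of length $\asymp N$ represented by a fixed positive definite primitive integral binary quadratic form. By the classical Landau--Bernays estimate for such representation counts this number is $\ll N/\sqrt{\log N}$, so $\#\mathcal{R}_F([N,2N])\ll N/\sqrt{\log N}$, completing the dichotomy. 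The only nonelementary ingredient, and the one case where crude lattice-point counting falls short of the asserted bound, is this $\deg P=2$ case and its appeal to the Landau--Bernays estimate; everything else is routine bookkeeping with the quadratic-in-$v$ normal form.
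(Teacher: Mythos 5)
Your proof is correct and follows essentially the same route as the paper: the same $\GL_2(\bZ)$ substitution $u=H(x,y)$ that makes $F$ quadratic in the remaining variable, completion of the square, a case analysis on the degree and sign of the discriminant polynomial (your $P$ is $-4g_2\mathfrak{D}$ in the paper's notation), and the Landau--Bernays estimate in the critical degree-$2$ case. The only divergence is the degree-$4$ case, where you give a direct lattice-point count of order $N^{3/4}$ instead of invoking the paper's sector estimate (Proposition \ref{2222}); this works, with the trivial remark that it is the upper constraint $G(u,v)\le 2N$, rather than $N\le G(u,v)$, that forces $|u|\ll N^{1/4}$.
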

\begin{proof}
By applying a $\GL_2(\bZ)$-change of variables if necessary, we may assume $F$ is of the form
\begin{equation}
\label{case2shape}
F(x,y) = a_4 x^4  + a_3 x^2 (c x + d y) + F_2(x,y) + F_1(x,y),
\end{equation}
where  $a_3, d \in \bZ_{\neq 0}$ and $c \in \bZ$.  In particular, we have $\Xi = \{ \pi/2, 3\pi/2 \}$ in the notation of Section \ref{sec3}.

Our goal is to establish
\begin{eqnarray}
\label{somebound}
\# \mathcal{R}_F([N, 2N]) \ll  \frac{N}{ \sqrt{\log N} },
\end{eqnarray}
by showing (\ref{LOWER}) holds for each $\xi \in \Xi$ or other means,
unless
\begin{eqnarray}
\label{ineqq}
\inf_{(x, y) \in \bZ^2} F(x, y) = - \infty.
\end{eqnarray}

Let us rewrite (\ref{case2shape}) as
\begin{equation} \label{case2shape2} F(x,y) = b y^2  + y g_1(x) + g_0(x), \end{equation}
where $b \in \bZ$ and $g_1, g_0$ are integral polynomials in $x$. In particular, we have $\deg g_1 = 2$.
If $b \leq 0$, then we easily obtain (\ref{ineqq}) by setting $x = x_0$ where $g_1(x_0) \neq 0$. Therefore, we assume $b > 0$ for the remainder of the proof.

Let us denote
$$
\mathfrak{D}(x) =   \frac{g_1(x)^2 - 4 g_0(x) b}{4 b}.
$$
By completing the square, we obtain the following expression
\begin{eqnarray}
\notag
F(x,y) = b \left(y + \frac{g_1(x)}{2 b} \right)^2 - \mathfrak{D}(x).
\end{eqnarray}

We consider three cases based on $\deg \mathfrak{D}$.
We begin with the case $\deg \mathfrak{D} = 0$, i.e. $\mathfrak{D}(x) \equiv C_0$ for some $C_0 \in \bQ$.
Then it follows from Lemma \ref{single} that
$$
\# \mathcal{R}_F([N, 2N] ) \ll \sqrt{N}.
$$

Next we suppose $\deg \mathfrak{D}$ is odd. Then by setting $y = -  \lfloor \frac{g_1(x)}{2 b} \rfloor$ we have
$$
F(x,y) =  - \mathfrak{D}(x) + O(1).
$$
Therefore, it is easy to see that (\ref{ineqq}) holds in this case.

Finally, we suppose $\deg \mathfrak{D}$ is even, that is $\deg \mathfrak{D} \in \{2,4\}$. If
$$
\lim_{|x| \rightarrow \infty  } \mathfrak{D}(x) = \infty,
$$
then by setting $y = -  \lfloor \frac{g_1(x)}{2 b} \rfloor$ again we obtain (\ref{ineqq}). Therefore, we assume
$$
\lim_{|x| \rightarrow \infty  } \mathfrak{D}(x) = - \infty.
$$
First we assume $\deg \mathfrak{D} = 2$. Let us denote
$$
\mathfrak{D}(x) = \frac{ e x^2 + f x + g }{4 b}  = \frac{e}{4b} \left( x +  \frac{f}{2 e} \right)^2 - C_0,
$$
where $e \in \bZ_{< 0}$, $f, g \in \bZ$ and
$$
C_0 = \frac{ f^2 - 4 e g }{16 b e}.
$$
Therefore, we obtain
\begin{eqnarray}
\notag
F(x,y) &=& \frac{(2 b y + g_1(x) )^2}{4b}  -  \frac{( 2 e  x  + f)^2}{16 b  e }  + C_0
\\
&=&
\notag
 \frac{(2 b y + g_1(x) )^2}{4b}  +  \frac{( 2 e  x  + f)^2}{16  b  |e| }  + C_0
\\
&=&
\notag
\frac{ |e| (4 b y + 2 g_1(x) )^2 +  (2 e x + f)^2      }{16  b  |e| } +  C_0.
\end{eqnarray}
Let us define
$$
Q(u, v) =  |e| u^2  + v^2.
$$
It follows that
$$
\{ F(x,y):  (x, y) \in \bZ^2  \} \subseteq  \frac{1}{16 b |e|} \{ Q(u, v) +  16 b |e| C_0:  (u, v) \in \bZ^2   \}.
$$
Therefore, by Theorem \ref{Bar} we obtain
$$
\# \mathcal{R}_F([N, 2N]) \ll \frac{N}{ \sqrt{\log N} }.
$$

Next we assume $\deg \mathfrak{D} = 4$.
Clearly, we have
$$
F(x, y) >  \frac{1}{4b}  \max \{  (2b y + g_1(x))^2,  |\mathfrak{D}(x)|   \},
$$
for $|x|$ sufficiently large.
Since $\deg g_1 = 2$, given $y = \pm B + O(B^{3/4} )$, there exist $C_2 > C_1 > 0$ such that
$$
F(x, y) \gg
\begin{cases}
   y^2  & \mbox{if }  |x| \leq  C_1 \sqrt{B}, \\
   |\mathfrak{D}(x)| & \mbox{if } C_1 \sqrt{B} < |x| < C_2 \sqrt{B}, \\
   g_1(x)^2  & \mbox{if } |x| \geq C_2 \sqrt{B}.
\end{cases}
$$
Let $\xi \in \Xi$. Then we have
\begin{eqnarray}
\label{cossin+}
\cos (\theta) = O(B^{- \frac{1}{4}}) \quad \textnormal{and} \quad \sin (\theta) = \sin (\xi) + O(B^{-\frac{1}{4}})
\end{eqnarray}
for $\theta \in \Theta_\xi(B)$.
Therefore, it follows that there exists $C_3 > 0$ satisfying
$$
F(B \cos (\theta), B \sin (\theta)) > C_3 B^2
$$
for all $\theta \in \Theta_\xi(B)$, which implies (\ref{LOWER}). Since this estimate holds for all $\xi \in \Xi$, we obtain
from Proposition \ref{2222} that (\ref{somebound}) holds.

\end{proof}
	
\section{Quadratically composed polynomials with $H$ quadratic}
\label{sec5}
	
In this section, we treat quadratically composed polynomials with $H$ an indefinite irreducible quadratic form.
	
\begin{lemma} \label{quadpoly} Let $Q \in \bQ[x,y]$ be a quadratic polynomial. Let $Q_2$ be the degree $2$ homogeneous part of $Q$,
and suppose that $Q_2$ is an integral quadratic form with non-zero discriminant. Then there exist $q_1, q_2, q_3 \in \bQ$, depending only on $Q$, such that
\[ Q(x,y) = Q_2(x + q_1, y + q_2) + q_3.\]
\end{lemma}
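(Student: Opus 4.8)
The plan is to complete the square, using the non-degeneracy of $Q_2$ to solve for the required shift. First I would decompose $Q$ by degree, writing
$$
Q(x,y) = Q_2(x,y) + \ell(x,y) + q_0,
$$
where $\ell(x,y) = dx + ey$ is the homogeneous linear part of $Q$ and $q_0 = Q(0,0) \in \bQ$; here $d, e \in \bQ$. This step is just collecting terms.

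Next, write $Q_2(x,y) = ax^2 + bxy + cy^2$ with $a,b,c \in \bZ$ and expand the shifted form:
$$
Q_2(x+q_1,\, y+q_2) = Q_2(x,y) + (2aq_1 + bq_2)\,x + (bq_1 + 2cq_2)\,y + Q_2(q_1,q_2).
$$
Matching $Q$ against $Q_2(x+q_1,y+q_2) + q_3$ therefore reduces to solving the linear system
$$
\begin{bmatrix} 2a & b \\ b & 2c \end{bmatrix}
\begin{bmatrix} q_1 \\ q_2 \end{bmatrix}
=
\begin{bmatrix} d \\ e \end{bmatrix}
$$
for $(q_1,q_2)$, and then setting $q_3 = q_0 - Q_2(q_1,q_2)$.

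The only place the hypothesis enters is that the determinant of the coefficient matrix is $4ac - b^2 = -\Disc(Q_2)$ (up to the usual sign in the normalization of the discriminant of a binary quadratic form), which is nonzero by assumption. Hence the matrix is invertible over $\bQ$, the system has a unique solution $(q_1,q_2) \in \bQ^2$, and $q_1, q_2, q_3$ are rational functions of the coefficients of $Q$ — in particular they depend only on $Q$. Substituting back yields the claimed identity. There is no genuine obstacle here: the entire content is that a binary quadratic form with nonzero discriminant has an invertible ``gradient map'' on $\bQ^2$, so the linear part of $Q$ can always be absorbed into a rational translation of $Q_2$.
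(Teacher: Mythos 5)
Your proposal is correct and follows essentially the same route as the paper: expand $Q_2(x+q_1,y+q_2)$, match the linear coefficients via the system with matrix $\begin{bmatrix} 2a & b \\ b & 2c \end{bmatrix}$, invert it using $b^2-4ac\neq 0$, and absorb the constant into $q_3$. No gaps.
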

	
\begin{proof} Let us write
\[Q(x,y) = a x^2 + b xy + c y^2 + d x + e y + f,\]
where $a,b,c \in \bZ$ with $b^2 - 4 ac \neq 0$ and  $d,e,f \in \bQ$.
Let $q_1, q_2 \in \bQ$ to be determined in due course.
Since
\begin{align*} Q_2(x + q_1, y + q_2)  & = a(x + q_1)^2 + b(x +q_1)(y+q_2) + c(y+ q_2)^2  \\
	& = ax^2 + bxy + cy^2 + (2a q_1 + bq_2) x + (b q_1 + 2c q_2)y + aq_1^2 + bq_1 q_2 + cq_2^2,
	\end{align*}
we set
$$
q_3 = f - (aq_1^2 + bq_1 q_2 + cq_2^2).
$$
Therefore, we obtain
\[ Q(x,y) = Q_2(x + q_1, y + q_2) + q_3\]
provided we choose $q_1$ and $q_2$ to satisfy
\[
\begin{bmatrix} d \\ e \end{bmatrix}
=
\begin{bmatrix} 2a & b \\ b & 2c \end{bmatrix} \begin{bmatrix} q_1 \\ q_2 \end{bmatrix}.
\]
Since the determinant of the matrix is non-zero, this system can be solved uniquely as follows
\[\begin{bmatrix} q_1 \\ q_2 \end{bmatrix} = \frac{-1}{b^2 - 4ac} \begin{bmatrix} 2c & -b \\ -b & 2a \end{bmatrix} \begin{bmatrix} d \\ e \end{bmatrix}.  \]
Finally, it is clear that $q_1, q_2, q_3 \in \bQ$.
\end{proof}

We now answer Question \ref{QPoon} for quadratic polynomials. In order to achieve this, we will also need the following estimate by Bernays \cite{Ber} (for the statement of the result in English, see for example \cite{BG, MO}),
which is a generalization of the well-known result by Landau \cite{La}.
\begin{theorem}[Bernays] \label{Bar}
Suppose $Q_2 \in \bZ[x,y]$ is a positive definite primitive integral quadratic form.
Then there exists $C(Q_2) > 0$, depending only on the discriminant of $Q_2$, such that
$$
\# \mathcal{R}_{Q_2}([1, N]) \sim C(Q_2) \frac{N}{\sqrt{\log N}}.
$$
\end{theorem}

\begin{corollary} \label{quadpo} Let $Q \in \bQ[x,y]$ be a quadratic polynomial. Then we have either
$$
\# \mathcal{R}_Q([N, 2N]) \ll \frac{N}{\sqrt{\log N}}
\quad
\textnormal{or}
\quad
\inf_{(x, y) \in \bZ^2} Q(x, y) = - \infty.
$$
\end{corollary}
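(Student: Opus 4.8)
The plan is to reduce immediately to the case of an integral quadratic polynomial $Q(x,y) = Q_2(x,y) + dx + ey$ with $Q(0,0) = 0$, since clearing denominators and translating does not affect whether $\inf Q = -\infty$ nor (up to $\asymp$) the count $\#\mathcal{R}_Q([N,2N])$. I would then split on the nature of the degree-$2$ part $Q_2$. First, if $Q_2$ is negative semi-definite or indefinite, then there is $(x_0,y_0) \in \bZ^2$ with $Q_2(x_0,y_0) < 0$, and since $Q(Nx_0, Ny_0) = N^2 Q_2(x_0,y_0) + O(N) \to -\infty$, the second alternative holds. So one may assume $Q_2$ is positive semi-definite.

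Next, within the positive semi-definite case, handle the degenerate sub-case where $Q_2$ is not positive definite: then $Q_2 = aL^2$ for a positive integer $a$ and a primitive integral linear form $L$ (this is the elementary fact referenced just before Lemma \ref{possemi}). Applying a $\GL_2(\bZ)$-change of variables to make $L = x$, we get $Q(x,y) = ax^2 + d'x + e'y$ for integers $d', e'$. If $e' \neq 0$, setting $x = 0$ gives $Q(0,y) = e'y$, so $\inf Q = -\infty$. If $e' = 0$, then $Q$ depends only on $x$ and $\#\mathcal{R}_Q([N,2N]) \ll \sqrt{N}$ (each value is attained by an $x$ with $|x| \ll \sqrt{N}$), which is certainly $\ll N/\sqrt{\log N}$.

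The remaining case is $Q_2$ positive definite, i.e. $b^2 - 4ac < 0$ with $a, c > 0$ (writing $Q_2 = ax^2 + bxy + cy^2$). Here the discriminant of $Q_2$ is non-zero, so Lemma \ref{quadpoly} applies: there exist rationals $q_1, q_2, q_3$ with $Q(x,y) = Q_2(x+q_1, y+q_2) + q_3$. Completing the square in $Q_2$ as in the (commented-out) computation — writing $Q_2(X,Y) = \frac{(2aX + bY)^2}{4a} + \frac{\Delta Y^2}{4a}$ with $\Delta = 4ac - b^2 > 0$ — and clearing denominators shows that for a suitable positive integer $\kappa$ (a multiple of $4a\Delta$ and the denominators of the $q_i$), the set $\kappa \cdot (Q(\bZ^2) - q_3)$ is contained in $\{\mathcal{D}(u,v) : (u,v) \in \bZ^2\}$ where $\mathcal{D}(u,v) = \Delta u^2 + v^2$ is a positive definite integral quadratic form. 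Since $\mathcal{D}$ is a positive definite binary quadratic form, Theorem \ref{Bar} (Bernays) gives $\#\mathcal{R}_{\mathcal{D}}([1,M]) \ll M/\sqrt{\log M}$, and feeding $M \asymp N$ back through the affine-linear relation between the value sets yields $\#\mathcal{R}_Q([N,2N]) \ll N/\sqrt{\log N}$, as desired.

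The only genuinely non-routine ingredient is Theorem \ref{Bar}, which is quoted; everything else is elementary case analysis and completing the square. The one point requiring a little care is bookkeeping the passage from $Q_2$ to a \emph{primitive} integral form so that Bernays' theorem applies cleanly — one may need to divide $\mathcal{D}$ by the gcd of its coefficients first, which only improves the bound — and checking that the affine rescaling between $Q(\bZ^2)$ and the value set of $\mathcal{D}$ changes the counting function by at most a bounded factor, so the $N/\sqrt{\log N}$ bound survives. I expect this is where the most attention is needed, though it is still straightforward.
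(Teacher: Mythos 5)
Your proposal is correct and follows essentially the same route as the paper: the same trichotomy on $Q_2$ (negative semi-definite/indefinite, degenerate positive semi-definite via $Q_2 = aL^2$, positive definite via Lemma \ref{quadpoly} and Bernays). The only difference is that in the positive definite case you diagonalize $Q_2$ into $\Delta u^2 + v^2$ before invoking Theorem \ref{Bar}, whereas the paper applies Theorem \ref{Bar} to $Q_2$ itself after clearing denominators; this is an immaterial variation.
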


\begin{proof}
For the purpose of this proof, we may assume without loss of generality that
$Q$ is an integral quadratic polynomial with $Q(0,0) = 0$.
Let $Q_2$ be the degree $2$ homogeneous part of $Q$.
Suppose $Q_2$ is negative semi-definite or indefinite. Then there exists
$(x_0, y_0) \in \bZ^2$ such that $Q_2(x_0, y_0) < 0$.
Since
$$
Q(N x_0, N y_0) = N^2 Q_2(x_0, y_0) + O(N),
$$
it follows that
\begin{eqnarray}
\label{infQ}
\inf_{(x, y) \in \bZ^2} Q(x, y) = - \infty.
\end{eqnarray}
Therefore, we only need to consider when $Q_2$ is positive semi-definite. Suppose $Q_2$ is not positive definite, in which case it follows\footnote{This is the same idea as Lemma \ref{possemi}.} that  $Q_2 = a L^2$ for some $a \in \bN$ and $L$ a primitive integral linear form. By applying a $\GL_2(\bZ)$-change of variables if necessary, we may assume that $Q$ is of the form
$$
Q(x, y) = a x^2 + d x + e y,
$$
where $d, e \in \bZ$. If $e \neq 0$, then we easily obtain (\ref{infQ}) by setting $x = 0$. On the other hand, if $e = 0$, then the result follows by Lemma \ref{single}.

Finally, we suppose $Q_2$ is positive definite. Then by Lemma \ref{quadpoly} we obtain
$$
Q(x,y) =  \frac{1}{d^2} Q_2( d x + m_1, d x + m_2  ) + q_3,
$$
where $d \in \bN$, $m_1, m_2 \in \bZ$ and $q_3 \in \bQ$. Therefore, it follows that
$$
\# \mathcal{R}_Q([N, 2N]) \ll \# \mathcal{R}_{Q_2}([ d^2(N - q_3) , d^2(2N - q_3)]) \ll \frac{N}{\sqrt{\log N}},
$$
where the second inequality follows from Theorem \ref{Bar}.
\end{proof}

\begin{lemma}
\label{Dirlem}
Let $H$ be an indefinite irreducible integral quadratic form. Let $\xi \in \mathbb{T}$ be
such that
$$
H(\cos(\xi), \sin (\xi)) = 0.
$$
Let $m_1, m_2 \in \bZ$ and $d \in \bN$.
Then there exists an infinite sequence of pairs of integers $ \{(x_i, y_i)\}_{i=1}^{\infty}$ such that
$$
| H(d x_i + m_1, d y_i + m_2) | \ll 1
$$
and
$$
| \arg (x_i, y_i) - \xi | \ll \frac{1}{ \sqrt{x_i^2 + y_i^2} }.
$$
\end{lemma}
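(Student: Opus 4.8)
The plan is to exploit the fact that an indefinite, $\bQ$-irreducible integral binary quadratic form has an infinite group of automorphs, and to combine a single orbit of this group with a congruence argument that absorbs the shift by $(m_1,m_2)$ and the dilation by $d$. Write $H(x,y) = ax^2 + bxy + cy^2$; since $H$ is indefinite and irreducible over $\bQ$, its discriminant $D = b^2 - 4ac$ is positive and not a perfect square, so the Pell-type equation $t^2 - Du^2 = 4$ has infinitely many integer solutions and the group $\{\gamma \in \SL_2(\bZ) : H \circ \gamma = H\}$ is infinite. It therefore contains a hyperbolic element $\gamma_0$ with positive eigenvalues $\mu_0 > 1 > \mu_0^{-1}$; in eigencoordinates $\gamma_0$ is diagonal, and a quadratic form it fixes must be a multiple of the cross-term, so the two eigenlines of $\gamma_0$ are exactly the two real null lines of $H$. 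Relabel so that $\bR(\cos\xi, \sin\xi)$ is the $\mu_0$-eigenline. Since $\SL_2(\bZ/d\bZ)$ is finite, $\gamma_0$ has finite order $e \geq 1$ modulo $d$, and $\gamma_1 := \gamma_0^{\,e}$ satisfies $\gamma_1 \equiv I \pmod d$ while remaining hyperbolic, with eigenvalue $\mu_1 := \mu_0^{\,e} > 1$ on $\bR(\cos\xi,\sin\xi)$ and $\mu_1^{-1}$ on the other null line.

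Next I would choose a base point $P \in \bZ^2$. The coset $(m_1,m_2) + d\bZ^2$ is a full-rank affine lattice, hence relatively dense, so it contains points lying arbitrarily far out in any fixed open sector cut out by the two null lines; moreover any nonzero integer point lies off both null lines, because $H$ has no nonzero rational zeros. Writing $P = \alpha v_+ + \beta v_-$ in an eigenbasis of $\gamma_1$ (so $\alpha, \beta \neq 0$), the forward iterates satisfy $\gamma_1^{\,i}P / \mu_1^{\,i} \to \alpha v_+$, hence $\arg(\gamma_1^{\,i}P) \to \arg(\sgn(\alpha)\,v_+)$, which equals $\xi$ or $\xi + \pi$ according to the sign of $\alpha$, i.e. according to which side of the other null line $P$ lies on. I would pick $P$ nonzero in the coset, on the side making this limit equal to $\xi$; this choice of sector is the only delicate point of the argument. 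Put $k := H(P)$, a fixed nonzero constant with $|k| \ll 1$, and note $H(\gamma_1^{\,i}P) = k$ for all $i$.

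Finally, set $(x_i', y_i') := \gamma_1^{\,i}P$ for $i \geq 1$, so that $\sqrt{(x_i')^2 + (y_i')^2} \asymp \mu_1^{\,i} \to \infty$ and, from $\gamma_1^{\,i}P = \alpha\mu_1^{\,i}v_+ + \beta\mu_1^{-i}v_-$, one gets $|\arg(x_i',y_i') - \xi| \ll \mu_1^{-2i} \ll \big((x_i')^2 + (y_i')^2\big)^{-1}$. Put $x_i := (x_i' - m_1)/d$ and $y_i := (y_i' - m_2)/d$, which lie in $\bZ$ since $(x_i', y_i') \equiv (m_1,m_2) \pmod d$. Then $H(d x_i + m_1, d y_i + m_2) = H(x_i', y_i') = k$, so $|H(d x_i + m_1, d y_i + m_2)| \ll 1$; and since $(x_i,y_i)$ differs from $\tfrac1d(x_i', y_i')$ by the bounded vector $-\tfrac1d(m_1,m_2)$, we have $\sqrt{x_i^2 + y_i^2} \asymp \sqrt{(x_i')^2 + (y_i')^2}$ and $|\arg(x_i,y_i) - \arg(x_i',y_i')| \ll 1/\sqrt{x_i^2 + y_i^2}$, whence $|\arg(x_i,y_i) - \xi| \ll 1/\sqrt{x_i^2 + y_i^2}$, as required. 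The sequence $\{(x_i,y_i)\}_{i \geq 1}$ is infinite because the $\gamma_1$-orbit of $P$ is. I expect the main obstacle to be essentially bookkeeping: ensuring the orbit accumulates on the prescribed direction $\xi$ and not its antipode $\xi + \pi$, which forces the careful choice of the sign of $\alpha$ (equivalently, of the sector containing $P$); the arithmetic input, namely the infinitude of the automorph group of an indefinite irreducible form, is classical.
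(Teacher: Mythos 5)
Your proof is correct, but it takes a genuinely different route from the paper. The paper handles the congruence conditions by invoking a version of Dirichlet's theorem on Diophantine approximation in residue classes (due to Elsner): it approximates the irrational slope $\tan(\xi)$ by fractions $(dy_i+m_2)/(dx_i+m_1)$, which makes the linear factor of $H$ vanishing at $\xi$ of size $O(1/|dx_i+m_1|)$ and hence $|H(dx_i+m_1,dy_i+m_2)| \ll 1$, with the angular estimate then following from the mean value theorem. You instead exploit the quadratic-form structure itself: the Pell equation $t^2-Du^2=4$ gives an infinite automorph group, a hyperbolic automorph has the two null lines of $H$ as its eigenlines, and passing to the power $\gamma_1=\gamma_0^{e}\equiv I \pmod d$ keeps the orbit of a well-chosen $P\in(m_1,m_2)+d\bZ^2$ inside the required residue class while $H$ stays exactly constant on the orbit. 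Your sector/sign bookkeeping (choosing $\alpha>0$ so the orbit accumulates at $\xi$ rather than $\xi+\pi$, which is legitimate since the coset is relatively dense and no nonzero integer point lies on a null line of an irreducible form) is handled correctly, and your directional estimate $\ll \mu_1^{-2i} \ll (x_i^2+y_i^2)^{-1}$ is in fact stronger than the $\ll (x_i^2+y_i^2)^{-1/2}$ demanded. What each approach buys: yours is self-contained modulo classical Pell theory, avoids the citation of Elsner's congruence-restricted Dirichlet theorem, and yields exact constancy of $H$ plus quadratic angular decay; the paper's argument is shorter given the cited result and is more flexible, since it only uses irrationality of the direction $\xi$ and does not depend on $H$ having an infinite automorph group, so it would adapt to situations not tied to binary quadratic forms.
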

\begin{proof}
We shall make use of the following version of Dirichlet's theorem on Diophantine approximation \cite{El}.
Let $m_1, m_2 \in \bZ$ and $d \in \bN$.
Given any irrational $\alpha$, there exist infinitely many $(x_i, y_i) \in \bZ^2$ satisfying
\begin{eqnarray}
\label{DD}
\left| \alpha - \frac{d y_i + m_2}{d x_i + m_1}\right| \leq  \frac{2d^2}{(d x_i + m_1)^2}.
\end{eqnarray}
We apply this result with $\alpha = \sin (\xi)/\cos (\xi)$. As a result, we obtain
$$
|\sin (\xi) (d x_i + m_1) - \cos (\xi) (d y_i + m_2)| \ll \frac{1}{ |d x_i + m_1| }.
$$
It then follows that
$$
| H( d x_i + m_1, d y_i + m_2 ) | \ll 1.
$$
It also follows from (\ref{DD}) that
$$
\left| \frac{\sin (\xi)}{\cos(\xi)} - \frac{y_i}{x_i}   \right|
\leq
\frac{2d^2}{(d x_i + m_1)^2} + \left| \frac{d y_i + m_2}{ d x_i + m_1}  - \frac{y_i}{x_i}  \right|  \ll \frac{1}{\sqrt{x_i^2 + y_i^2}}.
$$
Therefore, by the mean value theorem, we obtain
$$
| \xi - \arg (x_i, y_i) | \ll \frac{1}{ \sqrt{x_i^2 + y_i^2} }.
$$
\end{proof}

\begin{proposition}
\label{lastprop}
Let
$$
F(x, y) = a_4 H (x, y)^{2}  + H (x, y) L(x, y) + F_2 (x,y) + F_1(x,y),
$$
where $H$ is an indefinite irreducible primitive integral quadratic form, $a_4 \in \bN$, $L$ is an integral linear form and
no further assumptions on $F_2$ and $F_1$ (In particular, this contains $F$ as in (\ref{diffF2}) with $H$ an indefinite quadratic form and $r = 1$.).
Then either there exists $\lambda > 0$ such that
$$
\# \mathcal{R}_F([N, 2N]) \ll N^{1 - \lambda},
$$
or we have
$$
\inf_{(x, y) \in \bZ^2 } F(x, y) = - \infty.
$$
\end{proposition}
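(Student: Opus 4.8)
The plan is to complete the square in $H$. Since $a_4 \in \bN$ I would write $F = \frac{1}{4a_4}\Phi^2 + R$, where $\Phi = 2a_4 H + L \in \bZ[x,y]$ has homogeneous quadratic part $2a_4 H$, and $R = F_2 + F_1 - \frac{1}{4a_4}L^2 \in \bQ[x,y]$ is a quadratic polynomial with $R(0,0) = 0$; write $R_2 = F_2 - \frac{1}{4a_4}L^2$ for the homogeneous quadratic part of $R$. Expanding $\Phi^2$ gives the identity $F_4 + F_3 + F_2 = \frac{1}{4a_4}\Phi^2 + R_2$. Since $H$ is indefinite and irreducible, its discriminant is positive and not a perfect square, so in the notation of Section~\ref{sec3} we have $\widetilde{F}_4 = H^2$, $\gcd(\widetilde{F}_4, F_3) = H \neq 1$, and $\Xi = \{\xi \in \bT : H(\cos\xi,\sin\xi) = 0\}$ is a four-element set consisting of two antipodal pairs of irrational directions; in particular $F$ is of the shape treated in Section~\ref{sec3}, so Proposition~\ref{2222} is available. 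I would then split into the cases $H \nmid R_2$ and $H \mid R_2$.

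Suppose first $H \nmid R_2$. Then $R_2$ is nonzero at every $\xi \in \Xi$. If $R_2(\cos\xi,\sin\xi) > 0$ for all $\xi \in \Xi$, then for $(B\cos\theta, B\sin\theta) \in \bZ^2$ with $\theta \in \Theta_\xi(B)$ the identity above gives $F_4 + F_3 + F_2 \geq B^2 R_2(\cos\theta,\sin\theta) \gg B^2$, since $|\theta - \xi| < c_\xi B^{-1/2} \to 0$ forces $R_2(\cos\theta,\sin\theta) \geq \frac12 R_2(\cos\xi,\sin\xi)$ for $B$ large; thus (\ref{LOWER}) holds for all $\xi \in \Xi$ with $\mathcal{E} = \emptyset$, and Proposition~\ref{2222} gives $\# \mathcal{R}_F([N,2N]) \ll N^{1-\lambda}$. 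If instead $R_2(\cos\xi,\sin\xi) < 0$ for some $\xi \in \Xi$, I would apply Lemma~\ref{quadpoly} to $\Phi$ (whose quadratic part $2a_4 H$ has nonzero discriminant) and clear denominators to write $\Phi(x,y) = \frac{2a_4}{d^2} H(dx + m_1, dy + m_2) + q_3$ with $m_1, m_2 \in \bZ$, $d \in \bN$, $q_3 \in \bQ$, and then feed $\xi, m_1, m_2, d$ into Lemma~\ref{Dirlem} to get an infinite sequence $(x_i, y_i) \in \bZ^2$, with $x_i^2 + y_i^2 \to \infty$, satisfying $|\Phi(x_i, y_i)| \ll 1$ and $|\arg(x_i,y_i) - \xi| \ll (x_i^2 + y_i^2)^{-1/2}$. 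Along this sequence, the mean value theorem applied to $R_2$ on the unit circle gives $F(x_i, y_i) = \frac{1}{4a_4}\Phi(x_i,y_i)^2 + R_2(x_i,y_i) + F_1(x_i,y_i) = (x_i^2 + y_i^2)\,R_2(\cos\xi,\sin\xi) + O((x_i^2+y_i^2)^{1/2}) \to -\infty$, so $\inf_{(x,y)\in\bZ^2} F(x,y) = -\infty$.

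Suppose now $H \mid R_2$. As $H$ is irreducible of degree $2$ and $R_2 \in \bQ[x,y]$ is a binary quadratic form, this forces $R_2 = c_0 H$ for some $c_0 \in \bQ$. Using $H = \frac{1}{2a_4}(\Phi - L)$ and completing the square once more, I get $F = \frac{1}{4a_4}\Psi^2 + L' - \frac{c_0^2}{4a_4}$, where $\Psi = 2a_4 H + L + c_0$ has quadratic part $2a_4 H$ and $L' = F_1 - \frac{c_0}{2a_4}L$ is a homogeneous linear form. If $L' \equiv 0$, then $F = \mathfrak{F} \circ \Psi$ with $\mathfrak{F}(t) = \frac{1}{4a_4}t^2 - \frac{c_0^2}{4a_4}$ of degree $2$, and Lemma~\ref{single} yields $\# \mathcal{R}_F([N,2N]) \ll N^{1/2}$. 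If $L' \not\equiv 0$, then $L'$ does not vanish at any $\xi \in \Xi$ (irrational directions), so by the antipodal symmetry of $\Xi$ there is $\xi \in \Xi$ with $L'(\cos\xi,\sin\xi) < 0$; applying Lemma~\ref{quadpoly} to $\Psi$ and then Lemma~\ref{Dirlem} exactly as above produces an infinite sequence $(x_i, y_i) \in \bZ^2$, with $x_i^2 + y_i^2 \to \infty$, satisfying $|\Psi(x_i,y_i)| \ll 1$ and $\arg(x_i,y_i) \to \xi$, along which $F(x_i,y_i) = (x_i^2+y_i^2)^{1/2}\,L'(\cos\xi,\sin\xi) + O(1) \to -\infty$, so again $\inf_{(x,y)\in\bZ^2} F(x,y) = -\infty$.

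The step I expect to be the main obstacle is the ``middle'' regime $|\mathfrak{L}(\cos\theta,\sin\theta)| \asymp B^{-1}$, where $F_4, F_3, F_2$ are all of order $B^2$ and may cancel, so the Diophantine-approximation estimates of Sections~\ref{sec2}--\ref{sec3} no longer pin down the sign of $F_4 + F_3 + F_2$. The identity $F_4 + F_3 + F_2 = \frac{1}{4a_4}\Phi^2 + R_2$ is what sidesteps this: it exhibits the sum as a nonnegative square plus the lower-order form $R_2$, whose behaviour at the directions of $\Xi$ is clear-cut --- either it is bounded below by a positive multiple of $B^2$ (feeding Proposition~\ref{2222}) or it is negative along the hyperbola $\{\Phi = 0\}$, in which case Lemma~\ref{Dirlem}, which is tailor-made to place integer points along such a hyperbola, forces $\inf F = -\infty$. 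The one remaining subtlety, $H \mid R_2$, is exactly the case in which this quadratic ``remainder'' degenerates onto the zero set of $H$, and there $F$ collapses to a constant multiple of the square of a quadratic polynomial plus an affine term, which is disposed of by Lemma~\ref{single} or once more by Lemma~\ref{Dirlem}.
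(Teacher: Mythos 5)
Your proposal is correct and follows essentially the same route as the paper: the same completion of the square (your $\Phi^2/(4a_4) + R$ is exactly the paper's $a_4(H+L/(2a_4))^2 + Q$ after Lemma \ref{quadpoly}), the same dichotomy at the zeros of $H$ — sign of the quadratic remainder feeding Proposition \ref{2222} in the positive case and Lemma \ref{Dirlem} forcing $\inf F = -\infty$ in the negative case — and the same use of Lemma \ref{single} (with a second square completion when $H \mid R_2$) for the degenerate cases. The only difference is cosmetic: you organize the cases by $H \nmid R_2$ versus $H \mid R_2$ instead of by $\deg Q$, and you invoke Lemma \ref{quadpoly} only where Lemma \ref{Dirlem} needs integer shifts, but the substance is identical.
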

	
\begin{proof}
First we complete the square and  write
\[
a_4 H(x,y)^2 +   H(x,y) L(x,y) = a_4 \left(H(x,y) + \frac{  L(x,y) }{2 a_4} \right)^2 - \frac{ L(x,y)^2 }{4a_4}.\]
Then by Lemma \ref{quadpoly} we can write
\[H(x,y) + \frac{ L(x,y) }{2 a_4}  = H(x + q_1, y + q_2) + q_3, \]
for some $q_1, q_2, q_3 \in \bQ$ depending only on the coefficients of $F$. It follows that
\begin{equation}
\label{Findef1}
F(x,y) = a_4 (H(x+q_1, y+q_2) + q_3 )^2 + Q(x,y),
\end{equation}
where
$$
Q(x,y) = - \frac{L(x,y)^2}{4a_4}  + F_2(x,y) + F_1(x,y) = Q_2(x, y) + F_1(x,y).
$$
We now consider each case $ \deg Q \in \{0, 1, 2\}$ separately, where we further split the case $\deg Q = 2$ into two subcases $H \nmid Q_2$ and $H| Q_2$.

\subsection{$\deg Q = 2$ and $H \nmid Q_2$}
Let $\xi \in \Xi$. Then $Q_2(\cos (\xi), \sin (\xi))  \neq 0$ and it follows that
$$
|Q_2(\cos (\theta), \sin (\theta))| \gg 1
$$
for all $\theta \in \Theta_\xi(B)$;
therefore, we have
$$
|Q(B \cos (\theta), B \sin (\theta))| \gg |Q_2(B \cos (\theta), B \sin (\theta))| \gg B^2
$$
for $\theta \in \Theta_\xi(B)$.
Suppose 
$Q_2 (\cos (\theta), \sin (\theta)) > 0$ for all $\theta \in \Theta_\xi(B)$.
Since
$$
a_4 (H(x+q_1, y+q_2) + q_3 )^2 \geq 0,
$$
it follows that there exists $C_0 > 0$ satisfying
$$
F(B \cos (\theta), B \sin (\theta)) > C_0  B^2
$$
for all $\theta \in \Theta_\xi(B)$, which implies (\ref{LOWER}).
On the other hand, suppose 
$Q_2 (\cos (\theta), \sin (\theta)) < 0$ for all $\theta \in \Theta_\xi(B)$.
Then by Lemma \ref{Dirlem} there exist infinitely many pairs of integers
$$
(x_i, y_i) \in \bZ^2 \cap \Theta_\xi( \sqrt{x_i^2 + y_i^2} )
$$
such that
\begin{eqnarray}
\label{bound}
a_4 \left(H(x_i + q_1, y_i + q_2) + q_3\right)^2 \ll 1.
\end{eqnarray}
Since
$$
F(x_i, y_i) =  Q(x_i, y_i)  +  O(1) = Q_2(x_i, y_i)  +  O( \sqrt{x_i^2 + y_i^2}  )
$$
and $Q_2(x_i, y_i) < 0$, we obtain
\begin{eqnarray}
\label{liminf}
\inf_{(x, y) \in \bZ^2 } F(x, y) = - \infty
\end{eqnarray}
in this case. Therefore, either we have $Q_2 (\cos (\xi), \sin (\xi)) > 0$ for each $\xi \in \Xi$, in which case
by Proposition \ref{2222} there exists $\lambda > 0$ such that
$$
\# \mathcal{R}_F([N, 2N]) \ll N^{1 - \lambda},
$$
or (\ref{liminf}) holds.

\subsection{$\deg Q = 1$}
\label{case deg Q = 1}
In this case, $Q$ is an integral linear form. Let $\xi \in \Xi$.
Then $Q(\cos (\xi), \sin (\xi))  \neq 0$ and it follows that
$|Q(\cos (\theta), \sin (\theta))| \gg 1$ for all $\theta \in \Theta_\xi(B)$;
therefore, we have
$$
|Q(B \cos (\theta), B \sin (\theta))| \gg B
$$
for all $\theta \in \Theta_\xi(B)$.
Again, by Lemma \ref{DD} we find infinitely many pairs of integers
$$
(x_i, y_i) \in \bZ^2 \cap \Theta_\xi( \sqrt{x_i^2 + y_i^2} )
$$
satisfying (\ref{bound}). It follows that
$$
F(x_i, y_i) =  Q(x_i, y_i)  +  O(1).
$$
If $Q(x_i, y_i) > 0$ for $i$ sufficiently large, then by considering $\xi + \pi$ instead of $\xi$,
we may replace, without loss of generality, $(x_i, y_i)$ with $(-x_i, -y_i)$, from which we see that (\ref{liminf}) holds.

\subsection{$\deg Q = 0$}
\label{case deg Q = 0}
In this case, we have that $Q$ is the zero polynomial. Therefore, we may write
$$
F(x,y) = q_0 \left( m_0 H( d x + m_1, d y +  m_2) + m_3\right)^2,
$$
for some $q_0 \in \bQ_{>0}$, $d \in \bN$ and $m_0, m_1, m_2, m_3 \in \mathbb{Z}$.
Then it follows from Lemma \ref{single} that
$$
\# \mathcal{R}_F([N, 2N ]) \ll \sqrt{N}.
$$

\subsection{$\deg Q = 2$ and $H | Q_2$}
Finally, we consider the case $H | Q_2$. In this case, we may write
$$
Q_2 (x, y) = q_0 H ( (x + q_1) - q_1, (y+ q_2) - q_2 ) = q_0 ( H(x + q_1, y+ q_2) + q_3 ) + M(x, y) +  c_0,
$$
where $q_0, c_0 \in \mathbb{Q}$ and $M$ is either a rational linear form or the zero polynomial. 
Therefore, by completing the square, we obtain
$$
F(x, y) = a_4 \left(   H(x + q_1, y+ q_2) + q_3  - \frac{q_0}{2 a_4}  \right)^2   - \frac{q_0^2}{4 a_4}  +     M(x, y) +  c_0.
$$
If $\deg M = 1$, then we may argue in the same manner as in Section \ref{case deg Q = 1} and obtain the same conclusion.
On the other hand, if $\deg M = 0$, i.e. $M$ is the zero polynomial, then the same argument as in Section \ref{case deg Q = 0}
yields
$$
\# \mathcal{R}_F([N, 2N]) \ll \sqrt{N}.
$$
\end{proof}

\section{Proof of Proposition \ref{prime} }
\label{proofprime}

Recall that the hypotheses of Proposition \ref{prime} are $F_4$ is positive semi-definite and  $\gcd( \widetilde{F}_4, F_3, F_2, F_1) \ne 1$.
We now collect a few fundamental results from Diophantine geometry.
Our main reference for this material is \cite{Silver}.
By an affine algebraic curve, we mean a geometrically irreducible affine variety of dimension $1$.
Given an affine algebraic curve $\mathcal{C} \subseteq \bA^n$ defined over $\bQ$, by its genus $g(\mathcal{C})$ we mean the genus of any smooth projective model of $\mathcal{C}$.

\begin{theorem}[Siegel's theorem]
\label{Siegel}
Let $P \in \bQ[x,y]$ be irreducible over $\overline{\bQ}$ and
$\mathcal{C} = \{ (x,y) \in \bA^2: P(x,y) = 0  \}$.
If $g( \mathcal{C}) > 0$, then $\C(\bZ)$ is finite.
\end{theorem}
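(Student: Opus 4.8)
The plan is to treat this as the classical theorem of Siegel and to reduce it, via the theory of heights and the Mordell--Weil theorem, to Roth's theorem on Diophantine approximation. Fix a smooth projective model $\widetilde{\C}$ of $\C$ and let $S \subseteq \widetilde{\C}(\overline{\bQ})$ be the finite set of points lying over the ``points at infinity'' of the affine curve $\C$ (those not coming from $\C$ itself); integral points of $\C$ then correspond to rational points of $\widetilde{\C}$ that avoid $S$ and are $v$-adically integral near $S$ at every finite place $v$. Assume for contradiction that there are infinitely many integral points $P_1, P_2, \dots$ of $\C$, so that $h(P_i) \to \infty$ for a Weil height $h$ attached to an ample divisor on $\widetilde{\C}$.

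First I would localise the problem. Since only finitely many absolute values are relevant, after passing to an infinite subsequence there is a single place $v$ and a single point $Q \in S$ with $P_i \to Q$ in the $v$-adic topology. Picking a rational function $t$ on $\widetilde{\C}$ vanishing at $Q$, the integrality of the $P_i$ forces $|t(P_i)|_v \to 0$, and a routine comparison of local heights gives a bound of the shape $-\log |t(P_i)|_v \geq c\, h(P_i)$ with $c > 0$ --- but $c$ is a fixed constant that by itself is far too weak to contradict anything.

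The key step is the covering trick to amplify $c$. Because $g(\C) \geq 1$, the Abel--Jacobi map embeds $\widetilde{\C}$ into its Jacobian $J$; pulling back multiplication-by-$m$ on $J$ produces, for each $m$, an unramified covering $\phi_m \colon \C_m \to \widetilde{\C}$ of degree $m^{2g}$. By the Chevalley--Weil theorem the $P_i$ lift to points $P_i' \in \C_m(K_m)$ over a fixed number field $K_m$, and these lifts are again integral near a fixed finite set of points; the Mordell--Weil theorem (finite generation of $J(K_m)$) is what lets one carry out the descent while controlling $K_m$. Since $\phi_m$ is unramified, a uniformizer at a point $Q' \in \C_m$ above $Q$ pulls back to $t$ to high order, so the $P_i'$ approach the fibre over $Q$ much faster relative to their heights than the $P_i$ approached $Q$ --- the effective constant improves roughly by a factor growing with $m$. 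Projecting $\C_m$ to $\bP^1$ by a suitable function and tracking the estimates through $\phi_m$, one finds that the $v$-adic coordinate of $P_i'$ approximates the algebraic number $t(Q')$ to an exponent exceeding $2$ once $m$ is chosen large, while $h(P_i') \to \infty$. This contradicts Roth's theorem, and hence $\C(\bZ)$ is finite.

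I expect the main obstacle to be the height bookkeeping in the covering trick: one must check that the quadratic growth of the canonical height on $J$ under $[m]$ is transported correctly through the Abel--Jacobi embedding and the unramified covering into a genuine gain in the approximation exponent on $\bP^1$, uniformly in $i$. A secondary subtlety is the genus $1$ case, where it is cleanest to work directly on the elliptic curve, using multiplication-by-$m$ and Mordell--Weil there, and where one needs both the archimedean and the non-archimedean forms of Roth's theorem; alternatively, for $g(\C) \geq 2$ one may simply invoke Faltings' theorem, which yields finiteness of $\widetilde{\C}(\bQ)$ outright.
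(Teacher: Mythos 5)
This statement is Siegel's theorem on integral points: the paper does not prove it, but quotes it as a classical result with a pointer to Hindry--Silverman \cite{Silver}, so there is no internal proof to compare against. Your sketch is precisely the standard argument of that reference (and essentially Siegel's original one): encode integrality as a distance--height inequality at a single place and point at infinity, amplify the approximation exponent by pulling back multiplication by $m$ on the Jacobian and descending via Mordell--Weil/Chevalley--Weil, and conclude with Roth's theorem; as an outline this is correct, modulo the substantial input theorems you invoke. One local slip: for an \emph{unramified} covering $\phi_m$, a uniformizer at $Q$ pulls back to a uniformizer at $Q'$ (order $1$), not ``to high order''; the gain in exponent comes not from increased vanishing order but solely from the height dropping by a factor $\asymp m^{2}$ (quadraticity of the canonical height under $[m]$, transported through the Abel--Jacobi embedding) while the $v$-adic distance to the fibre over $Q$ is preserved up to constants --- which is exactly the bookkeeping point you flag yourself in the final paragraph. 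With that correction, your proposal coincides with the proof in the cited source rather than offering a different route.
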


\begin{lemma}
\label{birat}
Let $P$ and $\mathcal{C}$ be as in Theorem \ref{Siegel}.
Suppose $g(\mathcal{C}) = 0$ and $\mathcal{C}(\bQ)$ is infinite.
Then $\mathcal{C}$ is birational over $\bQ$ to $\bA^1$. In particular, there exist polynomials $R_1, R_2, Q_1, Q_2 \in \bQ[t]$ such that
the two sets
$$
\mathcal{C}(\bQ)
\quad
\textnormal{and}
\quad
\left\{ \left( \frac{R_1(t)}{Q_1(t)},  \frac{R_2(t)}{Q_2(t)}  \right) : t \in \bQ, Q_1(t), Q_2(t) \neq 0    \right\}
$$
are equal up to finitely many elements.
\end{lemma}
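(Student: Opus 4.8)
The plan is to reduce the statement to the classical fact that a smooth projective genus $0$ curve over $\bQ$ carrying a rational point is $\bQ$-isomorphic to $\bP^1_\bQ$. First I would pass to a smooth projective model $\widetilde{\C}$ of $\C$ over $\bQ$, for instance the normalization of the projective closure of $\C$ in $\bP^2$; by hypothesis $g(\widetilde{\C}) = g(\C) = 0$. The normalization morphism is an isomorphism over the smooth locus of $\C$, and $\C$, being an integral affine curve, has only finitely many singular points. Since $\C(\bQ)$ is infinite, there is a smooth $\bQ$-rational point on $\C$, and it lifts to a $\bQ$-rational point of $\widetilde{\C}$; thus $\widetilde{\C}(\bQ) \neq \emptyset$.

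Next I would invoke the structure of genus $0$ curves (see e.g.\ \cite{Silver}): a smooth projective curve of genus $0$ over a field $k$ with a $k$-rational point is $k$-isomorphic to $\bP^1_k$. Concretely, the anticanonical system realizes $\widetilde{\C}$ as a smooth plane conic, and projection from the rational point furnishes the isomorphism. Taking $k = \bQ$ gives $\widetilde{\C} \cong \bP^1_\bQ$, and composing with the birational equivalence $\widetilde{\C} \dashrightarrow \C$ shows that $\C$ is birational over $\bQ$ to $\bP^1_\bQ$, hence to $\bA^1_\bQ$.

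It remains to extract the explicit rational parametrization. Fix a birational map $\varphi\colon \bA^1_\bQ \dashrightarrow \C \subseteq \bA^2$ over $\bQ$ and write its two coordinate functions as reduced fractions, producing $R_1, R_2, Q_1, Q_2 \in \bQ[t]$ with $\varphi(t) = (R_1(t)/Q_1(t),\, R_2(t)/Q_2(t))$; this is a morphism away from the finite zero set of $Q_1 Q_2$. Because $P(R_1/Q_1, R_2/Q_2)$ vanishes identically in $\bQ(t)$, every such image point lies in $\C(\bQ)$. Conversely, the inverse rational map $\psi\colon \C \dashrightarrow \bA^1_\bQ$ is defined over $\bQ$ outside a finite set $S \subseteq \C$, so for $p \in \C(\bQ) \setminus S$ we get $\psi(p) \in \bQ$, and $\varphi(\psi(p)) = p$ for all such $p$ outside a further finite set. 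Comparing the two inclusions yields the asserted equality of $\C(\bQ)$ with $\{(R_1(t)/Q_1(t), R_2(t)/Q_2(t)) : t \in \bQ,\ Q_1(t), Q_2(t) \neq 0\}$ up to finitely many elements.

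None of these steps is hard; the only thing requiring attention is the bookkeeping of the several finite exceptional sets — the singular points of $\C$, the poles of the $Q_i$, and the indeterminacy loci of $\varphi$ and $\psi$ — together with the observation, which is exactly why we need $\C(\bQ)$ to be \emph{infinite} rather than merely nonempty, that a rational point genuinely descends to the smooth model $\widetilde{\C}$.
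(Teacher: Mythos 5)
Your argument is correct and follows essentially the same route as the paper: pass to a smooth projective model $\widetilde{\mathcal{C}}$ over $\bQ$, use the infinitude of $\mathcal{C}(\bQ)$ to produce a rational point on $\widetilde{\mathcal{C}}$, invoke the fact that a genus $0$ smooth projective curve over $\bQ$ with a rational point is $\bQ$-isomorphic to $\bP^1$, and conclude birationality to $\bA^1$. Your explicit bookkeeping of the exceptional sets for the parametrization is a fuller treatment of the ``in particular'' clause, which the paper leaves implicit, but it is not a different approach.
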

\begin{proof}
It is known that any affine algebraic curve is birational to a smooth projective curve \cite[Theorem A.4.1.4]{Silver}.
Furthermore, since $\mathcal{C}$ is defined over $\bQ$, it follows that\footnote{One way to achieve this is to first consider the normalization of $\mathcal{C}$ in $\bA^m$ \cite[Exercise A.1.15]{Silver}
and then take its closure in $\bP^m$, which basically amounts to homogenizing the defining equations. It can be verified that both procedures are defined over $\bQ$.}
in fact $\mathcal{C}$ is birational over $\bQ$ to a smooth projective curve $\widetilde{\mathcal{C}}$ defined over $\bQ$. It is clear that $\widetilde{\mathcal{C}}(\bQ) \neq \emptyset$, because $\mathcal{C}(\bQ)$ is infinite (also see \cite[Exercise A.1.2]{Silver}). Therefore,  $\widetilde{\mathcal{C}}$ is isomorphic over $\bQ$ to $\bP^1$ \cite[Theorem A.4.3.1]{Silver}. Finally, recalling that $\bP^1$ is birational over $\bQ$ to $\bA^1$, we obtain our result.
\end{proof}

We leave the details of the following lemma to the reader.
\begin{lemma}
\label{easy}
Let $P \in \bQ[x,y]$ be irreducible over $\bQ$, but reducible over $\overline{\bQ}$, and  $\mathcal{C} = \{ (x,y) \in \bA^2: P(x,y) = 0  \}$.
Then $\mathcal{C}(\bQ)$ is finite.
\end{lemma}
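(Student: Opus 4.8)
The plan is to combine the irreducibility of $P$ over $\bQ$ with a Galois-descent argument and the elementary fact that two distinct irreducible plane curves over $\overline{\bQ}$ meet in only finitely many points. Since $\bQ$ has characteristic zero, an irreducible $P \in \bQ[x,y]$ is automatically squarefree over $\overline{\bQ}$ (otherwise $\gcd(P,\partial_x P,\partial_y P)$ would be a nonconstant proper divisor of $P$ lying in $\bQ[x,y]$, contradicting irreducibility over $\bQ$). Hence one may write $P = c\,P_1\cdots P_m$ with $P_1,\dots,P_m \in \overline{\bQ}[x,y]$ distinct and irreducible, each normalized to have leading coefficient $1$ with respect to a fixed monomial order; the hypothesis that $P$ is reducible over $\overline{\bQ}$ then forces $m \geq 2$.

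Next I would check that $\Gal(\overline{\bQ}/\bQ)$ acts transitively on $\{P_1,\dots,P_m\}$. Any automorphism $\sigma$ permutes this set by uniqueness of factorization, and since each factor is monic in the chosen order there is no scalar ambiguity; so if the action had more than one orbit, the product of the factors in each orbit would be $\Gal(\overline{\bQ}/\bQ)$-invariant, hence a polynomial in $\bQ[x,y]$, exhibiting $P$ as a nontrivial product over $\bQ$ and contradicting its irreducibility there.

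The geometric heart of the proof is then immediate. Let $(x_0,y_0) \in \mathcal{C}(\bQ)$; then $P_i(x_0,y_0) = 0$ for some $i$, and since every $\sigma \in \Gal(\overline{\bQ}/\bQ)$ fixes the rational numbers $x_0$ and $y_0$, applying $\sigma$ gives $(\sigma P_i)(x_0,y_0) = 0$. By transitivity $\sigma P_i$ runs over all of $P_1,\dots,P_m$ as $\sigma$ varies, so in particular $(x_0,y_0)$ lies on both $\{P_1 = 0\}$ and $\{P_2 = 0\}$. As $P_1$ and $P_2$ are distinct irreducibles in $\overline{\bQ}[x,y]$, hence coprime, their common zero locus is finite --- by B\'ezout's theorem, or by the non-vanishing of the resultant $\mathrm{Res}_y(P_1,P_2)$, with the degenerate case where one of $P_1,P_2$ involves a single variable handled directly. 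Therefore $\mathcal{C}(\bQ)$ is contained in this finite set, as claimed.

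The steps that require a little care --- and that are presumably why the authors leave the lemma to the reader --- are the routine facts that an irreducible $P$ over a characteristic-zero field is squarefree over $\overline{\bQ}$ and that the product of a Galois orbit of geometric factors descends to $\bQ[x,y]$; the key idea itself, that a rational point is pinned down by two distinct conjugate components of $\mathcal{C}$, is the short and essential observation and presents no real obstacle.
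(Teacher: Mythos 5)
Your proof is correct and follows essentially the same route as the argument the authors had in mind (the paper leaves the details to the reader): factor $P$ into distinct irreducibles over $\overline{\bQ}$, note there are at least two, observe that a rational point must lie on several Galois-conjugate components, and conclude by the finiteness of the intersection of two distinct irreducible plane curves. The only cosmetic difference is your justification of squarefreeness over $\overline{\bQ}$ via the derivative/gcd argument, which is a perfectly standard (indeed more elementary) way to handle that step.
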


We need the following explicit version of Hilbert's irreducibility theorem (see \cite[Chapter 9]{Lang} for a general introduction to this theorem).
The proof can be found in \cite[Theorem 2.5]{Cohen} and an overview of related results in \cite{CD}.
\begin{theorem}[Hilbert's irreducibility theorem]
\label{Hilb}
Given $Y \in \bQ[x, u]$ irreducible over $\bQ$, let
$$
E(B) = \#\{ t \in \bZ \cap [-B, B]: Y(x, t) \textnormal{ is reducible over } \bQ   \}.
$$
Then
$$
E(B) \ll \sqrt{B} \log B.
$$
\end{theorem}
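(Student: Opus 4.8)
The plan is to reduce the count to rational points on a family of auxiliary curves and then dispatch each curve using Siegel's theorem (Theorem~\ref{Siegel}) together with Lemmas~\ref{birat} and~\ref{easy}. First I would normalise: clearing denominators we may take $Y\in\bZ[x,u]$, and removing the content of $Y$ as a polynomial in $x$ (necessarily a constant, since $Y$ is irreducible of positive degree in $x$) we may assume $Y$ is primitive in $x$; then $Y$ is irreducible in $\bQ(u)[x]$ by Gauss's lemma. Write $n=\deg_x Y$; if $n\le 1$ then $Y(x,t)$ is never reducible and $E(B)\ll 1$, so assume $n\ge 2$. For all but $O(1)$ values of $t_0\in\bZ$ (those killing the leading coefficient or the $x$-discriminant of $Y$), $Y(x,t_0)\in\bZ[x]$ has degree $n$ and is separable, and if it is reducible over $\bQ$ then it has a monic factor of some degree $d\in\{1,\dots,n-1\}$ over $\bQ$.

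For each $d$, let $\mathcal{Z}_d\subseteq\bA^1_u\times\bA^d_{\mathbf c}$ be the $\bQ$-variety cut out by the vanishing of the remainder of $Y(x,u)$ upon division by $x^d+c_{d-1}x^{d-1}+\cdots+c_0$; these are polynomial equations over $\bQ$, and the first projection $\pi\colon\mathcal{Z}_d\to\bA^1_u$ is finite, so every irreducible component of $\mathcal{Z}_d$ is a curve. By the previous paragraph, every reducible specialisation $t_0\in[-B,B]\cap\bZ$, apart from the $O(1)$ exceptions, equals $\pi(P)$ for some $P$ lying on some irreducible component $W$ of some $\mathcal{Z}_d$, with the $u$-coordinate of $P$ an integer. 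A component $W$ not dominating $\bA^1_u$ maps to a single value of $u$ and so contributes at most $1$. If $W$ dominates $\bA^1_u$, then $\deg(\pi|_W)\ge 2$: a degree-one dominant component would produce a monic factor of $Y$ of degree $d$ with coefficients in $\bQ(u)$, contradicting irreducibility of $Y$ over $\bQ(u)$.

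It remains to bound, for each of the finitely many dominant components $W$, the number of integers $t_0\in[-B,B]$ of the form $u(P)$ with $P\in W(\bQ)$. Passing to a birational plane model, if $W$ is geometrically reducible then $W(\bQ)$ is finite by Lemma~\ref{easy}, an $O(1)$ contribution. If $W$ is geometrically irreducible, pass to a smooth projective model $\widetilde W$ and let $\bar u\colon\widetilde W\to\bP^1$ extend the $u$-coordinate: a point $P\in W(\bQ)$ with $u(P)\in\bZ$ is an integral point of the affine curve $\widetilde W\setminus\bar u^{-1}(\infty)$, so if $g(\widetilde W)\ge 1$, or if $g(\widetilde W)=0$ with $\#\bar u^{-1}(\infty)\ge 3$, then Siegel's theorem (Theorem~\ref{Siegel} and its classical genus-zero companion) forces this set to be finite. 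The one surviving case is $g(\widetilde W)=0$ with $\#\bar u^{-1}(\infty)\le 2$: if $W(\bQ)$ is finite we are done, and otherwise Lemma~\ref{birat} parametrises $W(\bQ)$ rationally, so that $u=\psi(s)$ for some $\psi\in\bQ(s)$ with $\deg\psi=\deg(\pi|_W)\ge 2$. One then checks that a rational function of degree at least $2$ assumes $\ll_\psi\sqrt{B}$ integer values in $[-B,B]$: writing $\psi=P/Q$ in lowest terms and $s=a/b$ with $\gcd(a,b)=1$, the condition $\psi(a/b)\in\bZ$ forces the homogenised denominator $\widehat Q(a,b)$ to divide a fixed nonzero integer, and a short case analysis on its $\overline{\bQ}$-factorisation — Thue's bound when $\widehat Q$ has at least three distinct roots, and a Pell-type analysis otherwise, in either case combined with $|\psi(a/b)|\le B$ — yields the claim. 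Summing over the $O(1)$ components and over $d$, we get $E(B)\ll\sqrt{B}\le\sqrt{B}\log B$.

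The main obstacle is this last elementary estimate — essentially a hands-on instance of Siegel's theorem on integral points — together with the bookkeeping that guarantees the auxiliary varieties $\mathcal{Z}_d$ behave well over $\bQ$ (geometric reducibility of components, the degree-at-least-two claim). I note an entirely different and \emph{effective} route, which is the one behind the cited bound: apply the large sieve to the residues of $Y(x,t_0)$ modulo primes $p$, using that a reducible specialisation reduces to a residue class at which $Y(x,\cdot)$ is reducible over $\bF_p$, and bounding from below the number of $a\in\bF_p$ with $Y(x,a)$ irreducible over $\bF_p$ via the Lang--Weil/Weil estimates (a function-field Chebotarev count), with an auxiliary cyclic extension handling the case in which the geometric monodromy group of $Y$ contains no $n$-cycle; this produces exactly the stated $\sqrt{B}\log B$ with an effective implied constant.
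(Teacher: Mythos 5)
The paper offers no proof of Theorem \ref{Hilb} at all: it is quoted as a known result and outsourced to \cite{Cohen} (see also \cite{CD}), whose argument is exactly the large-sieve/function-field Chebotarev route you mention in your closing remark, and which is what produces the stated $\sqrt{B}\log B$ with an effective constant. Your proposal is therefore a genuinely different route: the classical Siegel-theorem proof of Hilbert irreducibility (in the spirit of Lang and Serre), parametrising putative monic degree-$d$ factors by the varieties $\mathcal{Z}_d$, discarding non-dominant and geometrically reducible components, using irreducibility of $Y$ over $\bQ(u)$ to force $\deg(\pi|_W)\ge 2$, and then counting integer values of $u$ on the finitely many surviving curves. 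The outline is sound, and your elementary treatment of the residual genus-zero case (the homogenised denominator $\widehat Q(a,b)$ divides a fixed integer; Thue when it has at least three distinct roots; a Pell-orbit count, $\ll\log B$, for a conjugate quadratic pair; $\ll B^{1/e}$ in the polynomial case) does give $\ll\sqrt{B}$ per curve, hence $E(B)\ll\sqrt{B}$, slightly better than the quoted bound but ineffective, since Siegel and Thue enter. Two caveats to make the sketch airtight: what you need is not Theorem \ref{Siegel} as stated (integral points on a plane curve of positive genus) but the stronger standard form of Siegel's theorem --- finiteness of the rational points $P$ of a curve with $u(P)\in\bZ$ whenever the genus is at least $1$ or $u$ has at least three distinct poles --- because the auxiliary coordinates $c_i$ of your points are only rational, with denominators varying with $t_0$, so these are not literally integral points of an affine model; and Lemma \ref{birat} is stated for plane curves, so one should first pass to a birational plane model (or observe that its proof applies to any affine curve) before invoking it, also in the genus-zero step. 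With those substitutions your argument goes through; the trade-off against the paper's citation is self-containedness and a marginally better exponent at the price of effectivity and heavier Diophantine input.
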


We also need the following two simple technical lemmas. We leave the details of the first lemma to the reader.
\begin{lemma}
\label{irred}
Let $Q, R \in \bZ[t]$ be such that $\gcd(Q, R) = 1$.
Then the polynomial $x Q(t) - R(t) \in \bQ[x, t]$ is irreducible over $\bQ$.
\end{lemma}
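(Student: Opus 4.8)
The plan is to work in the ring $\bQ[x,t]\cong(\bQ[t])[x]$ and exploit that $f(x,t):=xQ(t)-R(t)$ has degree exactly $1$ in the variable $x$. First I would dispose of the degenerate case: if $Q$ is the zero polynomial then $\gcd(Q,R)=1$ forces $R=\pm1$ and there is nothing to prove, so from now on $Q\neq0$; in particular $f$ is nonconstant with leading $x$-coefficient the nonzero element $Q(t)\in\bQ[t]$, hence $f$ is neither zero nor a unit in $\bQ[x,t]$.

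The key preliminary step is to transfer coprimality over $\bZ[t]$ to coprimality over $\bQ[t]$. Since $\bZ[t]$ is a UFD whose irreducible elements are the rational primes together with the primitive irreducible polynomials, the hypothesis $\gcd_{\bZ[t]}(Q,R)=1$ implies that $Q$ and $R$ have no common positive-degree irreducible factor in $\bZ[t]$; by Gauss's lemma this says precisely that $\gcd_{\bQ[t]}(Q,R)=1$.

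Finally, suppose for contradiction that $f=AB$ with $A,B\in\bQ[x,t]$ both nonunits. Comparing $x$-degrees and using that $\bQ[t]$ is an integral domain, exactly one of the factors, say $A$, lies in $\bQ[t]\setminus\bQ$, while $B=B_1(t)x+B_0(t)$ has $x$-degree $1$. Matching the coefficients of $x^1$ and $x^0$ yields $Q=AB_1$ and $R=-AB_0$, so $A$ is a common divisor of $Q$ and $R$ in $\bQ[t]$; by the previous step $A$ must be a nonzero constant, contradicting the choice of $A$ as a nonunit. I do not anticipate a genuine obstacle here: the only points needing care are the implication $\gcd_{\bZ[t]}(Q,R)=1\Rightarrow\gcd_{\bQ[t]}(Q,R)=1$ (a clean consequence of Gauss's lemma together with the classification of irreducibles in $\bZ[t]$) and keeping track of the degenerate case $Q=0$ that the hypothesis implicitly excludes.
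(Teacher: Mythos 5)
Your proof is correct: coprimality in $\bZ[t]$ transfers to $\bQ[t]$ by Gauss's lemma, and a polynomial of $x$-degree $1$ over $\bQ[t]$ whose two coefficients are coprime cannot factor into two nonunits, which is exactly the standard argument intended here — the paper itself leaves this lemma to the reader, so there is no written proof to diverge from. The only cosmetic point is the degenerate case $Q=0$, where $xQ-R=\mp 1$ is a unit and hence not literally irreducible; this never arises in the paper's application (where $Q_1$ is a nonzero denominator), but strictly speaking it should be excluded rather than called vacuous.
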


\begin{lemma} \label{binsep}
Let $G, H \in \mathbb{Z}[x,y]$ be non-constant forms.
If $\gcd(G, H) = 1$, then
$$
\max\{  |G(x,y)|, |H(x, y)| \} \gg (x^2 + y^2)^{ \frac12 \min \{ \deg G,  \deg H \} }
$$
for all $(x,y) \in \bZ^2$.
\end{lemma}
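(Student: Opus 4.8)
The plan is to reduce the statement to a pair of B\'ezout-type identities for the binary forms $G$ and $H$ coming from their Sylvester matrix. Write $m = \deg G$ and $n = \deg H$, and assume without loss of generality that $m \le n$, so that $\min\{\deg G,\deg H\} = m$. Since $\gcd(G,H) = 1$ in $\bQ[x,y]$ and both are forms, they have no common linear factor over $\overline{\bQ}$, so the resultant $R := \mathrm{Res}(G,H)$ is a nonzero integer. The $(m+n)\times(m+n)$ Sylvester matrix of $G$ and $H$, written in the monomial basis of degree $m+n-1$, has determinant $\pm R$; applying Cramer's rule to the linear systems whose right-hand sides are the coordinate vectors of $x^{m+n-1}$ and of $y^{m+n-1}$ produces forms $A_1, A_2$ of degree $n-1$ and $B_1, B_2$ of degree $m-1$, all with integer coefficients, such that
\[ A_1 G + B_1 H = R\,x^{m+n-1} \qquad\text{and}\qquad A_2 G + B_2 H = R\,y^{m+n-1}. \]

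Granting these identities, the lemma is immediate. The case $(x,y) = (0,0)$ is trivial since $m \ge 1$, so fix $(x,y) \in \bZ^2$ with $\varrho := \sqrt{x^2+y^2} \ge 1$ and set $M := \max\{|G(x,y)|,|H(x,y)|\}$. Because $A_1$ and $B_1$ are forms of degrees $n-1$ and $m-1 \le n-1$, we have $|A_1(x,y)|, |B_1(x,y)| \ll \varrho^{\,n-1}$ with implied constants depending only on $G$ and $H$, and hence
\[ |R|\,|x|^{m+n-1} = |A_1(x,y)G(x,y) + B_1(x,y)H(x,y)| \le |A_1(x,y)|\,|G(x,y)| + |B_1(x,y)|\,|H(x,y)| \ll \varrho^{\,n-1} M. \]
The second identity gives the same bound for $|y|^{m+n-1}$. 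Adding the two and using $|x|^{m+n-1} + |y|^{m+n-1} \ge \max\{|x|,|y|\}^{m+n-1} \gg \varrho^{\,m+n-1}$ yields $\varrho^{\,m+n-1} \ll \varrho^{\,n-1} M$; dividing by $\varrho^{\,n-1} \ge 1$ gives $M \gg \varrho^{\,m} = (x^2+y^2)^{\frac12 \min\{\deg G,\deg H\}}$, as required.

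The only real content is the construction of the two B\'ezout identities over $\bZ$ with the stated homogeneous degrees — everything after that is the one-line estimate above — so that is the step I would write out carefully. An alternative to the explicit Sylvester-matrix computation is to invoke the projective Nullstellensatz: emptiness of $\{G = H = 0\}$ in $\bP^1$ forces $x^{m+n-1}, y^{m+n-1} \in (G,H)$ over $\bQ$, and one then clears denominators to land in $\bZ[x,y]$, at the cost of replacing $R$ by an unspecified nonzero integer (harmless here). I would nevertheless favor the Sylvester-matrix version, since it keeps both the degree bookkeeping and the integrality manifest.
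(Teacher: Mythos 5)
Your proof is correct, but it takes a genuinely different route from the paper. You argue algebraically: the coprimality of the forms gives a nonzero resultant $R=\mathrm{Res}(G,H)$, and the Sylvester-matrix (or adjugate/Cramer) construction yields integral B\'ezout identities $A_1G+B_1H=R\,x^{m+n-1}$, $A_2G+B_2H=R\,y^{m+n-1}$ with $\deg A_i=n-1$, $\deg B_i=m-1$; the degree bookkeeping then forces $\max\{|G|,|H|\}\gg \varrho^{\,m}$ at integer points, and your handling of the origin and of the division by $\varrho^{\,n-1}\ge 1$ is fine. The paper instead argues analytically on the unit circle: it lists the finitely many $\theta_i$ with $G(\cos\theta_i,\sin\theta_i)=0$, notes that coprimality forces $H(\cos\theta_i,\sin\theta_i)\neq 0$, and then for a small fixed $\varepsilon$ gets $|H(B\cos\theta,B\sin\theta)|\gg B^{\deg H}$ when $\theta$ is within $\varepsilon$ of some $\theta_i$ and $|G(B\cos\theta,B\sin\theta)|\gg B^{\deg G}$ otherwise, which gives the lemma (and in fact a slightly finer statement in each angular regime). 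The paper's argument is shorter and needs no resultant theory, at the cost of a compactness/continuity step with inexplicit constants; yours keeps the constants effective (explicit in $R$ and the coefficients of the cofactor forms), makes integrality manifest, and works verbatim for any real point with $x^2+y^2\ge 1$, which is more than the lemma requires but costs nothing. Both are complete proofs of the stated estimate.
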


\begin{proof}
If $G$ does not have a non-trivial real zero, then the result follows easily. Let us suppose otherwise and let
$\theta_1, \ldots, \theta_m \in \mathbb{T}$ be all the values such that $G( \cos(\theta_i), \sin(\theta_i) ) = 0$.
Since $\gcd(G, H) = 1$, it follows that $H( \cos(\theta_i), \sin(\theta_i) ) \neq 0$ for all $1 \leq i \leq m$.
Let $\varepsilon > 0$ be sufficiently small.
If $|\theta - \theta_i| < \varepsilon$ for some $1 \leq i \leq m$, then we have
$$
|H (B \cos (\theta), B \sin (\theta))| =  B^{\deg H} |H (\cos (\theta), \sin (\theta))| \gg  B^{\deg H}.
$$
Otherwise, we have
$$
|G (B \cos (\theta), B \sin (\theta))|   =  B^{\deg G} |G (\cos (\theta), \sin (\theta))| \gg  B^{\deg G}.
$$
\end{proof}

We prove Proposition \ref{prime} by considering each case
$\deg \gcd( \widetilde{F}_4, F_3, F_2, F_1) \in \{1, 2, 3, 4\}$
separately.
If $\deg \gcd( \widetilde{F}_4, F_3, F_2, F_1) = 4$, then the statement of Proposition \ref{prime} follows easily from
the definition of $\widetilde{F}_4$; therefore, we omit the details for this case. We have the following two lemmas for the remaining cases.

Given $C \in \bN$, we denote
$$
C \wp = \{ C p: p \in \wp  \},
$$
where $\wp$ is the set of prime numbers.

\begin{lemma}
\label{lemspcase}
Suppose $F_4$ is positive semi-definite and $\deg \gcd( \widetilde{F}_4, F_3, F_2, F_1) = 2$.
Let $C \in \bN$ and $D \in \bZ$. Then
$$
F(\bZ^2) \neq (C \bZ)_{\geq D}.
$$

\end{lemma}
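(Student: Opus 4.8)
Since a form of degree $2$ cannot divide a nonzero form of degree $\le 1$, the hypothesis $\deg\gcd(\widetilde F_4,F_3,F_2,F_1)=2$ forces $F_1=0$; writing $H$ for this gcd, a primitive integral binary quadratic form, we get $F_2=\alpha H$ and $F_3=\beta H$ with $\alpha\in\bQ$ and $\beta$ a linear form (both possibly zero), and $\widetilde F_4=HK$, hence
\[
F=H\cdot G,\qquad G:=P_0K+\beta+\alpha
\]
with $G$ a polynomial of degree $2$ and leading form $M:=P_0K$. By Lemma \ref{possemi}, $\widetilde F_4$ is one of $\ell_0^{4}$, $\ell_1^{2}\ell_2^{2}$ (distinct primitive linear forms), or $q^{2}$ ($q$ an irreducible indefinite primitive quadratic), and a short check shows: either $H=\ell^{2}$ with $\widetilde F_4=\ell^{2}(\ell')^{2}$ and $\ell\nmid\ell'$, in which case $\gcd(H,M)=1$ and $M=P_0(\ell')^{2}$; or $H\mid M$, so $M=\beta_0H$ for a nonzero constant $\beta_0$; in both cases $P_0$ is a constant because $\deg\widetilde F_4=4$. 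I would then show that every such $F$ satisfies $\inf_{\bZ^2}F=-\infty$ or $\#\R_F([N,2N])\ll N^{1-\lambda}$ for some $\lambda>0$; either conclusion is incompatible with $F(\bZ^2)=(C\bZ)_{\ge D}$, that set being bounded below and meeting $[N,2N]$ in $\asymp N/C$ points.

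\emph{The coprime case $H=\ell^{2}$, $M=P_0(\ell')^{2}$.} Here $F=\ell^{2}\bigl(P_0(\ell')^{2}+\beta+\alpha\bigr)$. If $\beta$ is not a scalar multiple of $\ell'$, restrict $F$ to the lines $\{\ell=k\}$: in the variable $w=\ell'(x,y)$ the restriction equals $k^{2}\bigl(P_0w^{2}+\mu w+k\nu+\alpha\bigr)$ with $\nu\ne 0$ (this is exactly where $\beta\not\parallel\ell'$ enters), so taking $k$ large of the right sign and $w$ near $-\mu/(2P_0)$ forces $F\to-\infty$. If $\beta=c\ell'$ then $F=\ell^{2}\,\mathfrak F_0(\ell')$ with $\mathfrak F_0$ of degree $2$; if $\mathfrak F_0$ takes a negative value at an integer we again get $\inf_{\bZ^2}F=-\infty$, and otherwise $\mathfrak F_0\ge 0$ on $\bZ$ and, passing to the coordinates $(\ell,\ell')$, a divisor-type count gives $\#\R_F([N,2N])\ll\sqrt N\log N$.

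\emph{The case $H\mid M$.} Now $F=\beta_0H^{2}+\beta H+\alpha H$ with $\beta_0\ne0$ (in fact $\beta_0>0$, since $F_4=\beta_0H^{2}$ is positive semi-definite). If $\beta=0$ then $F=\mathfrak F\circ H$ with $\deg\mathfrak F=2$, so Lemma \ref{single} gives $\#\R_F([N,2N])\ll\sqrt N$. Suppose $\beta\ne0$. If $H=q$: choose $s\ne0$ represented by $q$; the bi-infinite, geometrically growing orbit of solutions of $q=s$ accumulates at the irrational asymptotic directions, on which the rational linear form $\beta$ cannot vanish, so $\beta$ is unbounded in both directions along the orbit and $F=q(\beta_0q+\beta+\alpha)$ runs to $-\infty$ along it. If $H=\ell^{2}$ then $\widetilde F_4=\ell^{4}$ and $F=\ell^{2}(P_0\ell^{2}+\beta+\alpha)$; when $\ell\nmid\beta$ the restriction of $F$ to a line $\{\ell=k\}$, $k\ne 0$, is a nonconstant linear polynomial, so $\inf_{\bZ^2}F=-\infty$, while when $\beta=c\ell$ we have $F=\mathfrak F\circ\ell$ with $\deg\mathfrak F=4$ and Lemma \ref{single} applies. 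If $H=\ell_1\ell_2$ then $\widetilde F_4=\ell_1^{2}\ell_2^{2}$ and, in the coordinates $(\ell_1,\ell_2)$, $F=su\bigl(P_0su+\gamma_1s+\gamma_2u+\alpha\bigr)$; either one of the degenerate lines $\{P_0u+\gamma_1=0\}$ or $\{P_0s+\gamma_2=0\}$ passes through an integer where $F$ is genuinely linear, giving $\inf_{\bZ^2}F=-\infty$, or a divisor-type count as before gives $\#\R_F([N,2N])\ll\sqrt N\log N$.

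In every branch the required dichotomy holds, which proves the lemma. The delicate point is the case $H\mid M$ with $\beta\ne0$: there $F=HG$ is genuinely reducible but the standard devices are weak — the level curves of the factors carry infinitely many integer points, so Siegel's theorem says nothing, and $H$ and $M$ are not coprime, so Lemma \ref{binsep} gives nothing. What saves the argument is the rigidity forced by Lemma \ref{possemi} (the multiplicities in $\widetilde F_4$ are even), which pins $F$ to exactly one of the shapes $\beta_0H^{2}+\beta H+\alpha H$ with $H\in\{\ell^{2},\ell_1\ell_2,q\}$; from there one is reduced either to a genuine one-variable composition $\mathfrak F\circ(\cdot)$, handled by Lemma \ref{single}, or to producing an explicit one-parameter family along which $F$ is unbounded below, and keeping straight which of these occurs — according to whether $\beta$ is proportional to a linear factor of $H$ and to the sign behaviour of the auxiliary quadratic $\mathfrak F_0$ — is the bulk of the casework.
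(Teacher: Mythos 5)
Your opening reduction ($F_1=0$, $F_2=\alpha H$, $F_3=\beta H$, $F=HG$) is correct, but the case enumeration that drives the rest of your argument has a genuine gap. You assert that Lemma \ref{possemi} forces $\widetilde{F}_4$ to be one of $\ell_0^4$, $\ell_1^2\ell_2^2$, $q^2$, and conclude that ``$P_0$ is a constant because $\deg\widetilde{F}_4=4$''. Lemma \ref{possemi} does not say this: $P_0$ may be a positive definite \emph{quadratic} form, with $\widetilde{F}_4=\ell^2$ of degree $2$. For example, $F(x,y)=x^2(x^2+y^2+y+1)$ has $F_4=x^2(x^2+y^2)$, so $P_0=x^2+y^2$, $\widetilde{F}_4=x^2$, and $\gcd(\widetilde{F}_4,F_3,F_2,F_1)=x^2$ has degree $2$; this $F$ satisfies every hypothesis of Lemma \ref{lemspcase} but fits none of your branches: here $H=\ell^2$ and $G_2=P_0$ is positive definite, so $M$ is neither $P_0(\ell')^2$ with constant $P_0$ nor divisible by $H$. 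Moreover both devices you deploy in your coprime branch fail on it: restricting to $\ell=k$ the inner factor is $k^2+w^2+w+1$, so the term you send to $-\infty$ is swamped by a positive $k^2$ (as it must be, since this $F$ is non-negative), and $F$ is not of the product shape $\ell^2\mathfrak{F}_0(\ell')$, so your divisor-type count does not apply either. Thus an entire case of the lemma is left unproved.

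The omission is repairable, and the paper's route shows how without any shape analysis of $G_2$: whenever $\gcd(H,G_2)=1$ (which covers both your $M=P_0(\ell')^2$ subcase and the missed one), Lemma \ref{binsep} gives $\max\{|H(x,y)|,|G_2(x,y)|\}\gg x^2+y^2$, hence $|F(x,y)|\gg x^2+y^2$ whenever $F(x,y)\neq 0$; then either $\inf_{\bZ^2}F=-\infty$ or Proposition \ref{2222} with $\mathcal{E}=\{(x,y):F(x,y)=0\}$ yields $\#\R_F([N,2N])\ll N^{1-\lambda}$, which suffices. For the remaining branches your path genuinely differs from the paper's: in the split case $H=\ell_1\ell_2$ you try to prove the full dichotomy by a line-by-line value count, whereas the paper exploits reducibility — if $F(x,y)=Cp$ then some $\ell_i(x,y)=d$ with $d\mid C$, and restricting to those finitely many lines gives $\#\bigl(F(\bZ^2)\cap C\wp\cap[1,N]\bigr)\ll\sqrt{N}$, already incompatible with $F(\bZ^2)=(C\bZ)_{\geq D}$ — and in the irreducible indefinite case the paper simply invokes Proposition \ref{lastprop} (via Dirichlet approximation) where you use a Pell-orbit argument. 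Those alternative branches look viable, but the degree-two $\widetilde{F}_4$ configuration must be added before your proof is complete.
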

\begin{proof}
Our assumptions imply that $F$ can be expressed in the form
\begin{equation} \label{gcd2} F(x,y) = H(x,y) G(x,y),
\end{equation}
where $H$ is a primitive integral quadratic form and $G$ is an integral quadratic polynomial.
Let $G_2$ denote the homogeneous degree $2$ part of $G$.

If $\gcd(H, G_2) = 1$, then we see from Lemma \ref{binsep} that
\[
|F(x,y)| \gg \max\{|H(x,y)|, |G(x,y)|\} =  \max\{|H(x,y)|, |G_2(x, y)| + O( \sqrt{x^2 + y^2} )\}  \gg x^2 + y^2
\]
for all $(x, y) \in \bZ^2$ such that $F(x,y) \neq 0$. Therefore, either
$$
\inf_{(x,y) \in \bZ^2} F(x,y) = -\infty
$$
holds or we are done by Proposition \ref{2222} with
$\mathcal{E} = \{ (x,y) \in \bZ^2:  F(x,y) = 0  \}$. Otherwise, we must have $\gcd(H, G_2) \ne 1$. Since $H$ is a factor of $\widetilde{F}_4$, either
$H$ is an indefinite irreducible quadratic form or $L_1 L_2$, where $L_1$ and $L_2$ are primitive
integral linear forms (possibly equal to each other).

First let us suppose $H$ is indefinite and irreducible. If the linear term of $G$ is non-zero, then Proposition \ref{lastprop} applies and we are done.
On the other hand, if the linear term of $G$ is zero, then the result follows easily from Lemma \ref{single}.
Next we suppose $H = L_1 L_2$.
If $F(x,y) = C p$ for some prime $p$, then there exist $i \in \{1,2\}$ and $d \in \bZ$ such that $d|C$ and $L_i(x,y) = d$. Without loss of generality let us say
$L_i(x,y) = ax + by$ and
$$
\{ (x,y) \in \bZ^2: L_i(x,y) = d \} = \{  ( x_0 - b z, y_0 + a z ): z \in \bZ  \}.
$$
We define
$$
F_{L_i, d}(z) = F ( x_0 - b z, y_0 + a z ).
$$
Suppose for some $i \in \{1, 2\}$ and $d|C$, we have that  $\deg F_{L_i, d} = 1$. Then
$$
\inf_{(x,y) \in \bZ^2} F(x,y) \leq  \inf_{ \substack{ (x,y) \in \bZ^2  \\ L_i(x,y) = d  } } F (x, y)
= \inf_{ z \in \bZ } F_{L_i, d}(z) = - \infty.
$$
Therefore, let us suppose $\deg F_{L_i, d} \neq 1$ for all $i \in \{1, 2\}$ and $d|C$.
In this case, by Lemma \ref{single} we obtain
$$
\# (F(\bZ^2) \cap C \wp \cap [1, N]) \leq  \sum_{i = 1, 2} \sum_{d |C} \# (F_{L_i, d}(\bZ) \cap [1, N]) \ll \sqrt{N},
$$
which completes our proof.
\end{proof}



\begin{lemma} \label{finlem} Suppose $F_4$ is positive semi-definite and  $\deg \gcd( \widetilde{F}_4, F_3, F_2, F_1) \in \{1, 3\}$. Let $C \in \bN$.
Then we have either
$$
\#  \mathcal{R}_F( C \wp  \cap  [1, N] )   \ll \sqrt{N} \log N
\quad
\textnormal{or}
\quad
\inf_{(x,y) \in \bZ^2} F(x,y) = - \infty.
$$
\end{lemma}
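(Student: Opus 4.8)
The plan is to exploit the factorization forced by the hypothesis. Since $P := \gcd(\widetilde{F}_4, F_3, F_2, F_1)$ is a form dividing every homogeneous part of $F$, it divides $F$; write $F = P\,G$ with $G \in \bZ[x,y]$ of degree $\deg G = 4 - \deg P \in \{3,1\}$. The basic mechanism is that if $F(x,y) = Cp$ for a prime $p$ with $p \nmid C$, then $p$ divides exactly one of the integers $P(x,y)$, $G(x,y)$ and the other divides $C$. Hence either the \emph{linear} one among $P, G$ equals some $d \mid C$, putting $(x,y)$ on one of boundedly many lines; or the \emph{cubic} one among $P, G$ equals some $e$ with $0 < |e| \le C$, putting $(x,y)$ on the plane cubic curve $\mathcal{C}_e$ defined by that cubic being equal to $e$. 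Only finitely many primes divide $C$, so throughout we may ignore them.

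First I would treat $\deg P = 3$. Then $F_2 = F_1 = 0$ (a degree-$3$ form cannot divide a nonzero polynomial of degree $\le 2$), and using Lemma \ref{possemi} for the positive semidefinite form $F_4$ one finds $P = H^3$ or $P = H_1^2 H_2$ with $H, H_1, H_2$ primitive linear forms; in both cases $F$ has an explicit factored shape. The mechanism above then confines $(x,y)$ to finitely many lines and points, and on each relevant line $F$ restricts to a one-variable polynomial of degree $\le 3$. If some such restriction has degree $\ge 2$ it contributes $O(\sqrt N)$ values (Lemma \ref{single} or the trivial bound); if one has degree $1$ we conclude $\inf_{(x,y)\in\bZ^2} F(x,y) = -\infty$; degree $0$ is negligible. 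Summing over the finitely many lines and points gives the claim in this case.

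For $\deg P = 1$ the ``line branch'' (where $P(x,y) = d \mid C$) is handled in exactly the same way, yielding $O(\sqrt N)$ or $\inf_{(x,y)\in\bZ^2} F(x,y) = -\infty$. The ``curve branch'' is the heart of the matter: on $\mathcal{C}_e = \{G = e\}$ we have $F = e\,P$, so a realized value $Cp$ forces $P(x,y) = (C/e)p$, a prime multiple. If $\mathcal{C}_e$ has positive genus, Siegel's theorem (Theorem \ref{Siegel}) makes $\mathcal{C}_e(\bZ)$ finite; if $\mathcal{C}_e$ is irreducible over $\bQ$ but not over $\overline{\bQ}$, Lemma \ref{easy} does the same; a genus-$0$ curve with only finitely many rational points is equally harmless; and if $\mathcal{C}_e$ splits over $\bQ$, each line component forces $F$ to restrict to a degree-$\le 1$ polynomial there (so $\inf F = -\infty$ or $O(1)$). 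For a conic component, or a geometrically irreducible genus-$0$ curve with infinitely many rational points, I would invoke Lemma \ref{birat} to rationally parametrize it; since $P$ is linear, $P$ becomes a rational function $A(t)/B(t)$ in lowest terms, and a realized prime $p$ (with $p \nmid e$) yields a rational root of $Cp\,B(t) - e\,A(t)$. By Lemma \ref{irred} the polynomial $w\,B(t) - e\,A(t)$ is irreducible over $\bQ$, so Hilbert's irreducibility theorem (Theorem \ref{Hilb}) forces $Cp\,B(t) - e\,A(t)$ to be irreducible over $\bQ$ — hence, when its degree in $t$ is at least $2$, to have no rational root — for all but $\ll \sqrt N\log N$ integers $Cp \in [1,N]$. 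Summing over the finitely many $e$ then gives $\# \mathcal{R}_F(C\wp \cap [1,N]) \ll \sqrt N\log N$.

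The main obstacle is precisely the degenerate possibility that $Cp\,B(t) - e\,A(t)$ has degree $\le 1$ in $t$ for every $w = Cp$, i.e.\ $P$ restricted to $\mathcal{C}_e$ is a M\"obius function of the parameter; this in fact always happens for a geometrically irreducible cubic $\mathcal{C}_e$. Indeed, because $F_4 = P\,G_3$ is positive semidefinite, the real zero of $P$ has even multiplicity in $F_4$, so $P^2 \mid F_4$; consequently the projective closure of $\mathcal{C}_e$ has a double point at the point at infinity determined by $P$, and both branches there are tangent to lines of the form $\{P = \mathrm{const}\}$, so $P$ remains bounded near that point. One then argues from the distribution of $\mathcal{C}_e(\bZ)$ among the finitely many points at infinity of $\mathcal{C}_e$: if the integer points accumulate only at the point coming from $P$, then $P$ — hence $F = e\,P$ — is bounded on $\mathcal{C}_e(\bZ)$, contributing $O(1)$ values; if infinitely many integer points approach another point at infinity, which must correspond to a real zero of $F_4/P^2$ not annihilating $P$, then moving along that branch drives $F = e\,P$ to $-\infty$, so $\inf_{(x,y)\in\bZ^2} F(x,y) = -\infty$. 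Assembling all the pieces — the two branches in each degree regime, the $O(1)$ contributions, the finitely many $e$, and the finitely many primes dividing $C$ — yields $\# \mathcal{R}_F(C\wp \cap [1,N]) \ll \sqrt N\log N$ unless $\inf_{(x,y)\in\bZ^2} F(x,y) = -\infty$.
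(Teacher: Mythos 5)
Your skeleton is the paper's: factor $F=PG$ with $P$ the linear gcd (the degree-$3$ gcd case reduces to lines and points as you say), split a value $Cp$ according to which factor carries $p$, treat the line branch by one-variable restrictions, and treat the curve branch $\{G=e\}$ by Siegel's theorem, Lemma \ref{easy}, and a genus-zero parametrization combined with Lemma \ref{irred} and Theorem \ref{Hilb}. The divergence is exactly at the point you call the main obstacle: the degenerate case in which the pullback of $P$ along the parametrization has degree $\leq 1$ in $t$, so Hilbert irreducibility gives nothing. The paper closes this by inverting the M\"obius map and testing integrality of the \emph{other} coordinate: substituting $t=(s-vx)/(ux-r)$ into $R_2/Q_2$ one gets $\widetilde R_2(x)/\widetilde Q_2(x)$, and either $\widetilde Q_2(x)$ must divide the fixed resultant (finitely many points) or one shows the admissible $x$ run through arbitrarily large values of both signs, forcing $\inf F=-\infty$. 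Your substitute for this step is a geometric claim that is not correct.

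Concretely, $P^2\mid F_4$ only forces the point at infinity in the $P$-direction to lie on $\overline{\mathcal{C}_e}$, not to be a double point: for $F=y\bigl(y(x^2+y^2)+x^2+x+1\bigr)$ the hypotheses of the lemma hold, yet that point is smooth and $P=y$ restricted to the cubic has degree $2$ (so the assertion that the M\"obius degeneracy ``always happens'' for an irreducible cubic is false; this part is harmless, since HIT covers it). In the genuine degenerate case the double point \emph{is} forced, but normalizing $P=y$ it forces $F_4=c\,y^4$ and $y\mid G_2$, and the quadratic part of the local equation of $\overline{\mathcal{C}_e}$ at $[1:0:0]$ is then $z(\alpha y+\gamma z)$: the tangent cone always contains the line at infinity $z=0$. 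Hence (generically a node with these two real tangents) one real branch is tangent to the line at infinity, and along it $P\to\pm\infty$; your claim that both branches are tangent to lines $\{P=\mathrm{const}\}$, so that $P$ stays bounded near that point, fails. Moreover, since $G_3=c\,P^3$ there, the $P$-point is the \emph{only} point at infinity, so your second alternative (integer points accumulating at a different point at infinity) is vacuous and cannot rescue the dichotomy; and even where it applies (e.g.\ conic components), the conclusion that $F=eP$ is driven to $-\infty$ rather than $+\infty$ depends on which end of the branch carries the integer points and is not justified. As a result, the configuration of infinitely many integer points marching off along the branch tangent to the line at infinity with $eP\to+\infty$ is covered by neither of your alternatives: you get neither $O(1)$ values nor $\inf F=-\infty$. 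Closing this requires an integrality argument on the second coordinate as in the paper (the resultant step after inverting the M\"obius map), or some other input; as written, the degenerate case is not proved.
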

\begin{proof}
By applying a $\GL_2(\bZ)$-change of variables if necessary, we may rewrite $F$ as
$$
F(x,y) = (a x + b) K(x,y),
$$
where $a \in \bZ_{\neq 0}$, $b \in \bZ$ and $K$ is a degree $3$ primitive integral polynomial.
If $F(x, y) = C p$ for some prime $p$, then there exists $(d_1, d_2) \in \bZ^2$ satisfying $C = d_1 d_2$ such that $a x + b = d_2$ and $K(x, y) = d_1 p$ or vice versa.
Let
$$
Z_{d_1, d_2}^{(1)}(N) = \# \{ p \in \wp \cap [1, N/C]: ax + b = d_2,  K(x, y) = d_1 p, (x,y) \in \bZ^2   \}
$$
and
$$
Z_{d_1, d_2}^{(2)}(N) = \#  \{  p \in \wp \cap [1, N/C]: ax + b = d_1 p,  K(x, y) = d_2, (x,y) \in \bZ^2  \}.
$$
We will show that given any $i \in \{1, 2\}$ and $(d_1, d_2) \in \bZ^2$ such that $C = d_1 d_2$, we have either
$$
Z_{d_1, d_2}^{(i)}(N)  \ll \sqrt{N} \log N
\quad
\textnormal{or}
\quad
\inf_{(x,y) \in \bZ^2} F(x,y) = - \infty.
$$
As a consequence, it follows that either
$$
\# \mathcal{R}_F(  C \wp \cap [1, N]) \ll \sqrt{N} \log N
\quad
\textnormal{or}
\quad
\inf_{(x,y) \in \bZ^2} F(x,y) = - \infty
$$
holds, as desired.

Let us fix $d_1$ and $d_2$ as above.
First we deal with $Z_{d_1, d_2}^{(1)}(N)$.
Let $K'(y) = K((d_2 - b)/a , y)$. Since $Z_{d_1, d_2}^{(1)}(N) = 0$ if $(d_2 - b)/a \notin \bZ$, we assume $(d_2 - b)/a \in \bZ$.
If $\deg K'$ is odd, then we obtain  
$$
\inf_{ (x, y) \in \bZ^2}  F(x, y)  \leq   \inf_{  y \in \bZ}  d_2 K'(y) =  - \infty.
$$
On the other hand, if $\deg K'$ is even, then
$$
Z_{d_1, d_2}^{(1)}(N) \ll \# (\{  K'(y):  y \in \bZ   \} \cap [-N, N] ) \ll
\begin{cases}
\sqrt{N} & \mbox{if } \deg K' = 2, \\
1 & \mbox{if } \deg K' = 0.
\end{cases}
$$

Next we consider $Z_{d_1, d_2}^{(2)}(N)$.
Let us define
$$
P(x,y) = K(x,y) - d_2  
$$
and $\mathcal{C} = \{ (x,y) \in \bA^2: P(x,y) = 0  \}$.
We have four different cases to deal with:
\begin{enumerate}
\item $P$ is irreducible over $\bQ$, but reducible over $\overline{\bQ}$.
\item $P$ is irreducible over $\overline{\bQ}$ and $g(\mathcal{C}) > 0$.
\item $P$ is irreducible over $\overline{\bQ}$ and $g(\mathcal{C}) = 0$.
\item $P$ is reducible over $\bQ$.
\end{enumerate}

\subsection{$P$ is irreducible over $\bQ$, but reducible over $\overline{\bQ}$.}
\label{subsec0} In this case, we obtain by Lemma \ref{easy} that
$$
Z_{d_1, d_2}^{(2)}(N)  \leq \# \mathcal{C}(\bZ)  \ll 1.
$$

\subsection{$P$ is irreducible over $\overline{\bQ}$  and $g(\mathcal{C}) > 0$.}
\label{subsec1} In this case, by Siegel's theorem (Theorem \ref{Siegel}) we know that
$\mathcal{C}(\bZ)$ is finite. Therefore, it follows that
$$
Z_{d_1, d_2}^{(2)}(N) \leq \# \mathcal{C}(\bZ) \ll 1.
$$
\subsection{$P$ is irreducible over $\overline{\bQ}$  and $g(\mathcal{C}) = 0$.}
\label{subsec2}
If $\mathcal{C}(\bQ)$ is finite, then we are done. Otherwise, by Lemma \ref{birat} we have that $\mathcal{C}$ is birational to $\bA^1$.
Therefore, there exist $R_1, R_2, Q_1, Q_2 \in \mathbb{Z}[t]$ such that the two sets
$$
\mathcal{C}(\bQ)
\quad
\textnormal{and}
\quad
\left\{ \left( \frac{R_1(t)}{Q_1(t)}, \frac{R_2(t)}{Q_2(t)} \right) :  t \in \bQ, Q_1(t), Q_2(t) \neq 0   \right\}
$$
are equal up to finitely many elements. Without loss of generality we may assume that $\gcd(R_1, Q_1) = \gcd(R_2, Q_2)= 1$.
Let $\pi$ be the projection onto the first coordinate.
Clearly, if $x \in \pi(\mathcal{C}(\bZ))$ (except for at most finitely many choices of $x$), then there exists $t_0 \in \bQ$
such that
$$
x Q_1(t_0) -  R_1(t_0) = 0.
$$
We now obtain a bound for
$$
E(B) = \# \{ x \in \bZ \cap [-B, B]: x Q_1(t_0) - R_1(t_0) = 0 \textnormal{ for some } t_0 \in \bQ  \}.
$$
It is clear that for all but finitely many choices of $x_0 \in \bZ$, the degree of
$x_0 Q_1(t) - R_1(t)$ is $\max\{ \deg Q_1, \deg R_1 \}$.
Suppose $\max\{ \deg Q_1, \deg R_1 \} >  1$. Then the existence of $t_0 \in \bQ$ such that $x_0 Q_1(t_0) - R_1(t_0) = 0$ implies
$x_0 Q_1(t) - R_1(t)$ is reducible over $\bQ$. Therefore, we have
$$
E(B) \leq \# \{ x \in \bZ \cap [-B, B]:  x Q_1(t) - R_1(t) \textnormal{ is reducible over } \bQ   \} + O(1)
$$
in this case. We know by Lemma \ref{irred} that the polynomial $x Q_1(t) - R_1(t) \in \bQ[x, t]$ is irreducible over $\bQ$.
Therefore, by Hilbert's irreducibility theorem (Theorem \ref{Hilb}) it follows that
$$
\# ( \pi( \mathcal{C}(\bZ)) \cap [-B, B] )  \leq  E(B) + O(1) \ll \sqrt{B} \log B.
$$
Consequently, we obtain
\begin{eqnarray}
\notag
Z_{d_1, d_2}^{(2)}(N) &\leq&
\# \{ x \in \bZ \cap [-2N, 2N]: ax + b = d_1 p, K(x,y) = d_2, p \in \wp,  y \in \bZ \}
\\
\notag
&\leq&
\# \{ x \in \bZ \cap [-2N, 2N]: P(x,y) = 0,   y \in \bZ \}
\\
\notag
&\leq&
\# (  \pi( \mathcal{C}(\bZ))  \cap [-2N, 2N] )
\\
\notag
&\ll& \sqrt{N} \log N.
\end{eqnarray}

Let us now suppose $\max\{ \deg Q_1, \deg R_1 \} \in \{0, 1\}$. Let
$$
\frac{R_1(t)}{Q_1(t)} = \frac{ r t + s  }{ u t + v }.
$$
It is clear that if $x = R_1(t)/Q_1(t)$, then
\begin{equation}
\label{**}
t = \frac{ s -  v  x }{ u x - r }.
\end{equation}
Since $\gcd(R_1, Q_1) = 1$, it follows that $(s -  v  x)/(u x - r)$ is a non-constant rational function.
If $r = u = 0$, then it is immediate that
$$
Z_{d_1, d_2}^{(2)}(N) \ll 1.
$$
Therefore, we assume at least one of $r$ or $u$ is non-zero.
Let us denote
$$
\widetilde{R}_2(x) = (u  x - r)^m  R_2 \left(  \frac{ s - v   x }{  u   x - r } \right)
\quad
\textnormal{and}
\quad
\widetilde{Q}_2(x) = (u  x - r)^m  Q_2 \left(  \frac{ s - v    x }{ u   x - r } \right),
$$
where $m = \max\{  \deg Q_2,  \deg R_2 \}$. In particular, we have $\gcd(\widetilde{R}_2, \widetilde{Q}_2) = 1$ and
the identity
\begin{equation}
\label{res}
A(x) \widetilde{R}_2(x) + B(x)  \widetilde{Q}_2(x) = \textnormal{Res}(\widetilde{R}_2, \widetilde{Q}_2),
\end{equation}
where $A, B \in \bZ[x]$ and $\textnormal{Res}(\widetilde{R}_2, \widetilde{Q}_2) \in \bZ_{\neq 0}$ is the resultant of $\widetilde{R}_2$ and $\widetilde{Q}_2$.
If $\deg \widetilde{Q}_2 \geq 1$, then it follows that
\begin{eqnarray}
\notag
Z_{d_1, d_2}^{(2)}(N) &\leq&
\# (  \pi( \mathcal{C}(\bZ))  \cap [- 2N, 2N] )
\\
\notag
&\leq&
\# \{ x \in \bZ \cap [- 2N, 2N]  :   R_2(t)/Q_2(t) \in \bZ, t = (s - v  x )/( u x - r) \} + O(1)
\\
\notag
&\leq&
\# \{ x \in \bZ \cap [- 2N, 2N]  :   \widetilde{R}_2(x)/ \widetilde{Q}_2(x) \in \bZ  \} + O(1)
\\
\notag
&\leq&
\# \{ x \in \bZ \cap [- 2N, 2N]  :   \widetilde{Q}_2(x) | \textnormal{Res}(\widetilde{R}_2, \widetilde{Q}_2)   \} + O(1)
\\
\notag
&\ll& 1.
\end{eqnarray}

It is a basic exercise to check that $\deg \widetilde{Q}_2 = 0$ if and only if $\deg R_2 = \deg Q_2 = 0$.
Let us denote $R_2(t) = R_2$ and $Q_2(t) = Q_2$.
It is clear that if $R_2/Q_2 \notin \bZ$, then
$$
Z_{d_1, d_2}^{(2)}(N) \ll 1.
$$
On the other hand, if $R_2/Q_2 \in \bZ$, then
the two sets
$$
\pi(\mathcal{C}(\bZ))
\quad
\textnormal{and}
\quad
\{ x \in \bZ: x = (r t + s) / (u t + v), t \in \bQ, ut+ v \neq 0 \}
$$
are equal up to finitely many elements.
It is easy to see from (\ref{**}) that the latter set contains $x$, positive or negative, with arbitrarily large $|x|$,
which implies
\begin{eqnarray}
\notag
\inf_{(x, y) \in \bZ^2} F(x,y) =   \inf_{ (x, y) \in \bZ^2} (a x + b) K(x,y) \leq  \inf_{ \substack{ (x, y) \in \bZ^2 \\  K(x, y) = d_2 } } (a x + b) d_2  = - \infty.
\end{eqnarray}

\subsection{$P$ is reducible over $\bQ$}
Suppose $P$ factors over $\bQ$, which we denote by $P(x,y) = (h x + k y + \ell) G(x,y)$.
Then $K(x, y) = d_2$ if and only if $h x + k y + \ell = 0$ or $G(x,y) = 0$.
In the former case, we consider the set
$$
\{ x \in \bZ:  h x + k y + \ell = 0, y \in \bZ \},
$$
which is either empty or infinite.
If the set is empty, then it does not contribute to $Z_{d_1, d_2}^{(2)}(N)$.
On the other hand, if the set is infinite, then it contains $x$, positive or negative, with arbitrarily large $|x|$,
which implies
\begin{eqnarray}
\label{lastcase}
\inf_{(x, y) \in \bZ^2} F(x,y) =   \inf_{ (x, y) \in \bZ^2} (a x + b) K(x,y) \leq  \inf_{ \substack{ (x, y) \in \bZ^2 \\  K(x, y) = d_2 } } (a x + b) d_2  = - \infty.
\end{eqnarray}

Now we consider the latter case $G(x,y) = 0$. Let $\mathcal{C}' = \{ (x,y) \in \bA^2: G(x,y) = 0 \}$. As above, we have four different cases to deal with:
\begin{enumerate}
\item $G$ is irreducible over $\bQ$, but reducible over $\overline{\bQ}$.
\item $G$ is irreducible over ${\overline{\bQ}}$ and $g(\mathcal{C}') > 0$.
\item $G$ is irreducible over ${\overline{\bQ}}$ and $g(\mathcal{C}') = 0$.
\item $G$ is reducible over $\bQ$.
\end{enumerate}
The first three cases can be dealt with in the same way as in Sections \ref{subsec0}, \ref{subsec1} and \ref{subsec2}.
As a result, we obtain that either
$$
Z_{d_1, d_2}^{(2)}(N)  \ll \sqrt{N} \log N
\quad
\textnormal{or}
\quad
\inf_{(x,y) \in \bZ^2} F(x,y) = - \infty
$$
holds. Since the arguments are similar, we omit the details. Finally, if $G$ is reducible over $\bQ$, then
there are two linear polynomials (possibly a constant multiple of the other) $L_1$ and $L_2$ such that $G(x,y) = L_1(x,y) L_2(x,y)$.
Therefore, if $G(x,y) = 0$, then $L_1(x,y) = 0$ or $L_2(x,y) = 0$. In either case, we may argue as above to conclude
that either it does not contribute to $Z_{d_1, d_2}^{(2)}(N)$ or (\ref{lastcase}) holds.
\end{proof}

\end{document}